\def\input@path{{styles/}}
\newcommand{\UsePackage}[1]{%
  \IfFileExists{styles/#1.sty}{%
      \usepackage{styles/#1}%
   }{%
      \IfFileExists{../styles/#1.sty}{%
         \usepackage{../styles/#1}%
      }{%
         \usepackage{#1}%
      }%
   }%
}
\theoremstyle{plain}%
\newtheorem{theorem}{Theorem}[section]
\newtheorem{lemma}[theorem]{Lemma}
\newtheorem{corollary}[theorem]{Corollary}
\newtheorem{proposition}[theorem]{Proposition}
\newtheorem{assumption}[theorem]{Assumption}%
\theoremstyle{plain}%
\newtheorem*{remark:unnumbered}[theorem]{Remark}%
\newtheorem{remark}[theorem]{Remark}%
\newtheorem{definition}[theorem]{Definition}
\newcommand{\myqedsymbol}{\rule{2mm}{2mm}}
\theoremstyle{nonumberplain}%
\newtheorem{proof}{Proof:}%
\providecommand{\emphind}[1]{}%
\renewcommand{\emphind}[1]{\emph{#1}\index{#1}}
\definecolor{blue25emph}{rgb}{0, 0, 11}
\providecommand{\emphic}[2]{}
\renewcommand{\emphic}[2]{\textcolor{blue25emph}{%
      \textbf{\emph{#1}}}\index{#2}}
\providecommand{\emphi}[1]{}%
\renewcommand{\emphi}[1]{\emphic{#1}{#1}}
\definecolor{almostblack}{rgb}{0, 0, 0.3}
\providecommand{\emphw}[1]{}%
\renewcommand{\emphw}[1]{{\textcolor{almostblack}{\emph{#1}}}}%
\providecommand{\emphOnly}[1]{}%
\renewcommand{\emphOnly}[1]{\emph{\textcolor{blue25}{\textbf{#1}}}}
\newcommand{\HLink}[2]{\hyperref[#2]{#1~\ref*{#2}}}
\newcommand{\HLinkSuffix}[3]{\hyperref[#2]{#1\ref*{#2}{#3}}}
\providecommand{\deflab}[1]{}
\renewcommand{\deflab}[1]{\label{def:#1}}
\providecommand{\eqlab}[1]{}%
\renewcommand{\eqlab}[1]{\label{equation:#1}}
\newcommand{\remove}[1]{}%
\DeclareMathOperator*{\argmax}{arg\,max}
\DeclareMathOperator*{\argmin}{arg\,min}
\newlist{compactenumA}{enumerate}{5}%
\setlist[compactenumA]{topsep=0pt,itemsep=-1ex,partopsep=1ex,parsep=1ex,%
   label=(\Alph*)}%
\newlist{compactenuma}{enumerate}{5}%
\setlist[compactenuma]{topsep=0pt,itemsep=-1ex,partopsep=1ex,parsep=1ex,%
   label=(\alph*)}%
\newlist{compactenumI}{enumerate}{5}%
\setlist[compactenumI]{topsep=0pt,itemsep=-1ex,partopsep=1ex,parsep=1ex,%
   label=(\Roman*)}%
\newlist{compactenumi}{enumerate}{5}%
\setlist[compactenumi]{topsep=0pt,itemsep=-1ex,partopsep=1ex,parsep=1ex,%
   label=(\roman*)}%
\newlist{compactitem}{itemize}{5}%
\setlist[compactitem]{topsep=0pt,itemsep=-1ex,partopsep=1ex,parsep=1ex,%
   label=\ensuremath{\bullet}}%
\numberwithin{figure}{section}%
\numberwithin{table}{section}%
\numberwithin{equation}{section}%
\newcommand{\va}{{\mathbf{a}}}
\newcommand{\vs}{{\mathbf{s}}}
\newcommand{\vg}{{\mathbf{g}}}
\newcommand{\vu}{{\mathbf{u}}}
\newcommand{\vv}{{\mathbf{v}}}
\newcommand{\bv}{{\mathbf{v}}}
\newcommand{\vw}{{\mathbf{w}}}
\newcommand{\vx}{{\mathbf{x}}}
\newcommand{\bx}{{\mathbf{x}}}
\newcommand{\vy}{{\mathbf{y}}}
\newcommand{\by}{{\mathbf{y}}}
\newcommand{\bz}{{\mathbf{z}}}
\newcommand{\bq}{{\mathbf{q}}}
\newcommand{\bp}{{\mathbf{p}}}
\newcommand{\bQ}{{\mathbf{Q}}}
\newcommand{\bP}{{\mathbf{P}}}
\newcommand{\bG}{{\mathbf{G}}}
\newcommand{\vG}{{\mathbf{G}}}
\newcommand{\bI}{{\mathbf{I}}}
\newcommand{\vDelta}{{\boldsymbol{\Delta}}}
\newcommand{\vlambda}{{\boldsymbol{\lambda}}}
\newcommand{\vdelta}{{\boldsymbol{\delta}}}
\newcommand{\vxi}{{\boldsymbol{\xi}}}
\newcommand{\cX}{{\mathcal{X}}}
\newcommand{\cY}{{\mathcal{Y}}}
\begin{document}

\title{Deterministic and Stochastic Accelerated Gradient Method for Convex Semi-Infinite Optimization
\thanks{This work is jointly supported by the University of Iowa
Jumpstarting Tomorrow Program and NSF award 2147253.}
}

\author{Yao Yao\thanks{University of Iowa, Iowa City, IA (yao-yao-2@uiowa.edu, qihang-lin@uiowa.edu).}
	\and Qihang Lin\footnotemark[2]
	\and Tianbao Yang\thanks{Texa A\&M University, College Station, TX (tianbao-yang@tamu.edu).}}

\date{}

\maketitle

\begin{abstract}
   This paper explores numerical methods for solving a convex differentiable semi-infinite program. We introduce a primal-dual gradient method which performs three updates iteratively: 
a momentum gradient ascend step to update the constraint parameters, a momentum gradient ascend step to update the dual variables, and a gradient descend step to update the primal variables. Our approach also extends to scenarios where gradients and function values are accessible solely through stochastic oracles. This method extends the recent primal-dual methods, for example, \cite{hamedani2021primal,boob2022stochastic}, for optimization with a finite number of constraints. We show the iteration complexity of the proposed method for finding an $\epsilon$-optimal solution under different convexity and concavity
assumptions on the functions.  
\end{abstract}


\section{Introduction}
\label{intro}
We consider the following constrained optimization problem
\begin{eqnarray}
\label{eq:SIP}
f^*&:=&\min_{\bx\in\cX} f(\bx)\quad\text{s.t.}\quad g_i(\bx,\by^i)\leq 0,~\forall \by^i\in \cY^i,~i=1,\dots,m.
\end{eqnarray}
Here, $f$ and $g_i$, $i=1,\dots,m$ are real-valued continuous functions, and $\cX\subset\mathbb{R}^p$ and $\cY^i\subset\mathbb{R}^q$, $i=1,\dots,m$ are convex closed sets. Since $\cY^i$ is not a finite set,  the number of constraints in~\eqref{eq:SIP} is infinite, so \eqref{eq:SIP} is a \emph{semi-infinite program} (SIP)~\cite[and references therein]{reemtsen1998semi,goberna2002linear,lopez2007semi,goberna2013semi}. 
SIP has been studied systematically since the 1970s and has a number of applications including robotics~\cite{marin1988optimal,hettich1991semi,vaz2004robot},
statistics~\cite{dall2001some}, machine learning~\cite{ozogur2009modelling,ozougur2010numerical,sonnenburg2006large}, stochastic programming~\cite{dentcheva2004semi}, robust optimization~\cite{mehrotra2014cutting,liu2015semi,sun2017quadratic,luo2017decomposition}, Markov decision process~\cite{de_farias_linear_2003,lin2020revisiting,pakiman2020self}, inventory control \cite{adelman_price-directed_2004,adelman2012computing,adelman2013unifying}, revenue management \cite{adelman_dynamic_2007}, queuing \cite{VFarias_VanRoy_2007, zhang2009approximate, Adelman_2013}, and health care \cite[ch. 4]{patrick2008dynamic,restrepo2008}. 

In this paper, we study the first-order method for finding an  \emph{$\epsilon$-optimal} solution for \eqref{eq:SIP}, which is a solution $\bar\bx\in\mathcal{X}$ satisfying
\begin{eqnarray}
\label{eq:epsilonopt_def}
f(\bar\bx)-f^*\leq \epsilon\text{ and }g_i^*(\bar\bx)\leq \epsilon,~i=1,\dots,m,
\end{eqnarray}
where
\begin{eqnarray}
\label{eq:gistar}
g_i^*(\bx):=\max_{\by^i\in\mathcal{Y}^i}g_i(\bx,\by^i),~i=1,\dots,m.
\end{eqnarray}
We mainly focus on the convex case although some extension to non-convex problems arising from machine learning applications will be discussed later. Hence, the following assumptions are made for problem \eqref{eq:SIP}.
\begin{assumption}[Convexity and compactness]
\label{assume:basic}
The following statements hold.
\begin{enumerate}
\item[a.] $\cX\subset\mathbb{R}^p$ and $\cY^i\subset\mathbb{R}^q$, $i=1,\dots,m$ are convex and closed. 
\item[b.] There exists $D_y\in\mathbb{R}$ such that $\|\by^i-\by^{i\prime}\|\leq D_y$ for any $\by^i,\by^{i\prime}\in\cY^i$ for $i=1,\dots,m$. 
\item[c.] $f$ is $\mu_f$-strongly convex in $\bx$ for $\mu_f\geq0$. 
\item[d.] $g_i$ is convex in $\bx$ for $i=1,\dots,m$. 
\item[e.] $g_i$ is $\mu_y$-strongly concave in $\by^i$ for $i=1,\dots,m$ and $\mu_y\geq0$.
\end{enumerate}
\end{assumption}
Note that $\mu_f$ and $\mu_y$ can be zero. As shown in this paper, the theoretical complexity of the studied method depends on whether $\mu_f$ or $\mu_y$ or both are positive. We can also assume $g_i$ is $\mu_x$-strongly convex in $\bx$ for $i=1,\dots,m$ with $\mu_x\geq0$. However, whether $\mu_x$ is positive or not does not affect the order of iteration complexity. This is consistent with the findings when there are finitely many constraints \cite{hamedani2021primal,boob2022stochastic}. Hence, we do not introduce $\mu_x$ for simplicity. Here, we assume $\cY^i$ has the same dimension for $i=1,\dots,m$ only to simplify the notation. In fact, all the results in this paper still hold when each $\cY^i$ has its own dimension. 

Let $\vlambda=(\lambda_1,\dots,\lambda_m)^\top\in\mathbb{R}_+^m$ be the Lagrangian multipliers of the constraints in \eqref{eq:SIP}. We assume that there exist Lagrangian multipliers that satisfy the optimality conditions together with the optimal solution of \eqref{eq:SIP}. 
\begin{assumption}[Existence of  optimal multipliers]
\label{assume:KKT}
For any optimal solution $\vx^*$  of \eqref{eq:SIP}, there exists $\vlambda^*\in\mathbb{R}_+^m$ such that
\begin{eqnarray*}
\vx^*\in\argmin_{\vx\in\cX}\left[f(\bx)+(\vlambda^*)^\top \vg^*(\bx)\right]
\text{ and }
\lambda_i^*g_i^*(\bx^*)=0, \quad i=1,\dots,m,
 \end{eqnarray*}
where 
\begin{eqnarray}
\label{eq:gvecstar}
\vg^*(\bx)=(g_1^*(\bx),\dots,g_m^*(\bx))^\top.
\end{eqnarray}
\end{assumption}

In addition, we assume $f$ and $g_i$, $i=1,\dots,m$, are real-valued and continuous. We consider both smooth and non-smooth cases. In the smooth case, let $\nabla f$ be the gradient of $f$ and let  $\nabla_x g_i$ and $\nabla_y g_i$ be the gradient of $g$ with respect to $\bx$ and $\by^i$, respectively. In the non-smooth case,  let $\partial f$ be the subdifferential of $f$ and let  $\partial_x g_i$ and $\partial_y g_i$ be the subdifferential of $g$ with respect to $\bx$ and $\by^i$, respectively. With a little abuse of notation, in the non-smooth case, we still use $\nabla f$, $\nabla_x g_i$ and $\nabla_y g_i$ to denote subgradients in  $\partial f$, $\partial_x g_i$ and $\partial_y g_i$, respectively. Let $\|\cdot\|$ denote the Euclidean norm throughout the paper. The following assumptions are also made. 
\begin{assumption}[Smoothness and Lipschitz continuity]
\label{assume:continuity}
There exist non-negative constants $L_f$, $L_g^{xx}$, $L_g^{yx}$, $L_g^{yy}$, $H_f$,  $H_g^x$, $H_g^y$, $M_g^{x}$ and $M_g^{y}$ such that the following statements hold. 
\begin{enumerate}
\item[a.] $f(\bx)\leq f(\bx')+\nabla f(\bx')^\top(\bx-\bx')+\frac{L_f}{2}\|\bx-\bx'\|^2+H_f\|\bx-\bx'\|$ for any $\bx,\bx'\in \cX$.
\item[b.] 
$g_i(\vx,\vy^i)-g_i(\vx',\vy^i)- \nabla_x g_i(\vx',\vy^i)^\top (\vx-\vx')\leq\frac{L_g^{xx}}{2} \|\vx-\vx'\|^2+ H_g^{x} \|\vx-\vx'\|$  for any $\bx,\bx'\in \cX$ and any $\by^i\in\cY^i$.
\item[c.]  $\|\nabla_y g_i(\bx,\by^i)-\nabla_y g_i(\bx',\by^{i\prime})\|\leq L_g^{yx} \|\bx-\bx'\|+L_g^{yy} \|\by^i-\by^{i\prime}\|+H_g^y$ for any $\bx,\bx'\in \cX$ and any $\by^i,\by^{i\prime}\in\cY^i$.
\item[d.]  $|g_i(\bx,\by^i)- g_i(\bx',\by^{i\prime})|\leq M_g^{x} \|\bx-\bx'\|+M_g^{y} \|\by^i-\by^{i\prime}\|$ for any $\bx,\bx'\in \cX$ and any $\by^i,\by^{i\prime}\in\cY^i$.
\end{enumerate}
\end{assumption}
This assumption encompasses both smooth and non-smooth cases. In fact, the smooth case occurs when $H_f=H_g^x=H_g^y=0$. In this case, Assumptions~\ref{assume:continuity}a, b and c require $\nabla f$, $\nabla_x g_i$ and $\nabla_y g_i$ are unique at each point and satisfy some Lipschitz properties. In fact, the non-smooth case occurs when $H_f$, $H_g^x$ and $H_g^y$ are non-zero while $L_f=L_g^{xx}=L_g^{yx}=L_g^{yy}=0$. In this case, Assumptions~\ref{assume:continuity}a, b and c imply that $\nabla f$, $\nabla_x g_i$ and $\nabla_y g_i$ are bounded.  Assumption~\ref{assume:continuity} d means $g_i$ is Lipschitz continuous, which implies
\begin{equation}
\label{eq:Lipgi}
\|\nabla_x g_i(\vx,\vy^i)\|\leq  M_g^{x}\text{ and } \|\nabla_y g_i(\vx,\vy^i)\|\leq  M_g^{y}
\end{equation}
for any $\bx\in \cX$ and any $\by^i,\by^{i\prime}\in\cY^i$.

We consider both the case where the gradients and function values above can be computed exactly and the case where they can only be approximated by stochastic oracles. In the latter case, we call \eqref{eq:SIP} a \emph{stochastic semi-infinite program} (SSIP), for which the following standard assumptions are made. 
\begin{assumption}[Stochastic oracles]
\label{assume:oracle}
There exist a random vector $\xi$ and measurable mappings $\nabla F(\vx,\xi)\in \mathbb{R}^p$, $G_i(\vx,\vy^i,\xi)\in\mathbb{R}$, $\nabla_x G_i(\vx,\vy^i,\xi)\in\mathbb{R}^p$, and $\nabla_y G_i(\vx,\vy^i,\xi)\in\mathbb{R}^q$ that provide stochastic oracles for $\nabla f(\vx)$, $ g_i(\vx,\vy^i)$, $\nabla_x g_i(\vx,\vy^i)$, and $\nabla_y g_i(\vx,\vy^i)$, respectively. Moreover, the following conditions hold.

Unbiasedness:
\begin{eqnarray}
    \label{eq:mean_gf}
    \mathbb{E}[\nabla F(\vx,\xi)]&=&\nabla f(\vx),\\\label{eq:mean_Gi}
    \mathbb{E}[G_i(\vx,\vy^i,\xi)]& =& g_i(\vx,\vy^i),\quad \quad i=1,\cdots,m\\\label{eq:mean_Ggx}
    \mathbb{E}[\nabla_x G_i(\vx,\vy^i,\xi)]& =& \nabla_x g_i(\vx,\vy^i),\quad \quad i=1,\cdots,m\\\label{eq:mean_Ggy}
    \mathbb{E}[\nabla_y G_i(\vx,\vy^i,\xi)]& =& \nabla_y g_i(\vx,\vy^i),\quad \quad i=1,\cdots,m.
\end{eqnarray}
Bounded variance:
\begin{eqnarray}
    \label{eq:var_gf}
    \mathbb{E}[\|\nabla F(\vx,\xi)-\nabla f(\vx)\|^2]&\leq& \sigma_{f'}^2,\\\label{eq:var_Gi}
    \mathbb{E}[\|G_i(\vx,\vy^i,\xi)-g_i(\vx,\vy^i)\|^2]& \leq& \sigma_{g}^2,\quad \quad i=1,\cdots,m\\\label{eq:var_Ggx}
    \mathbb{E}[\|\nabla_x G_i(\vx,\vy^i,\xi)-\nabla_x g_i(\vx,\vy^i)\|^2]& \leq& \sigma_{g'}^2,\quad \quad i=1,\cdots,m\\\label{eq:var_Ggy}
    \mathbb{E}[\|\nabla_y G_i(\vx,\vy^i,\xi)-\nabla_y g_i(\vx,\vy^i)\|^2]& \leq& \sigma_{g'}^2,\quad \quad i=1,\cdots,m,
\end{eqnarray}
where $ \sigma_{f'}$, $\sigma_{g}$ and $\sigma_{g'}$ are non-negative constants. 
\end{assumption}

Let $\cY:=\cY^1\times\cY^2\times\cdots\times\cY^m\subset\mathbb{R}^{mq}$ and 
$\by=(\by^{1\top},\dots,\by^{m\top})^\top\in \cY$. 
Consider the Lagrangian function of \eqref{eq:SIP}
\begin{equation}
\label{eq:Lagrangian}
    \phi(\bx,\vlambda,\by) = f(\bx)+\vlambda^\top \vg(\bx,\by).
\end{equation}
Given an optimal solution $\bx^*$ and the Lagrangian multipliers $\vlambda^*$ in Assumption~\ref{assume:KKT}, let $\by^*\in\argmax_{\vy\in\cY}\vg(\bx^*,\vy)$. It is known that $(\bx^*,\vlambda^*,\by^*)$ is a saddle-point of \eqref{eq:Lagrangian} in the sense that 
$$
\phi(\bx^*,\vlambda,\by)\leq \phi(\bx^*,\vlambda^*,\by^*)\leq \phi(\bx,\vlambda^*,\by^*)
$$
for any $(\bx,\vlambda,\by)\in\cX\times\mathbb{R}_+^m\times\cY$.
This motivates using a primal-dual gradient method for finding a saddle-point of $\phi(\bx,\vlambda,\by)$. However,  
Assumptions~\ref{assume:basic} only ensures the convexity of $\phi$ on $\bx$ but not the joint concavity of $\phi$ on $(\vlambda,\by)$, so one cannot directly apply the existing primal-dual methods for convex-concave min-max problems (e.g.,~\cite{nemirov09,nemirovski-2005-prox,hamedani2021primal,boob2022stochastic}) and their convergence analysis. 

That being said, \eqref{eq:SIP} is indeed tractable under the convexity and concavity assumptions from Assumptions~\ref{assume:basic}. In fact, thanks to the concavity of $g_i$ in $\by^i$, the lower-level problem $\max_{\by^i\in\cY^i}g_i(\bx,\by^i)$ for $i=1,\dots,m$ are relatively easy to solve. Suppose the optimal solution to each lower problem, denoted by $\by^i_*$ for $i=1,\dots,m$, is obtained. By Danskin's theorem, one can obtain $\nabla g_i^*(\bx)=\nabla_xg_i(\bx,\by^i_*)$ and then solve \eqref{eq:SIP} as a problem with finitely many constraints, $g_i^*(\bx)\leq 0$ for $i=1,\dots,m$, for example, using the methods in \cite{hamedani2021primal,boob2022stochastic}. When computing $\by^i_*$ exactly is not practical, one can first find an approximate solution to the lower-level problem,  denoted by $\widehat\by^i$ for $i=1,\dots,m$,  and then approximate $\nabla g_i^*(\bx)$ by $\nabla _xg_i(\bx,\widehat\by^i)$. This way, one can still solve \eqref{eq:SIP} as a finitely constrained problem using the inexact gradients of $g_i^*(\bx)$. However, this approach usually requires a small approximation error in terms of $\|\nabla g_i^*(\bx)-\nabla _xg_i(\bx,\widehat\by^i)\|$, which not only needs strong assumptions, for example,  strong concavity and smoothness of $g_i$ in $\by$, but also leads to high computational complexity for solving each lower-level problem. Moreover, this approach involves double loops, which is not easy to implement. 

To address these issues, we propose a first-order algorithm that only performs one gradient ascend step on $g_i$ to update $\by^i$ in each iteration, followed by primal-dual gradient steps to update $\vlambda$ and $\vx$ using the function values and gradients on the historical iterates. This approach involves only a single loop, making it easy to implement. Additionally, it does not assume strong convexity/concavity or smoothness and does not require solving the lower-level problems explicitly to any predetermined error.  Since $\vlambda$ and $\by$ are not updated jointly, our convergence analysis is unaffected by the non-concavity of $\phi$ in $(\vlambda,\by)$. Consequently, we theoretically analyze the number of iterations our method requires to find an $\epsilon$-optimal solution of \eqref{eq:SIP} under various settings, depending on the smoothness and convexity/concavity of the problem as well as the type of gradient oracles used (deterministic or stochastic). We summarize the iteration complexity of our method in Table~\ref{table:1}.

\begin{table}[t]
\centering
\begin{tabular}{|c |c |c | c| c| c| |c|} 
 \hline
Method & $\mu_f$ & $\mu_y$ & Gradients& Smoothness& Oracles for lower-level problem & Iters \\ [0.5ex] 
 \hline\hline
\multirow{7}{*}{Alg.~\ref{alg:PDHG}}  & $>0$ & $>0$ & \multirow{7}{*}{Deter. } &S& \multirow{7}{*}{$g_i$, $\nabla_x g_i$, $\nabla_y g_i$} & $O(\epsilon^{-0.5})$ \\ 
 & $>0$ & $>0$ &  &NS& & $O(\epsilon^{-1})$ \\ 
 & $>0$ & $=0$ &   &S or NS& & $O(\epsilon^{-1})$ \\
& $=0$ & $>0$ &   &S& & $\tilde O(\epsilon^{-1})$ \\
& $=0$ & $>0$ &   &NS& & $O(\epsilon^{-2})$ \\
 & $=0$ & $=0$ &   &S& & $O(\epsilon^{-1})$ \\ 
  & $=0$ & $=0$ &  &NS& & $O(\epsilon^{-2})$ \\ \hline
Alg.~\ref{alg:SPDHG} & $\geq 0$ & $\geq 0$ & Stoch. &S or NS& $G_i$, $\nabla_x G_i$, $\nabla_y G_i$& $O(\epsilon^{-2})$ \\ \hline
\multirow{2}{*}{\cite{Wei2020TheCA} }& $=0$ & NA & \multirow{2}{*}{Deter. }  &S or NS&\multirow{2}{*}{$g_i$, $\nabla_x g_i$, $\approx\argmax\limits_{\by^i\in\cY^i} g_i(\bx,\by^i)$}& $O(\epsilon^{-2})$ \\
& $>0$ & NA &  &S or NS& & $O(\epsilon^{-1})$ \\ \hline
\multirow{2}{*}{\cite{wei2020inexact,lin2020revisiting}} & \multirow{2}{*}{$=0$} & \multirow{2}{*}{NA} &Deter.  &\multirow{2}{*}{S or NS}& $g_i$, $\nabla_x g_i$ or $G_i$, $\nabla_x G_i$, & \multirow{2}{*}{$O(\epsilon^{-2})$ }\\ 
& &  & or Stoch. && sample from a density on $\cY^i$  &  \\ 
\hline
\end{tabular}
\caption{Comparison of the oracles used by different methods and their iteration complexity. Here, ``Deter.'' means ``deterministic'' and ``Stoch.'' means ``stochastic''. Besides, ``S'' and ``NS'' represent ``smooth'' and ``nonsmooth'', respectively, in the table. $\tilde O(\cdot)$ means some logarithmic factors of $\epsilon$ are omitted. }
\label{table:1}
\end{table}

This paper is not the first effort on developing first-order methods for SIP. Wei et al. in \cite{wei2020inexact} propose primal-dual gradient methods for SIP and SSIP, where the dual variables are probability densities on $\cY^i$'s. They require sampling from $\cY^i$'s using those densities whose computational cost is high when  $\cY^i$'s have a high dimension. Wei et al. also propose in \cite{Wei2020TheCA} a cooperative (switching) gradient method for SIP where, in each iteration, they assume a nearly optimal solution to $\max_{\by^i\in\cY^i} g_i(\bx,\by^i)$ can be found. On the contrary, our method only needs to compute the (stochastic) gradients of $g_i$ and $f$ per iteration, which in general has significantly lower computational cost than the oracles (sampling over a density or solving a maximization problem) needed in \cite{wei2020inexact,Wei2020TheCA}. Note that, because of the strong oracles they assume, $g_i$ does not need to be concave in $\by^i$ in \cite{wei2020inexact,Wei2020TheCA}.

Despite using weaker oracles,  our method needs the same or fewer iterations than  \cite{wei2020inexact,Wei2020TheCA} for finding an $\epsilon$-optimal solution. As presented in Table~\ref{table:1}, our method needs only $O(\epsilon^{-1})$ iterations for SIP when $\mu_f=0$ and $\mu_y>0$ while  the methods in \cite{wei2020inexact,Wei2020TheCA} need $O(\epsilon^{-2})$ iterations if applied to the same problems. When $\mu_f>0$ and $\mu_y=0$, our method needs $O(\epsilon^{-1})$ iterations while \cite{Wei2020TheCA} needs to further assume $g_i$ is strongly convex in $\bx$ to achieve the same complexity. Overall, this paper is the first work that establishes the iteration complexity of a single-loop method for convex SIP and SSIP under the scenarios listed in Table~\ref{table:1} using only (stochastic) gradient oracles.



\section{Related Works}
SIP has been systematically discussed in several monographs~\cite{reemtsen1998semi,reemtsen1998numerical,goberna2002linear,lopez2007semi,hettich1993semi,goberna2013semi} and has many applications as listed in the previous section. Most numerical algorithms for optimization with finitely many constraints can be somehow extended for SIP.  The examples include the penalty method~\cite{conn1987exact,lin2014new,xu2014solving}, the barrier method~\cite{kaliski1997logarithmic,luo1999complexity},  the interior-point method~\cite{ferris1989interior,todd1994interior}, the Lagrangian method~\cite{ruckmann2009augmented,coope1985projected}, the sequential quadratic programming method~\cite{tanaka1988globally,gramlich1995local}, and the trust-region method~\cite{tanaka1999trust}. However, most of these works only show an asymptotic convergence property or a local convergence rate. On the contrary, we establish the global non-asymptotic convergence rate for our algorithm and characterize its iteration complexity for finding an $\epsilon$-optimal solution. Luo et al.  \cite{luo1999complexity} also analyze the complexity of a logarithmic barrier method, but their study only focuses on a linear SIP instead of the nonlinear problem \eqref{eq:SIP}. 

Besides the aforementioned approaches stemmed from nonlinear programming, there exist techniques specifically developed for SIP based on different strategies for handling infinitely many constraints. The cutting-plane method~\cite{kortanek1993central,wu1998relaxed,betro2004accelerated,pang2016constrained,fang2001solving,oustry2023convex,cerulli2022convergent} solves the lower-level problem,  $\max_{\by^i\in\cY^i}g_i(\bx,\by^i)$, $i=1,\dots,m$ up to certain optimality gap in order to construct a cutting plane to update solutions. The computational complexity for solving the subproblems is non-negligible but not explicitly analyzed in those works. Our method does not directly solve $\max_{\by^i\in\cY^i}g_i(\bx,\by^i)$ for any targeted precision but only take a momentum gradient ascend step on $\by^i$ for $i=1,\dots,m$ in each iteration. 

The discretization method~\cite{teo2000computational,still2001discretization,xu2013nonlinear} chooses or samples a finite set $\widehat\cY^i \subset \cY^i$ and solve a relaxation of \eqref{eq:SIP} where the constraints are $g_i(\bx,\by^i)\leq 0$ for any $\by^i\in\widehat\cY^i$ and $i=1,\dots,m$. It is easy to implement but, to ensure a tight relaxation, the size of $\widehat\cY^i$ must typically increase exponentially with the dimension $q$, known as the curse of dimensionality. 
A related method is the exchange method~\cite{fang2001solving,zhang2010new,kortanek1993central,wu1998relaxed} which updates $\widehat\cY^i$ iteratively by including the (nearly) optimal solution of the lower-level problem and thus has the same computational issue as the cutting-plane method mentioned earlier. 

The reduction method~\cite{pereira2009reduction,hettich1993semi} assumes the optimal solution to the lower problem, namely, $\by^i(\bx)\in\argmax_{\by^i\in\cY^i}g_i(\bx,\by^i)$, is an implicit function of $\bx$ so the infinitely many constraints in \eqref{eq:SIP} can be replaced by finitely many constraints $g_i(\bx,\by^i(\bx))\leq 0$, $i=1,\dots,m$ only on $\bx$. However, this approach needs strong assumptions on the regularity of $g_i$ and  $\cY^i$ to ensure uniqueness and differentiability of $\by^i(\bx)$.

Compared to the existing approaches for SIP, the numerical tools for SSIP remain rare and are only developed for a few specific applications~\cite{guo2008ismisip,guo2009interval,lin2017revisiting}. This paper fills this gap by providing a stochastic gradient method for SSIP with theoretical complexity analysis. Suppose $\cY^i$ depends on $\bx$ as a set-value mapping $\cY^i(\bx):\mathcal{X}\rightarrow 2^{\mathbb{R}^q}$. Problem~\eqref{eq:SIP} becomes a generalized semi-infinite program~\cite{vazquez2008generalized}, which is beyond the scope of this paper and is our future research direction.

The algorithm proposed in this paper is motivated by the recent development on the  first-order methods for min-max optimization, constrained optimization and composite optimization~\cite{yao2023stochastic,chambolle2011first,chambolle2016ergodic,hamedani2021primal,zhang2020optimal,boob2022stochastic}. The accelerated primal-dual first-order method is originally developed by~\cite{chambolle2011first,chambolle2016ergodic} for a bilinear min-max problem. In their method, a momentum gradient direction is introduced to update the solutions, which improves the convergence rate. Their method is extended by~\cite{hamedani2021primal} for a non-bilinear min-max problem, which covers convex optimization with finitely many constraints as a special case. The method by~\cite{hamedani2021primal} is further modified by \cite{boob2022stochastic} by combining an extrapolation technique with a momentum gradient step and allowing stochastic gradient oracles. The same convergence rates are obtained by~\cite{hamedani2021primal} and \cite{boob2022stochastic} for deterministic convex constrained optimization. However, to achieve the claimed convergence rates, \cite{hamedani2021primal} does not require a known upper bound of the optimal Lagrangian while \cite{boob2022stochastic} does when the problem is not strongly convex. Besides, Hamedani and Aybat (2021) also introduce a back tracking technique to estimate the local Lipschitz constants which effectively improves the numerical performance.
Similar momentum gradient steps are also used for convex nested composite optimization~\cite{zhang2020optimal}.


Our method also uses a momentum gradient step similar to the ones in \cite{chambolle2011first,chambolle2016ergodic,hamedani2021primal,boob2022stochastic,zhang2020optimal} to update the constraint parameters $\by$ and uses the momentum gradient step with extrapolation in \cite{boob2022stochastic} to update the dual variable $\vlambda$. Different from \cite{hamedani2021primal,boob2022stochastic}, our method can be applied to \eqref{eq:SIP} with infinitely many constraints. In \cite{yao2023stochastic}, the authors studied a class of fairness constraints in machine learning that can be formulated as a stochastic composite constraint 
$$
d^*(\mathbb{E}_\xi\vG_i(\bx,\xi))=\max_{\by^i\in\cY^i}\left[(\by^i)^\top\mathbb{E}_\xi\vG_i(\bx,\xi)-d(\by^i)\right]\leq 0,
$$ 
where $\cY^i\subset\mathbb{R}_+^q$ is a convex  closed set, $\xi$ is a random variable, $d(\cdot)$ is a convex closed function whose Fenchel conjugate is $d^*(\cdot)$, and $\vG_i(\bx,\xi)$ is a vector-valued mapping that is convex in $\bx$ in each component. Problem \eqref{eq:SIP} covers their problem as a special case when $g_i(\bx,\by^i)=(\by^i)^\top\mathbb{E}\vG(\bx,\xi)-d(\by^i)$. First-order methods for SIP and SSIP have also be studied in \cite{wei2020inexact,Wei2020TheCA} where the algorithms need strong oracles per iteration. The differences between \cite{wei2020inexact,Wei2020TheCA} and this work is discussed in the end of previous section. We also refer readers to Table~\ref{table:1} for a clear comparison.






\section{Notations and Preliminaries}
\label{sec:notation}

We first introduce a few notations. Let $\cY:=\cY^1\times\cY^2\times\cdots\times\cY^m\subset\mathbb{R}^{mq}$, 
\begin{eqnarray}
\label{eq:gvecstarplus}
\vg_+^*(\bx)=(\max\{g_1^*(\bx),0\},\dots,\max\{g_m^*(\bx),0\})^\top,
\end{eqnarray}
\begin{eqnarray}
\label{eq:gvec}
\vg(\bx,\by)=(g_1(\bx,\by^{1}),\dots,g_m(\bx,\by^{m}))^\top,
\end{eqnarray}
\begin{align}
\label{eq:gradxygvec}
\nabla_x\vg(\bx,\by)=
\left[
\begin{array}{c}
\nabla_x g_1(\bx,\by^1)^\top \\
\nabla_x g_2(\bx,\by^2)^\top \\
 \vdots\\
 \nabla_x g_m(\bx,\by^m)^\top\\
\end{array}
\right]\in\mathbb{R}^{m\times p}
\end{align}
and
\begin{align}
\label{eq:gradygvec}
\nabla_y\vg(\bx,\by)=
\left[
\begin{array}{cccc}
\nabla_y g_1(\bx,\by^1)^\top & \mathbf{0} & \cdots & \mathbf{0}\\
\mathbf{0} & \nabla_y g_2(\bx,\by^2)^\top & \cdots & \vdots\\
 \mathbf{0} & \mathbf{0} & \ddots & \vdots\\
  \mathbf{0} & \mathbf{0} & \cdots & \nabla_y g_m(\bx,\by^m)^\top\\
\end{array}
\right]\in\mathbb{R}^{m\times mq}.
\end{align}
Similarly, using the notations from Assumption~\ref{assume:oracle}, the stochastic oracles of $\vg$, $\nabla_x\vg$ and $\nabla_y\vg$ can be defined as follows 
\begin{eqnarray}
\label{eq:Gvec}
\bG(\bx,\by,\xi)=(G_1(\bx,\by^{1},\xi),\dots,G_m(\bx,\by^{m},\xi))^\top,
\end{eqnarray}
\begin{align}
\label{eq:gradxyGvec}
\nabla_x\bG(\bx,\by,\xi)=
\left[
\begin{array}{c}
\nabla_xG_1(\bx,\by^1,\xi)^\top \\
\nabla_xG_2(\bx,\by^2,\xi)^\top \\
\vdots\\
\nabla_xG_m(\bx,\by^m,\xi)^\top\\
\end{array}
\right]\in\mathbb{R}^{m\times p}
\end{align}
and
\small
\begin{align}
\label{eq:gradyGvec}
\nabla_y\bG(\bx,\by,\xi)=
\left[
\begin{array}{cccc}
\nabla_yG_1(\bx,\by^1,\xi)^\top & \mathbf{0} & \cdots & \mathbf{0}\\
\mathbf{0} & \nabla_yG_2(\bx,\by^2,\xi)^\top & \cdots & \vdots\\
 \mathbf{0} & \mathbf{0} & \ddots & \vdots\\
  \mathbf{0} & \mathbf{0} & \cdots & \nabla_yG_m(\bx,\by^m,\xi)^\top\\
\end{array}
\right]\in\mathbb{R}^{m\times mq}.
\end{align}
\normalsize
Given any $(\bx',\by)\in\mathcal{X}\times\mathcal{Y}$, we define a vector-valued mapping $\ell_{\vg}(\cdot; \vx',\vy):\mathcal{X}\rightarrow \mathbb{R}^m$ as
\begin{equation}
\label{eq:ell}
\ell_{\vg}(\vx; \vx',\vy):=\vg(\vx',\vy)+\nabla_x \vg(\vx',\vy)(\vx-\vx').
\end{equation}
By the convexity of $g_i$ in $\bx$, the following inequality holds in a component-wise sense
\begin{equation}
\label{eq:ellg}
\vg(\bx,\by)\geq\ell_{\vg}(\vx; \vx',\vy), ~\forall \bx,\bx'\in\mathcal{X},~\by\in\mathcal{Y}.
\end{equation}
Using the oracles from Assumption~\ref{assume:oracle}, a stochastic estimation of $\ell_{\vg}(\vx; \vx',\vy)$ can be constructed, which is a stochastic mapping $\ell_{\vG}(\cdot,\xi;\vx',\vy):\mathcal{X}\rightarrow \mathbb{R}^m$ defined as
\begin{equation}
\label{eq:ell_stochastic}
\ell_{\vG}(\vx,\xi;\vx',\vy):=\vG(\vx',\vy,\xi)+\nabla_x\vG(\vx',\vy,\xi)(\vx-\vx').
\end{equation}
Given $\vlambda=(\lambda_1,\dots,\lambda_m)^\top\in\mathbb{R}_+^m$ and $\vy,\tilde\vy\in\cY$, we define $\|\vlambda\|_1$ as the $\ell_1$-norm and define
$$
\|\by-\tilde\vy\|_{\vlambda}^2:=\sum_{i=1}^m\lambda_i\|\by^i-\tilde\vy^i\|^2.
$$

\section{Algorithm}
\label{sec:alg}
Recall the Lagrangian function $\phi(\bx,\vlambda,\by)$ in \eqref{eq:Lagrangian}. Suppose $(\vx_k,\vy_k,\vlambda_k)$ are the primal and dual solutions of \eqref{eq:Lagrangian} generated at the $k$th iteration of an algorithm. To generate the next dual solutions, the existing approach (e.g. \cite{nemirov09,nemirovski-2005-prox,hamedani2021primal}) for a saddle-point problem typically will update $(\vy_k,\vlambda_k)$ together using the gradient or the momentum of the gradient of $\phi$ with respect to $\vy$ and $\vlambda$. However, the convergence of such a method to the global optimal solution becomes unclear because of the non-concavity of $\phi$ in $(\vy,\vlambda)$. Note that $\phi$ is still concave in $\vy$ while $\vlambda$ is fixed, and vice versa, so a potential approach is to update $\vy$ and $\vlambda$ sequentially.  This leads to the algorithm in Algorithm~\ref{alg:PDHG}, which we will explain in details next. 

In particular, one can update $\vy_k^i$ for each $i$ first by taking a gradient step with a momentum term towards maximizing $g_i(\bx_k,\by^i)$ over $\by^i\in \cY^i$, namely, 
\begin{eqnarray*}
\vy_{k+1}^i = \argmin_{\by^i\in\cY^i} -\left\langle \nabla_yg_i(\bx_k,\by_k^i)+\theta_k \left[\nabla_yg_i(\bx_k,\by_k^i)-\nabla_yg_i(\bx_{k-1},\by_{k-1}^i)\right],\by\right\rangle+\frac{\sigma_k}{2}\|\by^i-\by_k^i\|^2.
\end{eqnarray*}
Here, $\theta_k\geq0$ is a momentum parameter and $\sigma_k\geq0$ is a step size. Note that $\nabla_yg_i(\bx_k,\by_k^i)-\nabla_yg_i(\bx_{k-1},\by_{k-1}^i)$ can be viewed as the momentum of the $\nabla_yg_i$ between two consecutive iterations. When $\theta_k=0$, the updating equation above is just the gradient ascend step to maximize $g_i(\bx_k,\by^i)$ over $\by^i\in \cY^i$. When $\theta_k>0$, the updating direction $\nabla_yg_i(\bx_k,\by_k^i)+\theta_k \left[\nabla_yg_i(\bx_k,\by_k^i)-\nabla_yg_i(\bx_{k-1},\by_{k-1}^i)\right]$ can be used to further accelerate the gradient methods as known in literature~\cite{hamedani2021primal,mokhtari2020unified,chambolle2016ergodic,chambolle2011first,zhang2020optimal}. This update can be performed for $i=1,\dots,m$ independently and is what Line 4 and 5 do in Algorithm~\ref{alg:PDHG}. 

Similarly, one can update $\vlambda_k$ by taking a gradient step with a momentum term towards maximizing $\phi(\bx_k,\by_{k+1},\vlambda)$ over $\vlambda\in\mathbb{R}^m_+$, namely, 
\begin{align}
\label{eq:updatelambda_old}
\vlambda_{k+1} = \argmin_{\vlambda \geq \textbf{0}} - \left\langle \vg(\vx_k,\vy_{k+1})+\theta_k\left[\vg(\vx_k,\vy_k)-\vg(\vx_{k-1},\vy_k)\right], \vlambda\right\rangle+\frac{\gamma_k}{2}\|\vlambda-\vlambda_k\|^2.
\end{align}
Similar to the updating step for $\vy^i$, $\theta_k\geq0$ is a momentum parameter and $\gamma_k\geq0$ is a step size. In our algorithm, we implement an variant of \eqref{eq:updatelambda_old} for updating $\vlambda_k$, where $\vg$ is replaced by its linear approximation $\ell_{\vg}$ in \eqref{eq:ell}. In particular, we update $\vlambda$ using
\begin{align}
\label{eq:updatelambda_new}
\vlambda_{k+1} = \argmin_{\vlambda \geq \textbf{0}} - \left\langle \ell_{\vg}(\bx_k;\bx_{k-1},\by_{k+1})+\theta_k \left[\ell_{\vg}(\bx_k;\bx_{k-1},\by_k)-\ell_{\vg}(\bx_{k-1};\bx_{k-2},\by_k)\right], \vlambda\right\rangle+\frac{\gamma_k}{2}\|\vlambda-\vlambda_k\|^2.
\end{align}
This method is originally proposed by  \cite{boob2022stochastic} for optimization with finitely many constraints and, here, we generalize it for \eqref{eq:SIP}. This is the updates performed in Lines 6 and 7 do in Algorithm~\ref{alg:PDHG}.


Once we have updated $(\vy_{k+1},\vlambda_{k+1})$, we can generate $\vx_{k+1}$ by a gradient step from $\vx_k$ using the gradient of $\phi$ with respect to $\bx$ at $(\vx_k,\vy_{k+1},\vlambda_{k+1})$. This is performed in Line 8 of Algorithm~\ref{alg:PDHG} where $\tau_k$ is a step size. After $K$ iterations, the algorithm is terminated and returns the weighted average of $\vx_{k+1}$ for $k=0,\dots,K-1$ using $t_k\geq0$ as the weight. 

As shown in  \cite{hamedani2021primal} and \cite{boob2022stochastic} for finitely constrained problems, there are subtle differences between \eqref{eq:updatelambda_old} and  \eqref{eq:updatelambda_new} in their impacts to the algorithm's convergence analysis. When \eqref{eq:updatelambda_old} is applied, the convergence analysis in \cite{hamedani2021primal}  either assumes an upper bound $\Lambda$ of $\|\vlambda^*\|$ is known and $\vlambda$ is projected to a bounded domain $\{\vlambda\geq \textbf{0}| \|\vlambda\|\leq \Lambda \}$ in each iteration, or uses induction to show that $\vlambda_{k}$ is bounded for all iterations and uses a backtracking step to select $\tau_k$. On the contrary, when \eqref{eq:updatelambda_new} is applied, the algorithm in \cite{boob2022stochastic} does not require knowing $\Lambda$ but only achieve the optimal convergence rate when $\Lambda$ is known.

\begin{algorithm}[t]
\caption{Accelerated gradient method for SIP (AGSIP)}\label{alg:PDHG}
\begin{algorithmic}[1]
\STATE{ \textbf{Inputs:} $\bx_0\in\cX$, $\by_0\in\cY$, $\vlambda_0\in\mathbb{R}_+^m$, $K\in\mathbb{Z}_+$,
$\sigma_k$, $\gamma_k$, $\tau_k$, $\theta_k$ and $t_k\geq0$ for $k=0,1,\dots,K-1$.}
\STATE{ $\bx_{-2}=\bx_{-1}=\bx_{0}$, $\by_{-1}=\by_{0}$}
\FOR{$k=0,1\cdots,K-1$}
\STATE{  $\vu_k^i = \nabla_yg_i(\bx_k,\by_k^i)+\theta_k \left[\nabla_yg_i(\bx_k,\by_k^i)-\nabla_yg_i(\bx_{k-1},\by_{k-1}^i)\right]$, $i=1,\dots,m$}
\STATE{  $\by_{k+1}^i = \argmin_{\by^i\in\cY^i} -\langle\vu_k^i,\by^i\rangle+\frac{\sigma_k}{2}\|\by^i-\by_k^i\|^2$, $i=1,\dots,m$}
\STATE{  $\vv_k = \ell_{\vg}(\bx_k;\bx_{k-1},\by_{k+1})+\theta_k \left[\ell_{\vg}(\bx_k;\bx_{k-1},\by_k)-\ell_{\vg}(\bx_{k-1};\bx_{k-2},\by_k)\right]$}
\STATE{  $\vlambda_{k+1} = \argmin_{\vlambda \geq \textbf{0}} - \left\langle \vv_k, \vlambda\right\rangle+\frac{\gamma_k}{2}\|\vlambda-\vlambda_k\|^2$}
\STATE{  $\bx_{k+1} = \argmin_{\bx\in\cX} \left\langle \nabla f(\vx_k)+\nabla_x\vg(\vx_k,\vy_{k+1})^\top \vlambda_{k+1}, \bx\right\rangle+\frac{\tau_k}{2}\|\bx-\bx_k\|^2$}
\ENDFOR
\RETURN $\bar\bx_K=\frac{\sum_{k=0}^{K-1}t_k\bx_{k+1}}{\sum_{k=0}^{K-1}t_k}$
\end{algorithmic}
\end{algorithm}

When only the stochastic oracles are available, we can still implement Algorithm~\ref{alg:PDHG} by replacing the gradients and function values in each iteration using the corresponding stochastic estimators in \eqref{eq:Gvec}, \eqref{eq:gradxyGvec}, \eqref{eq:gradyGvec} and \eqref{eq:ell_stochastic}. However, for the theoretical analysis to go through, it is critical to ensure the independence among those stochastic estimators used in iteration $k$ conditioning on $(\vx_l,\vy_l,\vlambda_l)$ for $l=0,\dots,k$. To do so, we samples six independent samples of $\xi$, denoted by $\xi_k^i$ $i=1,2,\dots,6$, and use them to construct the stochastic oracles for $\nabla_y\vg(\vx_k,\vy_k)$, $\nabla_y\vg(\vx_{k-1},\vy_{k-1})$, $\ell_{\vg}(\bx_k;\bx_{k-1},\by_{k+1})$, $\ell_{\vg}(\bx_k;\bx_{k-1},\by_k)$, $\ell_{\vg}(\bx_{k-1};\bx_{k-2},\by_k)$,  and $\nabla f(\vx_k)+\nabla_x\vg(\vx_k,\vy_{k+1})^\top \vlambda_{k+1}$, respectively. We want to point out that we generate six samples of $\xi$ in each iteration only to index the stochastic oracles more clearly. All of our results can be still obtained by setting 
\begin{equation}
\label{eq:threesamples}
\xi_k^1=\xi_k^2 \text{ and }\xi_k^3=\xi_k^4=\xi_k^5
\end{equation}
in each iteration, so we actually need only three samples, i.e., $\xi_k^1$, $\xi_k^3$ and $\xi_k^6$. We will only analyze the convergence property of Algorithm~\ref{alg:SPDHG} because it covers Algorithm~\ref{alg:PDHG} as a special case when deterministic estimators are used.

\begin{algorithm}[t]
\caption{Stochastic gradient method for SIP (SGSIP) }\label{alg:SPDHG}
\begin{algorithmic}[1]
\STATE{ \textbf{Inputs:} $\bx_0\in\cX$, $\by_0\in\cY$ and $\vlambda_0\in\mathbb{R}_+^m$}
\STATE{ $\bx_{-2}=\bx_{-1}=\bx_{0}$, $\by_{-1}=\by_{0}$}
\FOR{$k=0,1\cdots,$}
\STATE{ Sample $\xi_k^i$, $i=1,\dots,6$ from the distribution of $\xi$.}
\STATE{  $\vu_k^i = \nabla_yG_i(\bx_k,\by_k^i,\xi_k^1)+\theta_k \left[\nabla_yG_i(\bx_k,\by_k^i,\xi_k^1)-\nabla_yG_i(\bx_{k-1},\by_{k-1}^i,\xi_{k}^2)\right]$, $i=1,\dots,m$}
\STATE{  $\by_{k+1}^i = \argmin_{\by^i\in\cY^i} -\langle\vu_k^i,\by^i\rangle+\frac{\sigma_k}{2}\|\by^i-\by_k^i\|^2$, $i=1,\dots,m$}
\STATE{  $\vv_k = \ell_{\vG}(\bx_k,\xi_k^3;\bx_{k-1},\by_{k+1})+\theta_k \left[\ell_{\vG}(\bx_k,\xi_k^4;\bx_{k-1},\by_k)-\ell_{\vG}(\bx_{k-1},\xi_k^5;\bx_{k-2},\by_k)\right]$}
\STATE{  $\vlambda_{k+1} = \argmin_{\vlambda \geq \textbf{0}} - \left\langle \vv_k, \vlambda\right\rangle+\frac{\gamma_k}{2}\|\vlambda-\vlambda_k\|^2$}
\STATE{  $\bx_{k+1} = \argmin_{\bx\in\cX} \left\langle \nabla F(\vx_k,\xi_k^6)+\nabla_x\vG(\vx_k,\vy_{k+1},\xi_k^6)^\top \vlambda_{k+1}, \bx\right\rangle+\frac{\tau_k}{2}\|\bx-\bx_k\|^2$}
\ENDFOR
\RETURN $\bar\bx_K=\frac{\sum_{k=0}^{K-1}t_k\bx_{k+1}}{\sum_{k=0}^{K-1}t_k}$
\end{algorithmic}
\end{algorithm}

\section{Bounding Primal-Dual Gap}
In this section, we present the main results on the convergence properties of Algorithms~\ref{alg:PDHG} and~\ref{alg:SPDHG}. 
Let $\bz=(\bx,\vlambda,\by)$ and $\bz_k=(\bx_k,\vlambda_k,\by_k)$ for $k\geq0$. Following \cite{zhang2020optimal}, we write 
$\phi(\bx_{k+1},\vlambda,\by) - \phi(\bx,\vlambda_{k+1},\by_{k+1})$ as the summation of three gaps and derive an upper bound on gap. In particular, according to \eqref{eq:Lagrangian}, we have
\begin{eqnarray}
\label{eq:Qs}
       ~~~\phi(\bx_{k+1},\vlambda,\by) - \phi(\bx,\vlambda_{k+1},\by_{k+1})
       &=&  Q_2(\bz_{k+1},\bz) + Q_1(\bz_{k+1},\bz)+ Q_0(\bz_{k+1},\bz),
\end{eqnarray}
where 
\begin{eqnarray}
\label{eq:Qs2}
        Q_2(\bz_{k+1},\bz)&:=&~\phi(\bx_{k+1},\vlambda,\by) - \phi(\bx_{k+1},\vlambda,\by_{k+1}) \\\nonumber
        &=& ~\vlambda^{\top}(\vg(\bx_{k+1},\by)-\vg(\bx_{k+1},\by_{k+1})) \\\label{eq:Qs1}
         Q_1(\bz_{k+1},\bz)&:=&~\phi(\bx_{k+1},\vlambda,\by_{k+1}) - \phi(\bx_{k+1},\vlambda_{k+1},\by_{k+1})\\\nonumber
         &=&~(\vlambda-\vlambda_{k+1})^{\top}\vg(\bx_{k+1},\by_{k+1})\\\label{eq:Qs0}
        Q_0(\bz_{k+1},\bz)&:=& ~\phi(\bx_{k+1},\vlambda_{k+1},\by_{k+1}) - \phi(\bx,\vlambda_{k+1},\by_{k+1})\\\nonumber
       & =&~ f(\bx_{k+1})-f(\bx)+\vlambda_{k+1}^\top(\vg(\bx_{k+1},\by_{k+1})-\vg(\bx,\by_{k+1})).
\end{eqnarray}
The convergence properties of Algorithm~\ref{alg:SPDHG} are then derived by bounding $\phi(\bx_{k+1},\vlambda,\by) - \phi(\bx,\vlambda_{k+1},\by_{k+1})$ from above by summing up some upper bounds of $Q_0$, $Q_1$ and $Q_2$, and choosing parameters $\sigma_k$, $\gamma_k$, $\tau_k$, $\theta_k$ and $t_k\geq 0$ for $k=0,1,\dots$ appropriately according to the smoothness of the problem, strong convexity/concavity of the problem, and the types (deterministic or stochastic) of oracle used.

The following lemma is a conclusion from Lemmas 2.1 and 6.1 in~\cite{nemirov09} and is needed to obtain the upper bounds of $Q_0$, $Q_1$ and $Q_2$. Its proof is provided for the sack of completeness.
\begin{lemma}
\label{eq:threepoint}
Given a closed convex set $\mathcal{W}\subset\mathbb{R}^d$ and a point $\vw_0\in\mathcal{W}$, a sequence $\{\vw_k\}_{k\geq0}$ is generated by
$$
\vw_{k+1} = \argmin_{\vw\in\mathcal{W}}\left\langle \vs_k+\vdelta_k, \vw\right\rangle+\frac{\eta_k}{2}\|\vw-\vw_k\|^2,~k=0,\dots,
$$
where $\eta_k>0$, $\vs_k\in\mathbb{R}^d$ and $\vdelta_k\in\mathbb{R}^d$. It holds, for any $\vw\in\mathcal{W}$, that
\begin{eqnarray}
\label{eq:threepoint_sk1}
\left\langle \vs_k+\vdelta_k, \vw_{k+1}-\vw\right\rangle+\frac{\eta_k}{2}\|\vw-\vw_{k+1}\|^2
\leq -\frac{\eta_k}{2}\|\vw_{k+1}-\vw_k\|^2+
\frac{\eta_k}{2}\|\vw-\vw_k\|^2.
\end{eqnarray}
Additionally, given $\tilde\vw_0=\vw_0$, a sequence $\{\tilde\vw_k\}_{k\geq0}$ is generated by
$$
\tilde\vw_{k+1} = \argmin_{\vw\in\mathcal{W}}-\left\langle \vdelta_k, \vw\right\rangle+\frac{\eta_k}{2}\|\vw- \tilde\vw_k\|^2,~k=0,1,\dots.
$$
It holds, for any $\vw\in\mathcal{W}$, that
\begin{eqnarray}
\label{eq:threepoint_sk4}
&&\left\langle \vs_k, \vw_{k+1}-\vw\right\rangle+\frac{\eta_k}{2}\|\vw-\vw_{k+1}\|^2+\frac{\eta_k}{2}\|\vw-\tilde\vw_{k+1}\|^2\\\nonumber
&\leq &-\frac{\eta_k}{4}\|\vw_{k+1}-\vw_k\|^2+
\frac{\eta_k}{2}\|\vw-\vw_k\|^2+\frac{\eta_k}{2}\|\vw-\tilde\vw_k\|^2\\\nonumber
&&
+\left\langle \vdelta_k, \tilde\vw_k-\vw_k\right\rangle+ \frac{3}{2\eta_k}\|\vdelta_k\|^2.
\end{eqnarray}
\end{lemma}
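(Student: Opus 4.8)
The strategy is to obtain both inequalities from the first–order optimality condition of a strongly convex proximal subproblem, exactly as in the proof of Lemmas 2.1 and 6.1 of \cite{nemirov09}. Concretely: (i) prove \eqref{eq:threepoint_sk1} directly from the variational inequality characterizing $\vw_{k+1}$ together with the elementary three–point identity for squared Euclidean norms; and (ii) bootstrap \eqref{eq:threepoint_sk4} out of \eqref{eq:threepoint_sk1} by peeling off the $\vdelta_k$–terms and reapplying (i) to the auxiliary sequence $\{\tilde\vw_k\}$.

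For \eqref{eq:threepoint_sk1}: since $\vw_{k+1}$ minimizes the $\eta_k$–strongly convex function $\vw\mapsto\langle\vs_k+\vdelta_k,\vw\rangle+\tfrac{\eta_k}{2}\|\vw-\vw_k\|^2$ over the closed convex set $\mathcal{W}$, its optimality condition reads $\langle\vs_k+\vdelta_k+\eta_k(\vw_{k+1}-\vw_k),\,\vw-\vw_{k+1}\rangle\geq0$ for all $\vw\in\mathcal{W}$. Rearranging yields $\langle\vs_k+\vdelta_k,\vw_{k+1}-\vw\rangle\leq\eta_k\langle\vw_{k+1}-\vw_k,\vw-\vw_{k+1}\rangle$, and substituting $\langle\vw_{k+1}-\vw_k,\vw-\vw_{k+1}\rangle=\tfrac{1}{2}\|\vw-\vw_k\|^2-\tfrac{1}{2}\|\vw-\vw_{k+1}\|^2-\tfrac{1}{2}\|\vw_{k+1}-\vw_k\|^2$ gives \eqref{eq:threepoint_sk1} immediately.

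For \eqref{eq:threepoint_sk4}: start from \eqref{eq:threepoint_sk1}, isolate $\langle\vs_k,\vw_{k+1}-\vw\rangle$ on the left, and split $-\langle\vdelta_k,\vw_{k+1}-\vw\rangle=-\langle\vdelta_k,\vw_{k+1}-\vw_k\rangle+\langle\vdelta_k,\tilde\vw_k-\vw_k\rangle-\langle\vdelta_k,\tilde\vw_k-\vw\rangle$, which already produces the term $\langle\vdelta_k,\tilde\vw_k-\vw_k\rangle$ in the claim. The cross term $-\langle\vdelta_k,\vw_{k+1}-\vw_k\rangle$ is absorbed against half of the available negative quadratic via completing the square, $-\langle\vdelta_k,\vw_{k+1}-\vw_k\rangle-\tfrac{\eta_k}{4}\|\vw_{k+1}-\vw_k\|^2\leq\tfrac{1}{\eta_k}\|\vdelta_k\|^2$, leaving the residual $-\tfrac{\eta_k}{4}\|\vw_{k+1}-\vw_k\|^2$. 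For the last piece $-\langle\vdelta_k,\tilde\vw_k-\vw\rangle$, apply (i) to the $\{\tilde\vw_k\}$ recursion (the same proximal step with linear part $-\vdelta_k$, zero perturbation, base point $\tilde\vw_k$) to get $-\langle\vdelta_k,\tilde\vw_{k+1}-\vw\rangle+\tfrac{\eta_k}{2}\|\vw-\tilde\vw_{k+1}\|^2\leq-\tfrac{\eta_k}{2}\|\tilde\vw_{k+1}-\tilde\vw_k\|^2+\tfrac{\eta_k}{2}\|\vw-\tilde\vw_k\|^2$; then write $-\langle\vdelta_k,\tilde\vw_k-\vw\rangle=-\langle\vdelta_k,\tilde\vw_{k+1}-\vw\rangle+\langle\vdelta_k,\tilde\vw_{k+1}-\tilde\vw_k\rangle$ and bound $-\tfrac{\eta_k}{2}\|\tilde\vw_{k+1}-\tilde\vw_k\|^2+\langle\vdelta_k,\tilde\vw_{k+1}-\tilde\vw_k\rangle\leq\tfrac{1}{2\eta_k}\|\vdelta_k\|^2$ by completing the square. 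Moving $\tfrac{\eta_k}{2}\|\vw-\tilde\vw_{k+1}\|^2$ to the left and collecting the noise terms $\tfrac{1}{\eta_k}\|\vdelta_k\|^2+\tfrac{1}{2\eta_k}\|\vdelta_k\|^2=\tfrac{3}{2\eta_k}\|\vdelta_k\|^2$ gives \eqref{eq:threepoint_sk4}.

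Everything here is elementary, so there is no genuine conceptual obstacle; the only delicate point is the accounting of constants. One must split the coefficient $\tfrac{\eta_k}{2}$ on $\|\vw_{k+1}-\vw_k\|^2$ so that exactly $\tfrac{\eta_k}{4}$ is spent absorbing the cross term and $\tfrac{\eta_k}{4}$ is kept in the bound, and tune the two Young–type estimates so that their error constants add up to $\tfrac{3}{2\eta_k}$ rather than, say, $\tfrac{2}{\eta_k}$.
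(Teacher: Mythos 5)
Your proposal is correct and follows essentially the same route as the paper: the three-point/strong-convexity property of the prox step gives \eqref{eq:threepoint_sk1}, Young's inequality absorbs the cross term $\langle\vdelta_k,\vw_{k+1}-\vw_k\rangle$ at a cost of $\frac{1}{\eta_k}\|\vdelta_k\|^2+\frac{\eta_k}{4}\|\vw_{k+1}-\vw_k\|^2$, and the auxiliary-sequence inequality contributes the remaining $\frac{1}{2\eta_k}\|\vdelta_k\|^2$. The only cosmetic difference is that you rederive the bound on $\{\tilde\vw_k\}$ from part (i) plus completing the square, whereas the paper cites it as Lemma 2.1 of the Nemirovski et al. reference; the term accounting is identical.
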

\begin{proof}
Inequality \eqref{eq:threepoint_sk1} is a direct outcome of the $\eta_k$-strong convexity of $\left\langle \vs_k+\vdelta_k, \cdot\right\rangle+\frac{\eta_k}{2}\|\cdot-\vw_k\|^2$ and the definition of $\vw_{k+1}$. Moreover, the Young's inequality gives
\begin{eqnarray}
\nonumber
&&\left\langle \vs_k+\vdelta_k, \vw_{k+1}-\vw\right\rangle\\\nonumber
&=&
\left\langle \vs_k, \vw_{k+1}-\vw\right\rangle
+\left\langle \vdelta_k, \vw_k-\vw\right\rangle
+\left\langle \vdelta_k, \vw_{k+1}-\vw_k\right\rangle\\\label{eq:threepoint_sk2}
&\geq&
\left\langle \vs_k, \vw_{k+1}-\vw\right\rangle
+\left\langle \vdelta_k, \vw_k-\vw\right\rangle
-\frac{1}{\eta_k}\|\vdelta_k\|^2
-\frac{\eta_k}{4}\|\vw_{k+1}-\vw_k\|^2.
\end{eqnarray}
According to Lemma 2.1 in \cite{nemirov09}, we have
\begin{eqnarray}
\label{eq:threepoint_sk3}
\frac{\eta_k}{2}\|\vw-\tilde\vw_{k+1}\|^2
\leq \frac{1}{2\eta_k}\|\vdelta_k\|^2-
\left\langle \vdelta_k, \vw-\tilde\vw_k\right\rangle+\frac{\eta_k}{2}\|\vw-\tilde\vw_k\|^2
\end{eqnarray}
for any $\vw\in\mathcal{W}$. Adding \eqref{eq:threepoint_sk1}, \eqref{eq:threepoint_sk2} and \eqref{eq:threepoint_sk3} and organizing terms give \eqref{eq:threepoint_sk4}.
\qedsymbol{}
\end{proof}

The convergence analysis of Algorithm~\ref{alg:SPDHG} in the stochastic case will involve stochastic errors, which are the differences between the stochastic oracles and their deterministic counterparts. We denote those errors involved in iteration $k$ of Algorithm~\ref{alg:SPDHG}  as follows. 
\begin{equation}
\label{eqn:deltas}
\begin{split}
\vdelta_k^{i,1}=&\nabla_yG_i(\bx_k,\by_k^i,\xi_k^1)-\nabla_yg_i(\bx_k,\by_k^i),~i=1,\dots,m\\
\vDelta_k^{1}=&\nabla_y\vG(\bx_k,\by_k,\xi_k^1)-\nabla_y\vg(\bx_k,\by_k)\\
\vdelta_k^{i,2}=&\nabla_yG_i(\bx_{k-1},\by_{k-1}^i,\xi_k^2)-\nabla_yg_i(\bx_{k-1},\by_{k-1}^i),~i=1,\dots,m\\
\vDelta_k^{2}=&\nabla_y\vG(\bx_{k-1},\by_{k-1},\xi_k^2)-\nabla_y\vg(\bx_{k-1},\by_{k-1})\\
\vdelta_k^{3}=&\vG(\vx_{k-1},\vy_{k+1},\xi_k^3)-\vg(\vx_{k-1},\vy_{k+1})\\
\vDelta_k^{3}=&\nabla_x\vG(\vx_{k-1},\vy_{k+1},\xi_k^3)-\nabla_x\vg(\vx_{k-1},\vy_{k+1})\\
\vdelta_k^{4}=&\vG(\vx_{k-1},\vy_{k},\xi_k^4)-\vg(\vx_{k-1},\vy_{k})\\
\vDelta_k^{4}=&\nabla_xG(\vx_{k-1},\vy_{k},\xi_k^4)-\nabla_x\vg(\vx_{k-1},\vy_{k})\\
\vdelta_k^{5}=&\vG(\vx_{k-2},\vy_{k},\xi_k^5)-\vg(\vx_{k-2},\vy_{k})\\
\vDelta_k^{5}=&\nabla_xG(\vx_{k-2},\vy_{k},\xi_k^5)-\nabla_x\vg(\vx_{k-2},\vy_{k})\\
\vdelta_k^{6}=&\nabla F(\vx_{k},\xi_k^6)-\nabla f(\vx_{k})\\
\vDelta_k^{6}=&\nabla_xG(\vx_{k},\vy_{k+1},\xi_k^6)-\nabla_x \vg(\vx_{k},\vy_{k+1}).
\end{split}
\end{equation}
Here, $\vdelta_k^i$'s are vectors while $\vDelta_k^i$'s are matrices. The superscript $i$ of $\vdelta_k^i$ and $\vDelta_k^i$ indicates that they depend on sample $\xi_k^i$ for $i=1,\dots,6$. Again, we introduce six samples $\xi_k^i$, $i=1,\dots,6$, in each iteration mainly to index the error terms in \eqref{eqn:deltas} more clearly through superscripts. Our theoretical analysis remains valid even with just three samples satisfying \eqref{eq:threesamples}.

Our analysis will involve the following momenta of the gradients 
\begin{align}
\label{eq:qk}
\bar\bq_{k} =& \ell_{\vg}(\vx_{k};\vx_{k-1},\vy_{k})-\ell_{\vg}(\vx_{k-1};\vx_{k-2},\vy_{k}),\\
\label{eq:pk}
\bar\bp_{k} =&  \nabla_y g_i(\bx_{k},\by_{k}^i)-\nabla_y g_i(\bx_{k-1},\by_{k-1}^i),~i=1,\dots,m,\\
\label{eq:Pk}
\bar\bP_{k} =&  \nabla_y \vg(\bx_{k},\by_{k})-\nabla_y \vg(\bx_{k-1},\by_{k-1})
\end{align}
for $k=0,1,\dots$. Here, $\bar\bq_{k}$ is the linear approximation of $\vg(\bx_{k},\by_{k})-\vg(\bx_{k-1},\by_k)$, a momentum of the gradient of $\phi$ with respect to $\vlambda$, while $\bar\bp_{k}$ and $\bar\bP_{k}$ are the momenta of $\nabla_yg_i$ and $\nabla_y\vg$, respectively. Since $\bx_{-2}=\bx_{-1}=\bx_{0}$ and $\by_{-1}=\by_{0}$ in our algorithms, we have
\begin{equation}
\label{eq:barqpP0}
\bar\bq_{0}=\mathbf{0},~ \bar\bp_{0}=\mathbf{0} \text{ and }\bar\bP_{0}=\mathbf{0}.
\end{equation}
The stochastic estimations of $\bar\bq_{k}$, $\bar\bp_{k}$ and $\bar\bP_{k}$ are, respectively,  
\begin{align}
\label{eq:barqk}
\bq_{k} =& \ell_{\vg}(\vx_{k},\xi_k^4;\vx_{k-1},\vy_{k})-\ell_{\vG}(\vx_{k-1},\xi_k^5;\vx_{k-2},\vy_k)\\
\label{eq:barpk}
\bp_{k} =& \nabla_yG_i(\bx_k,\by_k^i,\xi_k^1)-\nabla_yG_i(\bx_{k-1},\by_{k-1}^i,\xi_k^2),~i=1,\dots,m,\\
\label{eq:barPk}
\bP_{k} =& \nabla_y\vG(\bx_k,\by_k,\xi_k^1)-\nabla_y\vG(\bx_{k-1},\by_{k-1},\xi_k^2)
\end{align}
for $k=0,1,\dots$.

\subsection{An upper bound of $Q_2$}
We first define an auxiliary sequence $\{\tilde \vy_k^i\}_{k\geq0}$ for $i=1,\dots,m$, where $\tilde\vy_0^i=\vy_0^i$ and $\tilde\vy_k^i$ for $k\geq1$ are generated as 
\begin{eqnarray}
\label{eq:tildeyk}
\tilde\vy_{k+1}^i=\argmin_{\vy^i}\left\langle \vdelta_k^{i,1}+\theta_k \left(\vdelta_k^{i,1}-\vdelta_k^{i,2}\right),\vy^i\right\rangle+\frac{\sigma_k}{2}\|\by^i-\tilde\by_k^i\|^2,
\end{eqnarray}
where $\vdelta_k^{i,1}$ and $\vdelta_k^{i,2}$ are defined in \eqref{eqn:deltas} and $\sigma_k$ is as in Algorithm~\ref{alg:SPDHG}.
For $i=1,2,\dots,m$, applying conclusion \eqref{eq:threepoint_sk4} in Lemma~\ref{eq:threepoint} to Line 6 of Algorithm~\ref{alg:SPDHG} with the following instantiation
\begin{eqnarray}
\label{eq:threepoint_instance_yki1}
&\vs_k=-\nabla_yg_i(\bx_k,\by_k^i)-\theta_k \left[\nabla_yg_i(\bx_k,\by_k^i)-\nabla_yg_i(\bx_{k-1},\by_{k-1}^i)\right],\\
\label{eq:threepoint_instance_yki2}
&\vdelta_k=-\vdelta_k^{i,1}-\theta_k \left(\vdelta_k^{i,1}-\vdelta_k^{i,2}\right),\\
\label{eq:threepoint_instance_yki3}
&\mathcal{W}=\mathcal{Y}^i,~
\vw_0= \vy_0^i \text{ and }
\eta_k=\sigma_k,
\end{eqnarray}
we have, for any $\vy^i\in\mathcal{Y}^i$, that
\begin{eqnarray}
\label{eq:threepoint_yki}
\left\langle \vs_k, \vy_{k+1}^i-\vy^i\right\rangle&\leq &-\frac{\sigma_k}{4}\|\vy_{k+1}^i-\vy_k^i\|^2+
\frac{\sigma_k}{2}\|\vy^i-\vy_k^i\|^2-\frac{\sigma_k}{2}\|\vy^i-\vy_{k+1}^i\|^2\\\nonumber
&&+\frac{\sigma_k}{2}\|\vy^i-\tilde\vy_k^i\|^2-\frac{\sigma_k}{2}\|\vy^i-\tilde\vy_{k+1}^i\|^2
+\left\langle \vdelta_k, \tilde\vy_k^i-\vy_k^i\right\rangle+\frac{3}{2\sigma_k}\|\vdelta_k\|^2.
\end{eqnarray}

By the $\mu_y$-strong concavity of $g_i$ in $\by^i$, it holds, for any $\by^i \in\cY^i$, that
\begin{eqnarray}
\nonumber
  &&g_i(\bx_{k+1},\by^i)-g_i(\bx_{k+1},\by_{k+1}^i)\\\nonumber
  &\leq& -\langle\nabla_y g_i(\bx_{k+1},\by_{k+1}^i), \by_{k+1}^i-\by^i\rangle-\frac{\mu_y}{2}\|\by_{k+1}^i-\by^i\|^2\\\label{eqn:tri_y_1}
&= &\left\langle \vs_k, \vy_{k+1}^i-\vy^i\right\rangle+\langle \nabla_yg_i(\bx_k,\by_k^i)-\nabla_y g_i(\bx_{k+1},\by_{k+1}^i),\by_{k+1}^i-\by^i \rangle\\\nonumber
    && + \theta_k\langle \nabla_y g_i(\bx_{k},\by_{k}^i)-\nabla_y g_i(\bx_{k-1},\by_{k-1}^i),\by_{k}^i-\by^i \rangle\\\nonumber
    && +\theta_k\langle \nabla_y g_i(\bx_{k},\by_{k}^i)-\nabla_y g_i(\bx_{k-1},\by_{k-1}^i),\by_{k+1}^i-\by_{k}^i\rangle
    -\frac{\mu_y}{2}\|\by_{k+1}^i-\by^i\|^2,
\end{eqnarray}
where the equality holds because of the instance of $\vs_k$ in \eqref{eq:threepoint_instance_yki1}.
We next bound the term $ \theta_k\langle \nabla_y g_i(\bx_{k},\by_{k}^i)-\nabla_y g_i(\bx_{k-1},\by_{k-1}^i),\by_{k+1}^i-\by_{k} ^i\rangle$ on the right-hand side of \eqref{eqn:tri_y_1} as follows
\begin{align}
\nonumber
   & \theta_k\langle \nabla_y g_i(\bx_{k},\by_{k}^i)-\nabla_y g_i(\bx_{k-1},\by_{k-1}^i),\by_{k+1}^i-\by_{k} ^i\rangle\\\nonumber
    \leq& \theta_k\|\nabla_y g_i(\bx_{k},\by_{k}^i)-\nabla_y g_i(\bx_{k-1},\by_{k-1}^i)\|\|\vy_{k+1}^i-\vy_k^i\|\\\nonumber
    \leq& \theta_k\left(
    L^{yx}_g\|\vx_{k}-\vx_{k-1}\|+L^{yy}_g\|\vy_{k}^i-\vy_{k-1}^i\|
    +H_g^y\right)\|\vy_{k+1}^i-\vy_k^i\|\\\nonumber
\leq& \frac{2\theta_k^2}{\sigma_k}\left(
    L^{yx}_g\|\vx_{k}-\vx_{k-1}\|+L^{yy}_g\|\vy_{k}^i-\vy_{k-1}^i\| \right)^2+\frac{\sigma_k}{8}\|\vy_{k+1}^i-\vy_{k}^i\|^2 +\theta_kH_g^y\|\vy_{k+1}^i-\vy_k^i\|      \\\label{eq:eqn:tri_y_0}
      \leq& \frac{4(\theta_kL^{yx}_g)^2}{\sigma_k}\|\vx_{k}-\vx_{k-1}\|^2
       +\frac{4(\theta_kL^{yy}_g)^2}{\sigma_k}\|\vy_{k}^i-\vy_{k-1}^i\|^2 +\frac{\sigma_k}{8}\|\vy_{k+1}^i-\vy_{k}^i\|^2+\theta_kH_g^y\|\vy_{k+1}^i-\vy_k^i\|   ,
\end{align}
where the first inequality is by Cauchy-Schwarz inequality, the second by Assumption~\ref{assume:continuity}.c,  the third  by Young's inequality, and the last by Jensen's inequality. 


Recall \eqref{eq:pk} and \eqref{eq:threepoint_instance_yki2}. Applying \eqref{eq:threepoint_yki} and \eqref{eq:eqn:tri_y_0}  to \eqref{eqn:tri_y_1} and organizing terms lead to
\begin{align}
\nonumber
&g_i(\bx_{k+1},\by^i)-g_i(\bx_{k+1},\by_{k+1}^i)\\\nonumber
\leq & -\langle \bar\bp_{k+1},\by_{k+1}^i-\by^i \rangle+ \theta_k\langle \bar\bp_{k},\by_{k}^i-\by^i \rangle -\frac{\mu_y}{2}\|\by_{k+1}^i-\by^i\|^2\\\nonumber
    & +\frac{\sigma_k}{2}\left[ \|\vy^i-\vy_k^i\|^2-\|\vy^i-\vy_{k+1}^i\|^2+ \|\vy^i-\tilde\vy_k^i\|^2-\|\vy^i-\tilde\vy_{k+1}^i\|^2\right]\\\nonumber
   &+\frac{4(\theta_kL^{yx}_g)^2}{\sigma_k}\|\vx_{k}-\vx_{k-1}\|^2
       +\frac{4(\theta_kL^{yy}_g)^2}{\sigma_k}\|\vy_{k}^i-\vy_{k-1}^i\|^2-\frac{\sigma_k}{8}\|\vy_{k+1}^i-\vy_{k}^i\|^2+\theta_kH_g^y\|\vy_{k+1}^i-\vy_k^i\|\\\label{eqn:tri_y_2}
    &+ \left\langle \vdelta_k^{i,1}+\theta_k \left(\vdelta_k^{i,1}-\vdelta_k^{i,2}\right),\vy_k^i-\tilde\vy_k^i\right\rangle+\frac{3}{2\sigma_k}\left\|\vdelta_k^{i,1}+\theta_k \left(\vdelta_k^{i,1}-\vdelta_k^{i,2}\right)\right\|^2.
\end{align}

By Cauchy-Schwarz inequality, we have 
\begin{equation}
\label{eq:changeylambda}
\sum_{i=1}^m\lambda_i\|\vy_{k+1}^i-\vy_{k}^i\|
\leq \|\vlambda\|_1^{\frac{1}{2}}\|\vy_{k+1}-\vy_{k}\|_\vlambda.
\end{equation}
Recall \eqref{eq:Pk} and 
the definition of $Q_2$ in \eqref{eq:Qs2}. Multiplying \eqref{eqn:tri_y_2} by $\lambda_i\geq0$ and summing it up for $i=1,\dots,m$ give
\begin{align}
\label{eq:Q2ub1_old}
&Q_2(\bz_{k+1},\bz)   = \vlambda^\top(\vg(\bx_{k+1},\by)-\vg(\bx_{k+1},\by_{k+1})) \\\nonumber
\leq & -\vlambda^\top\bar\bP_{k+1}(\by_{k+1}-\by)+ \theta_k\vlambda^\top\bar\bP_k(\by_{k}-\by)-\frac{\mu_y}{2}\|\by_{k+1}-\by\|_{\vlambda}^2 \\\nonumber
& +\frac{\sigma_k}{2}\left[ \|\vy-\vy_k\|_{\vlambda}^2-\|\vy-\vy_{k+1}\|_{\vlambda}^2 +\|\vy-\tilde\vy_k\|_{\vlambda}^2-\|\vy-\tilde\vy_{k+1}\|_{\vlambda}^2\right]\\\nonumber
& +\frac{4(\theta_kL^{yx}_g)^2}{\sigma_k}\|\vlambda\|_1\|\vx_{k}-\vx_{k-1}\|^2
       +\frac{4(\theta_kL^{yy}_g)^2}{\sigma_k}\|\vy_{k}-\vy_{k-1}\|_{\vlambda}^2\\\nonumber
&-\frac{\sigma_k}{8}\|\vy_{k+1}-\vy_{k}\|_{\vlambda}^2+\theta_kH_g^y \|\vlambda\|_1^{\frac{1}{2}}\|\vy_{k+1}-\vy_{k}\|_\vlambda\\\nonumber
&+ \vlambda^\top\left[\vDelta_k^1+\theta_k \left(\vDelta_k^1-\vDelta_k^2\right)\right](\vy_k-\tilde\vy_k)+\frac{3}{2\sigma_k}\left\|\vDelta_k^1+\theta_k \left(\vDelta_k^1-\vDelta_k^2\right)\right\|_{\vlambda}^2
\end{align}
for any $\vlambda\geq0$ and any $\vy$, where \eqref{eq:changeylambda} is applied to obtain term $\theta_kH_g^y \|\vlambda\|_1^{\frac{1}{2}}\|\vy_{k+1}-\vy_{k}\|_\vlambda$ on the right-hand side.


\subsection{An upper bound of $Q_1$}
We first define an auxiliary sequence $\{\tilde\vlambda_k\}_{k\geq0}$, where $\tilde\vlambda_0^i=\vlambda_0$ and $\tilde\vlambda_k$ for $k\geq1$ are generated as 
\begin{align}
\label{eq:tildelambdak}
\tilde\vlambda_{k+1}=\argmin_{\vlambda\geq\mathbf{0}}&\left\langle\ell_{\vG}(\vx_k,\xi_k^3;\vx_{k-1},\vy_{k+1})-\ell_{\vg}(\vx_k;\vx_{k-1},\vy_{k+1})+\theta_k( \bq_{k}-\bar\bq_{k}),\vlambda\right\rangle+\frac{\gamma_k}{2}\|\vlambda-\tilde\vlambda_k\|^2,
\end{align}
where $\ell_{\vg}$, $\ell_{\vG}$,  $\bq_{k}$ and $\bar\bq_{k}$ are defined in \eqref{eq:ell}, \eqref{eq:ell_stochastic},  \eqref{eq:qk} and \eqref{eq:barqk}, respectively, and $\gamma_k$ is as in Algorithm~\ref{alg:SPDHG}. Applying conclusion \eqref{eq:threepoint_sk4} in Lemma~\ref{eq:threepoint} to Line 8 of Algorithm~\ref{alg:SPDHG} with the following instantiation
\begin{eqnarray}
\label{eq:threepoint_instance_lambdak1}
&\vs_k=-\ell_{\vg}(\bx_k;\bx_{k-1},\by_{k+1})-\theta_k \bar\bq_{k},\\
\label{eq:threepoint_instance_lambdak2}
&\vdelta_k=-\ell_{\vG}(\vx_k,\xi_k^3;\vx_{k-1},\vy_{k+1})+\ell_{\vg}(\vx_k;\vx_{k-1},\vy_{k+1})-\theta_k( \bq_{k}-\bar\bq_{k}),\\
\label{eq:threepoint_instance_lambdak3}
&\mathcal{W}=\mathbb{R}_+^m,~
\vw_0= \vlambda_0,\text{ and }
\eta_k=\gamma_k,
\end{eqnarray}
we have, for any $\vlambda\in\mathbb{R}_+^m$, that
\begin{align}
\label{eq:threepoint_lambdak}
\left\langle \vs_k, \vlambda_{k+1}-\vlambda\right\rangle
\leq &-\frac{\gamma_k}{4}\|\vlambda_{k+1}-\vlambda_k\|^2+
\frac{\gamma_k}{2}\|\vlambda-\vlambda_k\|^2-\frac{\gamma_k}{2}\|\vlambda-\vlambda_{k+1}\|^2\\\nonumber
&+\frac{\gamma_k}{2}\|\vlambda-\tilde\vlambda_k\|^2-\frac{\gamma_k}{2}\|\vlambda-\tilde\vlambda_{k+1}\|^2+\left\langle \vdelta_k, \tilde\vlambda_k-\vlambda_k\right\rangle+\frac{3}{2\gamma_k}\|\vdelta_k\|^2.
\end{align}

Recall \eqref{eq:qk}. Using the instance of $\vs_k$ in \eqref{eq:threepoint_instance_lambdak1}, we have 
\begin{eqnarray}
\label{eq:Q1sub1}
&&(\vlambda-\vlambda_{k+1})^{\top}\vg(\bx_{k+1},\by_{k+1})\\\nonumber
&=&\left\langle \vs_k, \vlambda_{k+1}-\vlambda\right\rangle+\langle \ell_{\vg}(\vx_{k+1};\vx_k,\vy_{k+1})-\vg(\bx_{k+1},\by_{k+1}),\vlambda_{k+1}-\vlambda\rangle \\\nonumber
&&-\langle \bar\bq_{k+1},\vlambda_{k+1}-\vlambda\rangle + \theta_k \langle \bar\bq_{k},\vlambda_{k}-\vlambda\rangle + \theta_k \langle \bar\bq_{k},\vlambda_{k+1}-\vlambda_{k}\rangle.
\end{eqnarray}
By Assumption~\ref{assume:continuity}.b, we have 
\begin{equation*}
    g_i(\vx_{k+1},\vy_{k+1}^i)-g_i(\vx_k,\vy_{k+1}^i)- \nabla_x g_i(\vx_k,\vy_{k+1}^i)^\top (\vx_{k+1}-\vx_k)\leq\frac{L_g^{xx}}{2} \|\vx_{k+1}-\vx_k\|^2+ H_g^{x} \|\vx_{k+1}-\vx_k\|.
\end{equation*}
Multiplying the inequality above by any $\lambda^i\geq 0$ and summing it for $i=1,\dots,m$ give
\begin{eqnarray}
\label{eq:Q0eq5}
\vlambda^\top \left[\vg(\vx_{k+1},\vy_{k+1})-\ell_{\vg}(\vx_{k+1};\vx_k,\vy_{k+1})\right]\leq \frac{L_g^{xx}\|\vlambda\|_1}{2}\|\vx_{k+1}-\vx_k\|^2+H_g^{x}\|\vlambda\|_1 \|\vx_{k+1}-\vx_k\|.
\end{eqnarray}
Applying this inequality to the right-hand side of \eqref{eq:Q1sub1} leads to 
\begin{align}
\label{eq:Q1sub2}
(\vlambda-\vlambda_{k+1})^{\top}\vg(\bx_{k+1},\by_{k+1})
\leq&\left\langle \vs_k, \vlambda_{k+1}-\vlambda\right\rangle+\langle \ell_{\vg}(\vx_{k+1};\vx_k,\vy_{k+1})-\vg(\bx_{k+1},\by_{k+1}),\vlambda_{k+1}\rangle \\\nonumber
&+\frac{L_g^{xx}\|\vlambda\|_1}{2}\|\vx_{k+1}-\vx_k\|^2+H_g^{x}\|\vlambda\|_1 \|\vx_{k+1}-\vx_k\| \\\nonumber
&-\langle \bar\bq_{k+1},\vlambda_{k+1}-\vlambda\rangle + \theta_k \langle \bar\bq_{k},\vlambda_{k}-\vlambda\rangle + \theta_k \langle \bar\bq_{k},\vlambda_{k+1}-\vlambda_{k}\rangle.
\end{align}
Recall definition \eqref{eq:ell} and the fact that Assumption~\ref{assume:continuity} d implies \eqref{eq:Lipgi}. According to \eqref{eq:qk}, we have
\small
\begin{align}
\nonumber
&\|\bar\bq_k\| \\\nonumber
=&\|\ell_{\vg}(\vx_{k};\vx_{k-1},\vy_{k})-\ell_{\vg}(\vx_{k-1};\vx_{k-2},\vy_{k})\|\\\nonumber
\leq &\sqrt{\sum_{i=1}^m\left[g_i(\vx_{k-1},\by_{k}^i)-g_i(\vx_{k-2},\by_{k}^i)+\nabla_xg_i(\vx_{k-1},\by_{k}^i)^\top(\vx_{k}-\vx_{k-1})-\nabla_xg_i(\vx_{k-2},\by_{k}^i)^\top(\vx_{k-1}-\vx_{k-2})\right]^2}\\\nonumber
\leq&\sqrt{\sum_{i=1}^m\left(M_g^x\|\vx_{k-1}-\vx_{k-2}\|+M_g^x\|\vx_{k}-\vx_{k-1}\|+M_g^x\|\vx_{k-1}-\vx_{k-2}\|\right)^2}
\\\label{eq:barqk_Youngineq}
= &\sqrt{m}\left(2M_g^x\|\vx_{k-1}-\vx_{k-2}\|+M_g^x\|\vx_{k}-\vx_{k-1}\|\right).
\end{align}
\normalsize
We next bound the term $\theta_k\langle \bar\bq_{k},\vlambda_{k+1}-\vlambda_{k}\rangle$ on the right-hand side of \eqref{eq:Q1sub2} as follows
\begin{eqnarray}
\label{eq:eq:Q1ub0}
&&\theta_k\langle \bar\bq_{k},\vlambda_{k+1}-\vlambda_{k}\rangle \\\nonumber
     &\leq& \frac{\gamma_k}{8}\|\vlambda_{k+1}-\vlambda_k\|^2+\frac{2\theta_k^2}{\gamma_k}\|\bar\bq_{k}\|^2\\\nonumber
     &\leq& \frac{\gamma_k}{8}\|\vlambda_{k+1}-\vlambda_k\|^2 + \frac{4m(\theta_kM_g^x)^2}{\gamma_k}\|\bx_k-\bx_{k-1}\|^2+\frac{16m(\theta_kM_g^x)^2}{\gamma_k}\|\bx_{k-1}-\bx_{k-2}\|^2,
\end{eqnarray}
where the first inequality is Young's inequality and the second inequality holds because of \eqref{eq:barqk_Youngineq} and Jensen's inequality.

Recall \eqref{eq:threepoint_instance_lambdak2} and the definition of $Q_1$ in \eqref{eq:Qs1}. Applying \eqref{eq:threepoint_lambdak} and \eqref{eq:eq:Q1ub0}  to \eqref{eq:Q1sub2} and organizing terms lead to
\begin{eqnarray}
\label{eq:Q1ub1}
        ~~~~&&Q_1(\bz_{k+1},\bz)=(\vlambda-\vlambda_{k+1})^{\top}\vg(\bx_{k+1},\by_{k+1})\\\nonumber
        &\leq & \langle\ell_{\vg}(\vx_{k+1};\vx_k,\vy_{k+1})-\vg(\vx_{k+1},\vy_{k+1}), \vlambda_{k+1}\rangle
        +\frac{L_g^{xx}\|\vlambda\|_1}{2}\|\vx_{k+1}-\vx_k\|^2+H_g^{x}\|\vlambda\|_1 \|\vx_{k+1}-\vx_k\|\\\nonumber
        &&-\langle \bar\bq_{k+1},\vlambda_{k+1}-\vlambda\rangle + \theta_k \langle \bar\bq_{k},\vlambda_{k}-\vlambda\rangle \\\nonumber
        && + \frac{\gamma_k}{2}\left[ \|\vlambda-\vlambda_k\|^2-\|\vlambda-\vlambda_{k+1}\|^2+\|\vlambda-\tilde\vlambda_k\|^2-\|\vlambda-\tilde\vlambda_{k+1}\|^2\right]\\\nonumber
        &&+ \frac{4m(\theta_kM_g^x)^2}{\gamma_k}\|\bx_k-\bx_{k-1}\|^2+\frac{16m(\theta_kM_g^x)^2}{\gamma_k}\|\bx_{k-1}-\bx_{k-2}\|^2-\frac{\gamma_k}{8}\|\vlambda_{k+1}-\vlambda_k\|^2 \\\nonumber
  &&+\langle\ell_{\vG}(\vx_k,\xi_k^3;\vx_{k-1},\vy_{k+1})-\ell_{\vg}(\vx_k;\vx_{k-1},\vy_{k+1})+\theta_k( \bq_{k}-\bar\bq_{k}),\vlambda_k-\tilde\vlambda_k\rangle\\\nonumber
&&+\frac{3}{2\gamma_k}\left\|\ell_{\vG}(\vx_k,\xi_k^3;\vx_{k-1},\vy_{k+1})-\ell_{\vg}(\vx_k;\vx_{k-1},\vy_{k+1})+\theta_k( \bq_{k}-\bar\bq_{k})\right\|^2
\end{eqnarray}
for any $\vlambda\geq0$.

\subsection{An upper bound of $Q_0$}
We first define an auxiliary sequence $\{\tilde \vx_k\}_{k\geq0}$, where $\tilde\vx_0=\vx_0$ and $\tilde\vx_k$ for $k\geq1$ are generated as 
\begin{eqnarray}
\label{eq:tildexk}
\tilde\bx_{k+1}=\argmin_{\bx\in\cX}-\langle \vdelta_k^6+(\vDelta_k^6)^\top\vlambda_{k+1}, \bx\rangle+\frac{\tau_k}{2}\|\bx-\tilde\bx_k\|^2,
\end{eqnarray}
where $ \vdelta_k^6$ and $ \vDelta_k^6$ are defined in \eqref{eqn:deltas} and $\tau_k$ is as in Algorithm~\ref{alg:SPDHG}. 
Applying conclusion \eqref{eq:threepoint_sk1} of Lemma~\ref{eq:threepoint} to Line 9 of Algorithm~\ref{alg:SPDHG} with the following instantiation
\begin{eqnarray}
\label{eq:threepoint_instance_xk1}
&\vs_k=\nabla f(\vx_k)+\nabla_x\vg(\vx_k,\vy_{k+1})^\top \vlambda_{k+1},\\
\label{eq:threepoint_instance_xk2}
&\vdelta_k= \vdelta_k^6+(\vDelta_k^6)^\top\vlambda_{k+1},\\
\label{eq:threepoint_instance_xk3}
&\mathcal{W}=\cX,~
\vw_0= \vx_0, \text{ and }
\eta_k=\tau_k,
\end{eqnarray}
we have, for any $\vx\in\cX$, that
\begin{eqnarray}
\label{eq:threepoint_xk_old}
\left\langle \vs_k+\vdelta_k, \vx_{k+1}-\vx\right\rangle&\leq &-\frac{\tau_k}{2}\|\vx_{k+1}-\vx_k\|^2+
\frac{\tau_k}{2}\|\vx-\vx_k\|^2-\frac{\tau_k}{2}\|\vx-\vx_{k+1}\|^2.
\end{eqnarray}
By Young's inequality, \eqref{eq:threepoint_xk} further implies
\begin{eqnarray}
\nonumber
\left\langle \vs_k, \vx_{k+1}-\vx\right\rangle&\leq &-\frac{\tau_k}{2}\|\vx_{k+1}-\vx_k\|^2+
\frac{\tau_k}{2}\|\vx-\vx_k\|^2-\frac{\tau_k}{2}\|\vx-\vx_{k+1}\|^2\\\nonumber
&&-\left\langle \vdelta_k, \vx_{k+1}-\vx_k\right\rangle-\left\langle \vdelta_k, \vx_k-\vx\right\rangle\\\nonumber
&\leq &-\frac{\tau_k}{4}\|\vx_{k+1}-\vx_k\|^2+
\frac{\tau_k}{2}\|\vx-\vx_k\|^2-\frac{\tau_k}{2}\|\vx-\vx_{k+1}\|^2\\\label{eq:threepoint_xk}
&&+\frac{\| \vdelta_k\|}{\tau_k}+\left\langle \vdelta_k, \vx-\vx_k\right\rangle.
\end{eqnarray}

By $\mu_f$-strong convexity of $f$, it holds, for any $\vx\in\cX$, that
\begin{equation}
\label{eq:Q0eq1}
\langle \nabla f(\vx_{k}),\bx -\bx_{k} \rangle \leq f(\vx)-f(\vx_{k}) -\frac{\mu_f}{2}\|\bx_{k}-\bx\|^2.
\end{equation}
By convexity of $g_i$ in $\bx$, it holds, for any $\vx\in\cX$, that
\begin{equation*}
   0\leq g_i(\vx,\vy_{k+1}^i)- g_i(\vx_k,\vy_{k+1}^i)-\nabla_x g_i(\vx_k,\vy_{k+1}^i)^\top (\vx-\vx_k).
\end{equation*}
Multiplying the inequality above by $\lambda_{k+1}^i\geq 0$ and summing it for $i=1,\dots,m$ give
\begin{eqnarray}
\label{eq:Q0eq2}
   0&\leq &\vlambda_{k+1}^\top(\vg(\bx,\by_{k+1})-\vg(\bx_k,\by_{k+1}))-\langle \nabla_x\vg(\vx_k,\vy_{k+1})^\top \vlambda_{k+1},\bx-\bx_k \rangle.
\end{eqnarray}

By \eqref{eq:Q0eq1}, \eqref{eq:Q0eq2} and the instance of $\vs_k$ in \eqref{eq:threepoint_instance_xk1}, we have 
\begin{eqnarray}
\nonumber
&&f(\bx_{k+1})-f(\bx)+\vlambda_{k+1}^\top(\vg(\bx_{k+1},\by_{k+1})-\vg(\bx,\by_{k+1}))\\\nonumber
&=&f(\bx_{k+1})-f(\bx)+\vlambda_{k+1}^\top(\vg(\bx_{k+1},\by_{k+1})-\vg(\bx,\by_{k+1}))\\\nonumber
&&+\left\langle \vs_k, \vx_{k+1}-\vx\right\rangle-\left\langle \nabla f(\vx_k)+\nabla_x\vg(\vx_k,\vy_{k+1})^\top \vlambda_{k+1}, \vx_{k+1}-\vx\right\rangle\\\nonumber
&\leq&f(\bx_{k+1})-f(\bx_k)+\vlambda_{k+1}^\top(\vg(\bx_{k+1},\by_{k+1})-\vg(\bx,\by_{k+1}))-\frac{\mu_f}{2}\|\bx_k-\bx\|^2\\\nonumber
&&+\left\langle \vs_k, \vx_{k+1}-\vx\right\rangle-\left\langle\nabla_x\vg(\vx_k,\vy_{k+1})^\top \vlambda_{k+1}, \vx_{k+1}-\vx\right\rangle -\left\langle \nabla f(\vx_k), \vx_{k+1}-\vx_k\right\rangle\\\label{eqn:tri_x_1}
&\leq&f(\bx_{k+1})-f(\bx_k)+\vlambda_{k+1}^\top(\vg(\bx_{k+1},\by_{k+1})-\vg(\bx_k,\by_{k+1}))-\frac{\mu_f}{2}\|\bx_k-\bx\|^2\\\nonumber
&&+\left\langle \vs_k, \vx_{k+1}-\vx\right\rangle-\left\langle\nabla_x\vg(\vx_k,\vy_{k+1})^\top \vlambda_{k+1}, \vx_{k+1}-\vx_k\right\rangle-\left\langle \nabla f(\vx_k), \vx_{k+1}-\vx_k\right\rangle,
\end{eqnarray}
where the first inequality is by \eqref{eq:Q0eq1} and the second is by \eqref{eq:Q0eq2}. By Assumption~\ref{assume:continuity} a, \eqref{eqn:tri_x_1} further implies
\begin{align}
\nonumber
&f(\bx_{k+1})-f(\bx)+\vlambda_{k+1}^\top(\vg(\bx_{k+1},\by_{k+1})-\vg(\bx,\by_{k+1}))\\\nonumber
\leq&\vlambda_{k+1}^\top(\vg(\bx_{k+1},\by_{k+1})-\vg(\bx_k,\by_{k+1}))-\frac{\mu_f}{2}\|\bx_k-\bx\|^2\\\nonumber
&+\left\langle \vs_k, \vx_{k+1}-\vx\right\rangle-\left\langle\nabla_x\vg(\vx_k,\vy_{k+1})^\top \vlambda_{k+1}, \vx_{k+1}-\vx_k\right\rangle+\frac{L_f}{2}\|\vx_{k+1}-\vx_{k}\|^2+H_f\|\vx_{k+1}-\vx_{k}\|\\\label{eqn:tri_x_2}
=&\langle\vg(\bx_{k+1},\by_{k+1})-\ell_{\vg}(\vx_{k+1};\vx_k,\vy_{k+1}), \vlambda_{k+1}\rangle-\frac{\mu_f}{2}\|\bx_k-\bx\|^2\\\nonumber
&+\left\langle \vs_k, \vx_{k+1}-\vx\right\rangle+\frac{L_f}{2}\|\vx_{k+1}-\vx_{k}\|^2+H_f\|\vx_{k+1}-\vx_{k}\|,
\end{align}
where the equality is because of  \eqref{eq:ell}.

Recall \eqref{eq:threepoint_instance_xk2} and the definition of $Q_0$ in \eqref{eq:Qs0}. Applying \eqref{eq:threepoint_xk}  to \eqref{eqn:tri_x_2} gives
\begin{eqnarray}
\label{eq:Q0eq4}
        &&Q_0(\bz_{k+1},\bz)= f(\bx_{k+1})-f(\bx)+\vlambda_{k+1}^\top(\vg(\bx_{k+1},\by_{k+1})-\vg(\bx,\by_{k+1}))  \\\nonumber
        &\leq & \langle\vg(\bx_{k+1},\by_{k+1})-\ell_{\vg}(\vx_{k+1};\vx_k,\vy_{k+1}), \vlambda_{k+1}\rangle\\\nonumber
        && -\frac{\mu_f}{2}\|\bx-\bx_{k}\|^2+ \left(\frac{L_f}{2}-\frac{\tau_k}{4}\right)\|\vx_{k+1}-\vx_{k}\|^2+H_f\|\vx_{k+1}-\vx_{k}\|\\\nonumber
        &&+ \frac{\tau_k}{2}\left[ \|\bx-\bx_k\|^2-\|\bx-\bx_{k+1}\|^2 \right]\\\nonumber
         &&+\langle \vdelta_k^6+(\vDelta_k^6)^\top\vlambda_{k+1},  \bx- \bx_k\rangle+\frac{1}{\tau_k}\left\|\vdelta_k^6+(\vDelta_k^6)^\top\vlambda_{k+1}\right\|^2.
\end{eqnarray}



\subsection{Summing up the upper bounds}
Recall that \eqref{eq:Q2ub1_old}, \eqref{eq:Q1ub1} and \eqref{eq:Q0eq4} provide an upper bound on $Q_2(\bz_{k+1},\bz)$, $Q_1(\bz_{k+1},\bz)$ and $Q_0(\bz_{k+1},\bz)$, respectively. In light of \eqref{eq:Qs}, adding these three inequalities and reorganizing terms lead to
\begin{eqnarray}
\label{eq:Qsum1}
&&\phi(\bx_{k+1},\vlambda,\by) - \phi(\bx,\vlambda_{k+1},\by_{k+1})\\\nonumber
     &\leq & \mathcal{T}_{\text{crossterm},k}(\vlambda,\by) +\mathcal{T}_{\text{change},k}(\by)+\mathcal{T}_{\text{distance},k}(\bx,\vlambda,\by)
     +\mathcal{T}_{\text{bias},k}(\bx,\vlambda)+\mathcal{T}_{\text{variance},k}(\vlambda).
\end{eqnarray}
Here, 
\begin{eqnarray}
\label{eq:crosstermk}
\mathcal{T}_{\text{crossterm},k}(\by,\vlambda)
&:=& -\langle \bar\bq_{k+1},\vlambda_{k+1}-\vlambda\rangle+ \theta_k \langle \bar\bq_{k},\vlambda_{k}-\vlambda\rangle \\\nonumber
&& -\vlambda^\top\bar\bP_{k+1} (\by_{k+1}-\by) +\theta_k\vlambda^\top\bar\bP_{k} (\by_{k}-\by),
\end{eqnarray}
\begin{eqnarray}
\label{eq:change}
\mathcal{T}_{\text{change},k}(\vlambda)
&:=& \left[\frac{4m(\theta_kM_g^x)^2}{\gamma_k}+\frac{ 4(\theta_kL^{yx}_g)^2\|\vlambda\|_1}{\sigma_k}\right]\|\bx_k-\bx_{k-1}\|^2\\\nonumber
&&+\frac{16m(\theta_kM_g^x)^2}{\gamma_k}\|\bx_{k-1}-\bx_{k-2}\|^2+ \left[\frac{L_f}{2}+\frac{L_g^{xx}\|\vlambda\|_1}{2}-\frac{\tau_k}{4}\right]\|\vx_{k+1}-\vx_{k}\|^2\\\nonumber
&& - \frac{\gamma_k}{8}\|\vlambda_{k+1}-\vlambda_k\|^2+\frac{4(\theta_kL^{yy}_g)^2}{\sigma_k}\|\vy_{k}-\vy_{k-1}\|_{\vlambda}^2-\frac{\sigma_k}{8}\|\vy_{k+1}-\vy_k\|_{\vlambda}^2\\\nonumber
&&+\left[H_g^{x}\|\vlambda\|_1 +H_f\right]\|\vx_{k+1}-\vx_{k}\|+\theta_kH_g^y \|\vlambda\|_1^{\frac{1}{2}}\|\vy_{k+1}-\vy_{k}\|_\vlambda,
\end{eqnarray}
\begin{eqnarray}
\label{eq:distance}
\mathcal{T}_{\text{distance},k}(\bx,\by,\vlambda)
&:=& \frac{\tau_k-\mu_f}{2}\|\bx-\bx_k\|^2-\frac{\tau_k}{2}\|\bx-\bx_{k+1}\|^2\\\nonumber
&&+\frac{\gamma_k}{2}\left[\|\vlambda-\vlambda_k\|^2- \|\vlambda-\vlambda_{k+1}\|^2\right] +\frac{\gamma_k}{2}\left[ \|\vlambda-\tilde\vlambda_k\|^2-\|\vlambda-\tilde\vlambda_{k+1}\|^2\right]\\\nonumber
&& +\frac{\sigma_k}{2}\|\vy-\vy_k\|_{\vlambda}^2-\frac{\sigma_k+\mu_y}{2}\|\vy-\vy_{k+1}\|_{\vlambda}^2+\frac{\sigma_k}{2}\left[ \|\vy-\tilde\vy_k\|_{\vlambda}^2-\|\vy-\tilde\vy_{k+1}\|_{\vlambda}^2\right],
\end{eqnarray}
\begin{eqnarray}
\label{eq:zeromean}
\mathcal{T}_{\text{bias},k}(\bx,\vlambda)
&:=&\langle \vdelta_k^6+(\vDelta_k^6)^\top\vlambda_{k+1}, \bx- \bx_k\rangle+ \vlambda^\top\left[\vDelta_k^1+\theta_k \left(\vDelta_k^1-\vDelta_k^2\right)\right](\vy_k-\tilde\vy_k)\\\nonumber
&&+\langle\ell_{\vG}(\vx_k,\xi_k^3;\vx_{k-1},\vy_{k+1})-\ell_{\vg}(\vx_k;\vx_{k-1},\vy_{k+1})+\theta_k( \bq_{k}-\bar\bq_{k}),\vlambda_k-\tilde\vlambda_k\rangle,
\end{eqnarray}
and
\begin{eqnarray}
\label{eq:variance}
\mathcal{T}_{\text{variance},k} (\vlambda)
&:=&\frac{1}{\tau_k}\left\|\vdelta_k^6+(\vDelta_k^6)^\top\vlambda_{k+1}\right\|^2 +\frac{3}{2\sigma_k}\left\|\vDelta_k^1+\theta_k \left(\vDelta_k^1-\vDelta_k^2\right)\right\|_{\vlambda}^2\\\nonumber
&&+\frac{3}{2\gamma_k}\left\|\ell_{\vG}(\vx_k,\xi_k^3;\vx_{k-1},\vy_{k+1})-\ell_{\vg}(\vx_k;\vx_{k-1},\vy_{k+1})+\theta_k( \bq_{k}-\bar\bq_{k})\right\|^2.
\end{eqnarray}

Suppose the gradient oracles are deterministic, namely, all quantities in \eqref{eqn:deltas} are zeros. In this case,
\begin{equation}
\label{eq:zeromean_and_variance_deterministic}
\mathcal{T}_{\text{bias},k}(\bx,\vlambda)=\mathcal{T}_{\text{variance},k}(\vlambda)=0,
\end{equation}
and, by the definitions of $\tilde\by_k$, $\tilde\vlambda_k$ and $\tilde\bx_k$ in \eqref{eq:tildeyk}, \eqref{eq:tildelambdak} and \eqref{eq:tildexk}. We also have 
\begin{equation}
\label{eq:tildetermzero}
\tilde\bx_k=\bx_0,\quad\tilde\by_k=\by_0,\text{ and }\tilde\vlambda_k=\vlambda_0, \quad \forall k\geq0.
\end{equation}

Recall that the output of Algorithms~\ref{alg:PDHG} and \ref{alg:SPDHG} is $\bar\vx_{K}=\frac{1}{\sum_{k=0}^{K-1}t_k}\sum_{k=0}^{K-1}t_k\vx_{k+1}$. 
By Assumptions~\ref{assume:basic} c and d, we have
\small
\begin{align}
\nonumber
&f(\bar\vx_{K})+\vlambda^\top \vg(\bar\vx_{K},\by)-f(\bx^*)-\frac{1}{\sum_{k=0}^{K-1}t_k}\sum_{k=0}^{K-1}t_k\vlambda_{k+1}^\top \vg^*(\bx^*)\\\nonumber
\leq&f(\bar\vx_{K})+\vlambda^\top \vg(\bar\vx_{K},\by)-f(\bx^*)-\frac{1}{\sum_{k=0}^{K-1}t_k}\sum_{k=0}^{K-1}t_k\vlambda_{k+1}^\top \vg(\bx^*,\by_{k+1})\\\label{eq:Qsum1_averaged}
\leq&\frac{1}{\sum_{k=0}^{K-1}t_k}\sum_{k=0}^{K-1}t_k[\phi(\bx_{k+1},\vlambda,\by) - \phi(\bx^*,\vlambda_{k+1},\by_{k+1})]
\end{align}
\normalsize
for any $\vlambda\in\mathbb{R}^m_+$ and any $\vy\in\mathcal{Y}$, where the first inequality is because $\vg^*(\bx^*)\geq \vg(\bx^*,\by_{k+1})$ and $\vlambda_{k+1}\geq 0$, and the second is because of Assumptions~\ref{assume:basic} c and d.

Since $\vg^*(\bx^*)\leq \mathbf{0}$, choosing $\vlambda=\vlambda_0=\mathbf{0}$ and $\by=\by_0$ in \eqref{eq:Qsum1_averaged} leads to an upper bound on the optimality gap of  $\bar\vx_{K}$, that is, 
\small
\begin{align}
\label{eq:Qsum_averaged_obj}
f(\bar\vx_{K})-f(\bx^*)\leq\frac{1}{\sum_{k=0}^{K-1}t_k}\sum_{k=0}^{K-1}t_k[\phi(\bx_{k+1},\mathbf{0},\by_0) - \phi(\bx^*,\vlambda_{k+1},\by_{k+1})].
\end{align}
\normalsize
Let $\vlambda^*$ be the optimal multiplier corresponding to $\bx^*$ in Assumption~\ref{assume:KKT}. By the definitions of $\bx^*$ and $\vlambda^*$, we have
\begin{align}
\label{eq:choiceinQ2_prepare1}
0\leq f(\bar\vx_{K})+(\vlambda^*)^\top \vg^*(\bar\vx_{K})-f(\bx^*)-(\vlambda^*)^\top \vg^*(\bx^*)=f(\bar\vx_{K})+(\vlambda^*)^\top \vg^*(\bar\vx_{K})-f(\bx^*).
\end{align}
Let 
\begin{align}
\label{eq:lambda_for_barxK}
\widehat\vlambda_K:=\argmax_{\vlambda}\left\{(\vlambda-\vlambda^*)^\top\vg^*(\bar\vx_{K})\big|\vlambda\geq\vlambda^*,\|\vlambda-\vlambda^*\|_1\leq 1\right\}
\end{align}
and $\widehat\by_K=(\widehat\by_K^{1\top},\dots,\widehat\by_K^{m\top})^\top\in \cY$, where
\begin{align}
\label{eq:y_for_barxK}
\widehat\by^i_K\in\argmax_{\by^i\in\mathcal{Y}^i}g_i(\bar\vx_{K},\by^i),~i=1,\dots,m.
\end{align}
It can be easily verified that
\begin{align}
\label{eq:lambda_for_barxK_bound}
\|\widehat\vlambda_K\|_1\leq \|\vlambda^*\|_1+1,
\end{align}
\begin{align}
\label{eq:lambda_for_barxK_property}
(\widehat\vlambda_K)^\top\vg^*(\bar\vx_{K})=\|[\vg^*(\bar\vx_{K})]_+\|_\infty+(\vlambda^*)^\top\vg^*(\bar\vx_{K})
\end{align}
and
\begin{align}
\label{eq:y_for_barxK_property}
\vg(\bar\vx_{K},\widehat\by_K)=\vg^*(\bar\vx_{K}).
\end{align}
Let $\by^i=\widehat\by^i_K$ for $i=1,\dots,m$ and $\vlambda=\widehat\vlambda_K$ in \eqref{eq:Qsum1_averaged}. We obtain an upper bound on the infeasibility of  $\bar\vx_{K}$, that is, 
\small
\begin{align}
\nonumber
\|[\vg^*(\bar\vx_{K})]_+\|_\infty
=&(\widehat\vlambda_K)^\top\vg^*(\bar\vx_{K})-(\vlambda^*)^\top\vg^*(\bar\vx_{K})\\\nonumber
\leq&f(\bar\vx_{K})+\widehat\vlambda_K^\top \vg(\bar\vx_{K},\widehat\by_K)-f(\bx^*)-\frac{1}{\sum_{k=0}^{K-1}t_k}\sum_{k=0}^{K-1}t_k\vlambda_{k+1}^\top \vg^*(\bx^*)\\\label{eq:Qsum_averaged_cst}
\leq&\frac{1}{\sum_{k=0}^{K-1}t_k}\sum_{k=0}^{K-1}t_k[\phi(\bx_{k+1},\widehat\vlambda_K,\widehat\by_K) - \phi(\bx^*,\vlambda_{k+1},\by_{k+1})],
\end{align}
\normalsize
where the equality is from \eqref{eq:lambda_for_barxK_property}, the first inequality is because of \eqref{eq:choiceinQ2_prepare1}, \eqref{eq:y_for_barxK_property} and the fact that $\vg^*(\bx^*)\leq \mathbf{0}$, and the last inequality is from \eqref{eq:Qsum1_averaged}.

\section{Convergence Analysis}
In this section, we present our main theoretical results on the convergence rate of Algorithms~\ref{alg:PDHG} and~\ref{alg:SPDHG} under different settings of $f$ and $\vg$.  In our analysis, we use $\mathbb{E}_k$ to denote the conditional expectation conditioning on $(\xi_l^i)_{i=1}^6$ for $l=0,1,\dots,k$.  Motivated by \eqref{eq:Qsum1}, \eqref{eq:Qsum_averaged_obj} and \eqref{eq:Qsum_averaged_cst}, we will first provide an upper bound of the summation of $\mathcal{T}_{\text{crossterm},k}$,  $\mathcal{T}_{\text{change},k}$, $\mathcal{T}_{\text{distance},k}$, $\mathcal{T}_{\text{bias},k}$ and $\mathcal{T}_{\text{variance},k}$ over $k=0,1,\dots,K-1$ in the next proposition, which is the key step to derive the convergence rates of the optimality gap and infeasibility of  $\bar\vx_{K}$.

\begin{proposition}
\label{thm:mainprop_deterministic}
Suppose  parameters $t_k$, $\theta_k$, $\tau_k$, $\sigma_k$ and $\gamma_k$ in Algorithm~\ref{alg:SPDHG} satisfies the following conditions
\small
\begin{align}
\label{eq:thetaktk}
&\theta_{k+1}t_{k+1}=t_k\\
\label{eq:determine_cond2}
&\frac{t_kL_f}{2}+\frac{6mt_{k+1}\theta_{k+1}^2(M_g^x)^2}{\gamma_{k+1}}+\frac{24mt_{k+2}\theta_{k+2}^2(M_g^x)^2}{\gamma_{k+2}}+\frac{18m\sigma_{g'}^2t_{k+1}}{\gamma_{k+1}}+\frac{12m\sigma_{g'}^2t_{k+2}}{\gamma_{k+2}}- \frac{t_k\tau_k}{16}\leq0,\\
\label{eq:determine_cond3}
&\frac{7t_{k+1}(\theta_{k+1}L^{yy}_g)^2}{\sigma_{k+1}}
- \frac{t_k\sigma_k}{8}\leq0,\\
\label{eq:determine_cond4_a}
&  t_{k+1}\tau_{k+1}-t_{k+1}\mu_f-t_{k}\tau_{k}\leq0,\\
\label{eq:determine_cond4_b}
& t_{k+1}\sigma_{k+1}- t_k\sigma_k-t_k\mu_y\leq 0,\\
\label{eq:determine_cond4_c}
&t_{k+1}\gamma_{k+1}-t_k\gamma_k+\frac{8m\sigma_{g'}^2t_k}{\tau_k} \leq 0
\end{align}
\normalsize
for $k=0,1,\dots,K$. It holds, for any deterministic $\bx\in\cX$, stochastic $\by\in\cY$ and stochastic $\vlambda\in\mathbb{R}_+^m$ satisfying $\|\vlambda\|_1\leq \Lambda$ almost surely, that
\small
\begin{align}
\nonumber
&\mathbb{E}\sum_{k=0}^{K-1}t_k[\phi(\bx_{k+1},\vlambda,\by) - \phi(\bx,\vlambda_{k+1},\by_{k+1})]\\\label{eq:Qsum5}
\leq&\frac{t_0\tau_0-t_0\mu_f}{2}\|\bx-\bx_0\|^2+t_0\gamma_0\mathbb{E}\|\vlambda-\vlambda_0\|^2+t_0\sigma_0\mathbb{E}\|\vy-\vy_0\|_{\vlambda}^2+\mathbb{E}\mathcal{T}_{\Lambda,K}\\\nonumber
&+\sum_{k=0}^{K-1}\frac{t_{k+1}\gamma_{k+1}-t_k\gamma_k}{2}\mathbb{E}\|\vlambda-\tilde\vlambda_{k+1}\|^2  +\sum_{k=0}^{K-1}\frac{t_{k+1}\sigma_{k+1}-t_k\sigma_k}{2}\mathbb{E}\|\vy-\tilde\vy_{k+1}\|_{\vlambda}^2\\\nonumber
&-\frac{t_K\gamma_K}{2}\mathbb{E}\|\vlambda-\tilde\vlambda_K\|^2-\frac{t_K\sigma_K}{2}\mathbb{E}\|\vy-\tilde\vy_K\|_{\vlambda}^2  +4\left[(H_g^{x})^2\Lambda^2+(H_f)^2\right]\sum_{k=0}^{K-1}\frac{t_k}{\tau_k}+7(H_g^y)^2\Lambda\sum_{k=0}^{K}\frac{t_k\theta_k^2}{\sigma_k}\\\nonumber
&+\Lambda
\sqrt{10m\sigma_{g'}^2D_y^2\sum_{k=0}^{K-1}t_k^2}+\left(2\sigma_{f'}^2+4\sigma_{g'}^2\Lambda^2\right)\sum_{k=0}^{K-1}\frac{t_k}{\tau_k}+15m\Lambda\sigma_{g'}^2\sum_{k=0}^{K-1}\frac{t_k}{\sigma_k}+30m\sigma_g^2\sum_{k=0}^{K-1}\frac{t_k}{\gamma_k}
\end{align}
\normalsize
where 
\small
\begin{eqnarray}
\label{eq:TK}
\mathcal{T}_{\Lambda,K}:=\sum_{k=0}^{K-1}\left[\frac{ 7t_{k+1}(\theta_{k+1}L^{yx}_g)^2\Lambda}{\sigma_{k+1}}+\frac{t_kL_g^{xx}\Lambda}{2}-\frac{t_k\tau_k}{16}\right]\mathbb{E}\|\bx_{k+1}-\bx_k\|^2.
\end{eqnarray}
\normalsize
\end{proposition}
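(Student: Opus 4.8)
The plan is to start from \eqref{eq:Qsum1}, multiply by $t_k\ge 0$, sum over $k=0,\dots,K-1$, take expectations, and then reorganize the five aggregated families $\sum_k t_k\mathcal T_{\text{crossterm},k}$, $\sum_k t_k\mathcal T_{\text{change},k}$, $\sum_k t_k\mathcal T_{\text{distance},k}$, $\sum_k t_k\mathcal T_{\text{bias},k}$, $\sum_k t_k\mathcal T_{\text{variance},k}$ into the closed form \eqref{eq:Qsum5}; each family has a prescribed role and the six conditions \eqref{eq:thetaktk}--\eqref{eq:determine_cond4_c} are exactly what makes the accounting close. For the cross term, condition \eqref{eq:thetaktk} turns $t_k\theta_k\langle\bar\bq_k,\vlambda_k-\vlambda\rangle$ into $t_{k-1}\langle\bar\bq_k,\vlambda_k-\vlambda\rangle$ (and likewise for the $\bar\bP_k$ pieces), so the weighted sum telescopes to the single boundary term $-t_{K-1}\langle\bar\bq_K,\vlambda_K-\vlambda\rangle-t_{K-1}\vlambda^\top\bar\bP_K(\by_K-\by)$, the $k=0$ endpoint vanishing since $\bar\bq_0=\bar\bP_0=\mathbf 0$ by \eqref{eq:barqpP0}. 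I would then split $\vlambda_K-\vlambda$ and $\by_K-\by$ around $\vlambda_{K-1},\by_{K-1}$, apply Young's inequality, bound $\|\bar\bq_K\|$ via \eqref{eq:barqk_Youngineq} and $\|\bar\bP_K\|$ via Assumption~\ref{assume:continuity}c, and use $\|\vlambda\|_1\le\Lambda$ and the diameter $D_y$; the resulting quadratics in $\|\vx_K-\vx_{K-1}\|$, $\|\vx_{K-1}-\vx_{K-2}\|$, $\|\vy_K-\vy_{K-1}\|_\vlambda$, $\|\vlambda_K-\vlambda_{K-1}\|$, $\|\vlambda-\vlambda_{K-1}\|$, $\|\by-\by_{K-1}\|_\vlambda$ are absorbed into the boundary copies of the negative quadratics produced in the next steps, and the leftover constants feed the $H$-type residuals of \eqref{eq:Qsum5}.

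Next, for $\sum_k t_k\mathcal T_{\text{change},k}$ I would reindex the ``look-back'' quadratics: $\|\bx_k-\bx_{k-1}\|^2$ and $\|\bx_{k-1}-\bx_{k-2}\|^2$ carried by $t_k$ become contributions to $\|\vx_{j+1}-\vx_j\|^2$ from the weights $t_{j+1},t_{j+2}$, and $\|\vy_k-\vy_{k-1}\|_\vlambda^2$ feeds $\|\vy_{j+1}-\vy_j\|_\vlambda^2$; collecting these against the same-index terms and bounding $\|\vlambda\|_1\le\Lambda$, conditions \eqref{eq:determine_cond2} and \eqref{eq:determine_cond3} make the aggregate coefficients nonpositive up to the residual $\mathcal T_{\Lambda,K}$ in \eqref{eq:TK}. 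Here the $-\tfrac{\tau_k}{4}\|\vx_{k+1}-\vx_k\|^2$ and $-\tfrac{\sigma_k}{8}\|\vy_{k+1}-\vy_k\|_\vlambda^2$ budgets are split into several pieces — part satisfying the conditions, part kept in $\mathcal T_{\Lambda,K}$, part used to Young away the first-order terms $(H_g^x\|\vlambda\|_1+H_f)\|\vx_{k+1}-\vx_k\|$ and $\theta_kH_g^y\|\vlambda\|_1^{1/2}\|\vy_{k+1}-\vy_k\|_\vlambda$ (producing the $[(H_g^x)^2\Lambda^2+H_f^2]\sum_k\tfrac{t_k}{\tau_k}$ and $(H_g^y)^2\Lambda\sum_k\tfrac{t_k\theta_k^2}{\sigma_k}$ terms), part reserved for the boundary absorption above — while $-\tfrac{\gamma_k}{8}\|\vlambda_{k+1}-\vlambda_k\|^2$ is kept for that same absorption. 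Then $\sum_k t_k\mathcal T_{\text{distance},k}$ telescopes along the $\bx$-, $\vlambda$-, $\tilde\vlambda$-, $\by$- and $\tilde\by$-chains: \eqref{eq:determine_cond4_a}, \eqref{eq:determine_cond4_b}, \eqref{eq:determine_cond4_c} make the consecutive-coefficient differences nonpositive (the $\vlambda$-chain strictly so, with slack $\tfrac{8m\sigma_{g'}^2t_k}{\tau_k}$ reserved for the variance step), $\tilde\vlambda_0=\vlambda_0$ and $\tilde\vy_0=\vy_0$ give the initial $t_0\gamma_0\|\vlambda-\vlambda_0\|^2+t_0\sigma_0\|\vy-\vy_0\|_\vlambda^2+\tfrac{t_0\tau_0-t_0\mu_f}{2}\|\bx-\bx_0\|^2$, the $\tilde\vlambda,\tilde\vy$ remainders give the $\sum_k\tfrac{t_{k+1}\gamma_{k+1}-t_k\gamma_k}{2}\|\vlambda-\tilde\vlambda_{k+1}\|^2$-type terms, and the terminal negatives give $-\tfrac{t_K\gamma_K}{2}\|\vlambda-\tilde\vlambda_K\|^2-\tfrac{t_K\sigma_K}{2}\|\vy-\tilde\vy_K\|_\vlambda^2$.

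For $\sum_k t_k\mathcal T_{\text{variance},k}$ I would take expectations and use Assumption~\ref{assume:oracle} with the conditional independence of $\xi_k^1,\dots,\xi_k^6$: $\mathbb E\|\vdelta_k^6\|^2\le\sigma_{f'}^2$, $\mathbb E\|\vDelta_k^1+\theta_k(\vDelta_k^1-\vDelta_k^2)\|_\vlambda^2\lesssim m\Lambda\sigma_{g'}^2$, and $\mathbb E\|(\vDelta_k^6)^\top\vlambda_{k+1}\|^2\le\sigma_{g'}^2\mathbb E\|\vlambda_{k+1}\|_1^2\le 2m\sigma_{g'}^2\mathbb E\|\vlambda_{k+1}-\vlambda\|^2+2\Lambda^2\sigma_{g'}^2$; the error $\ell_{\vG}-\ell_{\vg}+\theta_k(\bq_k-\bar\bq_k)$ expands into $\vdelta_k^3,\vDelta_k^3(\vx_k-\vx_{k-1}),\vdelta_k^4,\dots$ contributing $m\sigma_g^2$ and $m\sigma_{g'}^2\|\vx_k-\vx_{k-1}\|^2$-type pieces. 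The $\|\vlambda_{k+1}-\vlambda\|^2$ piece is cancelled by the slack reserved in \eqref{eq:determine_cond4_c}, the $\|\vx_k-\vx_{k-1}\|^2$ pieces by the $\tfrac{18m\sigma_{g'}^2t_{k+1}}{\gamma_{k+1}}+\tfrac{12m\sigma_{g'}^2t_{k+2}}{\gamma_{k+2}}$ slots in \eqref{eq:determine_cond2}, and what is left are the $(2\sigma_{f'}^2+4\sigma_{g'}^2\Lambda^2)\sum_k\tfrac{t_k}{\tau_k}$, $15m\Lambda\sigma_{g'}^2\sum_k\tfrac{t_k}{\sigma_k}$, $30m\sigma_g^2\sum_k\tfrac{t_k}{\gamma_k}$ terms. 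Finally, for $\sum_k t_k\mathcal T_{\text{bias},k}$, the sample-splitting in Algorithm~\ref{alg:SPDHG} (or its three-sample form \eqref{eq:threesamples}) makes the noise in the $\bx$- and $\vlambda$-updates independent of everything multiplying it that is measurable with respect to the past, so $\mathbb E\langle\vdelta_k^6+(\vDelta_k^6)^\top\vlambda_{k+1},\bx-\bx_k\rangle=0$ and $\mathbb E\langle\ell_{\vG}(\vx_k,\xi_k^3;\cdot)-\ell_{\vg}(\vx_k;\cdot)+\theta_k(\bq_k-\bar\bq_k),\vlambda_k-\tilde\vlambda_k\rangle=0$ (using that $\bx$ is deterministic and $\bx_k,\vlambda_{k+1},\vlambda_k,\tilde\vlambda_k,\vx_k-\vx_{k-1},\vx_{k-1}-\vx_{k-2}$ are past-measurable). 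The one surviving piece $\sum_k t_k\,\vlambda^\top[\vDelta_k^1+\theta_k(\vDelta_k^1-\vDelta_k^2)](\vy_k-\tilde\vy_k)$ does not have zero mean because $\vlambda$ (e.g.\ the $\widehat\vlambda_K$ used later for infeasibility) is chosen adaptively; I would pull $\vlambda$ out pointwise by $\vlambda^\top z\le\|\vlambda\|_1\|z\|_\infty\le\Lambda\|z\|_2$, observe that $z=\sum_k t_k[\vDelta_k^1+\theta_k(\vDelta_k^1-\vDelta_k^2)](\vy_k-\tilde\vy_k)$ is a martingale-difference sum (since $\vy_k-\tilde\vy_k$ is past-measurable), and bound $\mathbb E\|z\|_2\le\sqrt{\mathbb E\|z\|_2^2}=\sqrt{\sum_k t_k^2\,\mathbb E\|[\vDelta_k^1+\theta_k(\vDelta_k^1-\vDelta_k^2)](\vy_k-\tilde\vy_k)\|^2}\le\sqrt{10m\sigma_{g'}^2D_y^2\sum_k t_k^2}$ using $\|\vy_k^i-\tilde\vy_k^i\|\le D_y$, which yields the $\Lambda\sqrt{10m\sigma_{g'}^2D_y^2\sum_k t_k^2}$ term. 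Collecting all residuals gives \eqref{eq:Qsum5}.

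The main obstacle I anticipate is the interlocking allocation of the negative quadratics: each of $-\tfrac{\tau_k}{4}\|\vx_{k+1}-\vx_k\|^2$, $-\tfrac{\sigma_k}{8}\|\vy_{k+1}-\vy_k\|_\vlambda^2$, $-\tfrac{\gamma_k}{8}\|\vlambda_{k+1}-\vlambda_k\|^2$ and the telescoped $\|\vw-\vw_k\|^2$ gaps must be partitioned into just enough pieces to simultaneously satisfy \eqref{eq:determine_cond2}--\eqref{eq:determine_cond4_c} after reindexing the look-back terms from $\mathcal T_{\text{change}}$, absorb the Young residuals of the $H_f,H_g^x,H_g^y$ terms, cancel the $\|\vlambda_{k+1}-\vlambda\|^2$ and $\|\vx_{k+1}-\vx_k\|^2$ pieces arising from the stochastic second moments, and dominate the $\bar\bq_K,\bar\bP_K$ boundary terms left over from the cross-term telescoping --- all with the exact constants recorded in the statement (the constants also make implicit use of $\theta_k\le1$, which \eqref{eq:thetaktk} supplies whenever $t_k$ is nondecreasing, as it will be under the parameter choices used later). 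A secondary subtlety is getting the measurability bookkeeping right so that precisely the two designated bias terms vanish in expectation, while the adaptively-weighted one is routed through the Hölder-plus-martingale bound rather than dropped.
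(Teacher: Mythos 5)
Your proposal matches the paper's own proof in both structure and detail: the same per-family treatment of the five aggregated sums, the same telescoping of the cross term via \eqref{eq:thetaktk} with the boundary terms $-t_K\theta_K\langle\bar\bq_K,\vlambda_K-\vlambda\rangle$ and $-t_K\theta_K\vlambda^\top\bar\bP_K(\by_K-\by)$ handled by Young's inequality and absorbed into the per-$k$ coefficients and the terminal $\|\vlambda-\vlambda_K\|^2$, $\|\vy-\vy_K\|^2_{\vlambda}$ negatives, the same reindexing and Young treatment of the $H$-terms in the change sum, the same telescoping of the distance sum, the same identification of which two bias terms vanish in conditional expectation versus the adaptively-weighted one routed through H\"older plus an $L^2$ martingale bound, and the same second-moment estimates for the variance sum. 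The proof is correct as proposed; no substantive deviation from the paper's argument.
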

\begin{proof}
For simplicity of notation, we will hide the arguments of $\mathcal{T}_{\text{crossterm},k}$,  $\mathcal{T}_{\text{change},k}$, $\mathcal{T}_{\text{distance},k}$, $\mathcal{T}_{\text{bias},k}$ and $\mathcal{T}_{\text{variance},k}$ in this proof.

Recall \eqref{eq:thetaktk} and \eqref{eq:barqpP0}. Multiplying $\mathcal{T}_{\text{crossterm},k}$ in \eqref{eq:crosstermk} by $t_k\geq0$ and summing it up for $k=0,1,\dots,K-1$, we obtain 
\small
\begin{align}
\label{eq:crossterm_sum1}
&\sum_{k=0}^{K-1}t_k\mathcal{T}_{\text{crossterm},k}\\\nonumber
=&\sum_{k=0}^{K-1}t_k\left[\theta_k \langle \bar\bq_{k},\vlambda_{k}-\vlambda\rangle - \langle \bar\bq_{k+1},\vlambda_{k+1}-\vlambda\rangle \right]+\sum_{k=0}^{K-1}t_k\left[\theta_k\vlambda^\top\bar\bP_{k}(\by_{k}-\by)-\vlambda^\top\bar\bP_{k+1}(\by_{k+1}-\by) \right]\\\nonumber
=&t_0\theta_0 \langle \bar\bq_{0},\vlambda_{0}-\vlambda\rangle-t_K\theta_K \langle \bar\bq_{K},\vlambda_{K}-\vlambda\rangle +t_0\theta_0\vlambda^\top\bar\bP_{0}(\by_{0}-\by)-t_K\theta_K\vlambda^\top\bar\bP_{K}(\by_{K}-\by)\\\nonumber
=&-t_K\theta_K \langle \bar\bq_{K},\vlambda_{K}-\vlambda\rangle-t_K\theta_K\vlambda^\top\bar\bP_{K}(\by_{K}-\by).
\end{align}
\normalsize
Applying Young's inequality, we have
\small
\begin{eqnarray}
\label{eq:last_term1}
&&-t_K\theta_K\langle \bar\bq_{K},\vlambda_{K}-\vlambda\rangle\\\nonumber
&\leq& \frac{t_K\theta_K^2}{\gamma_K}\|\bar\bq_{K}\|^2 +\frac{t_K\gamma_K}{4}\|\vlambda-\vlambda_K\|^2\\\nonumber
&\leq& \frac{8mt_K\theta_K^2(M_g^x)^2}{\gamma_K}\|\vx_{K-1}-\vx_{K-2}\|^2+\frac{2mt_K\theta_K^2(M_g^x)^2}{\gamma_K}\|\vx_{K}-\vx_{K-1}\|^2 +\frac{t_K\gamma_K}{4}\|\vlambda-\vlambda_K\|^2
\end{eqnarray}
\normalsize
where the second inequality is from \eqref{eq:barqk_Youngineq} and Jensen's inequality. Similar, we also have
\small
\begin{eqnarray}
\label{eq:last_term2}
&&-t_K\theta_K\vlambda^\top\bar\bP_{K}(\by_{K}-\by)\\\nonumber
&\leq& \frac{t_K\theta_K^2}{\sigma_K}\sum_{i=1}^m\lambda_i\|\nabla_yg_i(\bx_{K},\by_{K})-\nabla_y g_i(\bx_{K-1},\by_{K-1})\|^2 +\frac{t_K\sigma_K}{4}\|\vy-\vy_K\|_{\vlambda}^2\\\nonumber
&\leq& \frac{3t_K\theta_K^2}{\sigma_K}\|\vlambda\|_1(L_g^{yx})^2\|\bx_{K-1}-\bx_{K}\|^2 
+\frac{3t_K\theta_K^2}{\sigma_K}(L_g^{yy})^2\|\by_{K-1}-\by_{K}\|_{\vlambda}^2
+\frac{3t_K\theta_K^2}{\sigma_K}\|\vlambda\|_1(H_g^y)^2\\\nonumber
&&+\frac{t_K\sigma_K}{4}\|\vy-\vy_K\|_{\vlambda}^2,
\end{eqnarray}
\normalsize
where the first inequlity comes from Young's inequality and the second is by Assumption~\ref{assume:continuity}c and Jensen's inequality. 
Applying \eqref{eq:last_term1} and \eqref{eq:last_term2} to \eqref{eq:crossterm_sum1} leads to 
\small
\begin{eqnarray}
\nonumber
\sum_{k=0}^{K-1}t_k\mathcal{T}_{\text{crossterm},k}
&\leq &\left(\frac{3t_K\theta_K^2(L_g^{yx})^2\|\vlambda\|_1}{\sigma_K}+\frac{2mt_K\theta_K^2(M_g^x)^2}{\gamma_K}\right)\|\vx_{K}-\vx_{K-1}\|^2\\\nonumber
&&+\frac{8mt_K\theta_K^2(M_g^x)^2}{\gamma_K}\|\vx_{K-1}-\vx_{K-2}\|^2+\frac{3t_K\theta_K^2(L_g^{yy})^2}{\sigma_K}\|\by_{K-1}-\by_{K}\|_{\vlambda}^2\\\nonumber
&&+\frac{t_K\gamma_K}{4}\|\vlambda-\vlambda_K\|^2+\frac{t_K\sigma_K}{4}\|\vy-\vy_K\|_{\vlambda}^2+\frac{3t_K\theta_K^2}{\sigma_K}\|\vlambda\|_1(H_g^y)^2\\\nonumber
&\leq &\sum_{k=0}^{K-1}\left(\frac{3t_{k+1}\theta_{k+1}^2(L_g^{yx})^2\|\vlambda\|_1}{\sigma_{k+1}}+\frac{2mt_{k+1}\theta_{k+1}^2(M_g^x)^2}{\gamma_{k+1}}+\frac{8mt_{k+2}\theta_{k+2}^2(M_g^x)^2}{\gamma_{k+2}}\right)\|\vx_{k+1}-\vx_{k}\|^2\\\nonumber
&&+\sum_{k=0}^{K-1}\frac{3t_{k+1}\theta_{k+1}^2(L_g^{yy})^2}{\sigma_{k+1}}\|\by_{k}-\by_{k+1}\|_{\vlambda}^2\\\label{eq:crossterm_sum2}
&&+\frac{t_K\gamma_K}{4}\|\vlambda-\vlambda_K\|^2+\frac{t_K\sigma_K}{4}\|\vy-\vy_K\|_{\vlambda}^2+\sum_{k=0}^{K-1}\frac{3t_{k+1}\theta_{k+1}^2}{\sigma_{k+1}}\|\vlambda\|_1(H_g^y)^2,
\end{eqnarray}
\normalsize
where the second inequality is obtained by adding non-negative terms  to the right-hand side.



Multiplying $\mathcal{T}_{\text{change},k}$ in\eqref{eq:change}  by $t_k\geq0$ and summing it up for $k=0,1,\dots,K-1$, we obtain 
\small
\begin{eqnarray}
\label{eq:change_sum1_old}
\quad\quad&&\sum_{k=0}^{K-1}t_k\mathcal{T}_{\text{change},k}\\\nonumber
&\leq& \sum_{k=0}^{K-1}\left[\frac{t_kL_f}{2}+\frac{4mt_{k+1}(\theta_{k+1}M_g^x)^2}{\gamma_{k+1}}+\frac{16mt_{k+2}(\theta_{k+2}M_g^x)^2}{\gamma_{k+2}}-\frac{t_k\tau_k}{8}\right]\|\bx_{k+1}-\bx_k\|^2\\\nonumber
&&+\sum_{k=0}^{K-1}\left[\frac{ 4t_{k+1}(\theta_{k+1}L^{yx}_g)^2\|\vlambda\|_1}{\sigma_{k+1}}+\frac{t_kL_g^{xx}\|\vlambda\|_1}{2}-\frac{t_k\tau_k}{8}\right]\|\bx_{k+1}-\bx_k\|^2\\\nonumber
&& +\sum_{k=0}^{K-1}\left[\frac{4t_{k+1}(\theta_{k+1}L^{yy}_g)^2}{\sigma_{k+1}}-\frac{t_k\sigma_k}{8}\right]\|\vy_{k+1}-\vy_k\|_{\vlambda}^2- \sum_{k=0}^{K-1}\frac{\gamma_k}{8}\|\vlambda_{k+1}-\vlambda_k\|^2\\\nonumber
&&+\sum_{k=0}^{K-1}t_k\left[H_g^{x}\|\vlambda\|_1 +H_f\right]\|\vx_{k+1}-\vx_{k}\|+\sum_{k=0}^{K-1}t_k\theta_kH_g^y \|\vlambda\|_1^{\frac{1}{2}}\|\vy_{k+1}-\vy_{k}\|_\vlambda
\end{eqnarray}
\normalsize
By Young's inequality, we have
\small
\begin{eqnarray}
\label{eq:change_sum1_orderone}
&&\left[H_g^{x}\|\vlambda\|_1 +H_f\right]\|\vx_{k+1}-\vx_{k}\|+\theta_kH_g^y \|\vlambda\|_1^{\frac{1}{2}}\|\vy_{k+1}-\vy_{k}\|_\vlambda\\\nonumber
&\leq&\frac{4(H_g^{x})^2\|\vlambda\|_1^2}{\tau_k}+ \frac{\tau_k}{16}\|\vx_{k+1}-\vx_{k}\|^2+\frac{4(H_f)^2}{\tau_k}+ \frac{\tau_k}{16}\|\vx_{k+1}-\vx_{k}\|^2\\\nonumber
&&+\frac{4(\theta_kH_g^y)^2\|\vlambda\|_1}{\sigma_k} +\frac{\sigma_k}{16}\|\vy_{k+1}-\vy_{k}\|_\vlambda^2.
\end{eqnarray}
\normalsize
Applying \eqref{eq:change_sum1_orderone} to \eqref{eq:change_sum1_old} gives
\small
\begin{eqnarray}
\nonumber
&&\sum_{k=0}^{K-1}t_k\mathcal{T}_{\text{change},k}\\\label{eq:change_sum1}
&\leq& \sum_{k=0}^{K-1}\left[\frac{t_kL_f}{2}+\frac{4mt_{k+1}(\theta_{k+1}M_g^x)^2}{\gamma_{k+1}}+\frac{16mt_{k+2}(\theta_{k+2}M_g^x)^2}{\gamma_{k+2}}-\frac{t_k\tau_k}{16}\right]\|\bx_{k+1}-\bx_k\|^2\\\nonumber
&&+\sum_{k=0}^{K-1}\left[\frac{ 4t_{k+1}(\theta_{k+1}L^{yx}_g)^2\|\vlambda\|_1}{\sigma_{k+1}}+\frac{t_kL_g^{xx}\|\vlambda\|_1}{2}-\frac{t_k\tau_k}{16}\right]\|\bx_{k+1}-\bx_k\|^2\\\nonumber
&& +\sum_{k=0}^{K-1}\left[\frac{4t_{k+1}(\theta_{k+1}L^{yy}_g)^2}{\sigma_{k+1}}-\frac{t_k\sigma_k}{8}\right]\|\vy_{k+1}-\vy_k\|_{\vlambda}^2 - \sum_{k=0}^{K-1}\frac{\gamma_k}{16}\|\vlambda_{k+1}-\vlambda_k\|^2\\\nonumber
&& +\sum_{k=0}^{K-1}\left[\frac{4t_k(H_g^{x})^2\|\vlambda\|_1^2}{\tau_k}+\frac{4t_k(H_f)^2}{\tau_k}+\frac{4t_k(\theta_kH_g^y)^2\|\vlambda\|_1}{\sigma_k}\right].
\end{eqnarray}
\normalsize

Multiplying $\mathcal{T}_{\text{distance},k}$ in \eqref{eq:distance} by $t_k\geq0$ and summing it up for $k=0,1,\dots,K-1$, we obtain 
\small
\begin{eqnarray}
\nonumber
&&\sum_{k=0}^{K-1}t_k\mathcal{T}_{\text{distance},k} \\\nonumber
&=& \frac{t_0\tau_0-t_0\mu_f}{2}\|\bx-\bx_0\|^2+\frac{t_0\gamma_0}{2}\|\vlambda-\vlambda_0\|^2+\frac{t_0\sigma_0}{2}\|\vy-\vy_0\|_{\vlambda}^2   \\\nonumber
&&+\sum_{k=0}^{K-1}\frac{t_{k+1}\tau_{k+1}-t_{k+1}\mu_f-t_k\tau_k}{2}\|\bx-\bx_{k+1}\|^2+\sum_{k=0}^{K-1}\frac{t_{k+1}\gamma_{k+1}-t_k\gamma_k}{2}\|\vlambda-\vlambda_{k+1}\|^2 \\\nonumber
&& +\sum_{k=0}^{K-1}\frac{t_{k+1}\sigma_{k+1}-t_k\sigma_k-t_k\mu_y}{2}\|\vy-\vy_{k+1}\|_{\vlambda}^2 \\\nonumber
&&- \frac{t_K\tau_K-t_K\mu_f}{2}\|\bx-\bx_K\|^2-\frac{t_K\gamma_K}{2}\|\vlambda-\vlambda_K\|^2-\frac{t_K\sigma_K}{2}\|\vy-\vy_K\|_{\vlambda}^2   \\\nonumber
&&+\frac{t_0\gamma_0}{2}\|\vlambda-\tilde\vlambda_0\|^2+\frac{t_0\sigma_0}{2}\|\vy-\tilde\vy_0\|_{\vlambda}^2   \\\nonumber
&&+\sum_{k=0}^{K-1}\frac{t_{k+1}\gamma_{k+1}-t_k\gamma_k}{2}\|\vlambda-\tilde\vlambda_{k+1}\|^2  +\sum_{k=0}^{K-1}\frac{t_{k+1}\sigma_{k+1}-t_k\sigma_k}{2}\|\vy-\tilde\vy_{k+1}\|_{\vlambda}^2 \\\nonumber
&&-\frac{t_K\gamma_K}{2}\|\vlambda-\tilde\vlambda_K\|^2-\frac{t_K\sigma_K}{2}\|\vy-\tilde\vy_K\|_{\vlambda}^2   \\\label{eq:distance_sum}
&\leq& \frac{t_0\tau_0-t_0\mu_f}{2}\|\bx-\bx_0\|^2+t_0\gamma_0\|\vlambda-\vlambda_0\|^2+t_0\sigma_0\|\vy-\vy_0\|_{\vlambda}^2\\\nonumber
&&+\sum_{k=0}^{K-1}\frac{t_{k+1}\tau_{k+1}-t_{k+1}\mu_f-t_k\tau_k}{2}\|\bx-\bx_{k+1}\|^2+\sum_{k=0}^{K-1}\frac{t_{k+1}\gamma_{k+1}-t_k\gamma_k}{2}\|\vlambda-\vlambda_{k+1}\|^2 \\\nonumber
&& +\sum_{k=0}^{K-1}\frac{t_{k+1}\sigma_{k+1}-t_k\sigma_k-t_k\mu_y}{2}\|\vy-\vy_{k+1}\|_{\vlambda}^2 \\\nonumber
&&-\frac{t_K\gamma_K}{2}\|\vlambda-\vlambda_K\|^2-\frac{t_K\sigma_K}{2}\|\vy-\vy_K\|_{\vlambda}^2  -\frac{t_K\gamma_K}{2}\|\vlambda-\tilde\vlambda_K\|^2-\frac{t_K\sigma_K}{2}\|\vy-\tilde\vy_K\|_{\vlambda}^2 \\\nonumber
&&+\sum_{k=0}^{K-1}\frac{t_{k+1}\gamma_{k+1}-t_k\gamma_k}{2}\|\vlambda-\tilde\vlambda_{k+1}\|^2  +\sum_{k=0}^{K-1}\frac{t_{k+1}\sigma_{k+1}-t_k\sigma_k}{2}\|\vy-\tilde\vy_{k+1}\|_{\vlambda}^2,
\end{eqnarray}
\normalsize
where the inequality is obtained by  dropping non-negative terms and using the fact that $\tilde\by_0=\by_0$ and $\tilde\vlambda_0=\vlambda_0$.


Next, we will bound $\mathbb{E}\sum_{k=0}^{K-1}t_k\mathcal{T}_{\text{bias},k}$ from above. It is easy to show that, for any (deterministic) $\bx\in\mathcal{X}$, 
\small
\begin{eqnarray}
\label{eq:zeromean_6}
\mathbb{E}\langle \vdelta_k^6+(\vDelta_k^6)^\top\vlambda_{k+1}, \bx- \bx_k\rangle
=\mathbb{E}\mathbb{E}_{k-1}\left[\langle \vdelta_k^6+(\vDelta_k^6)^\top\vlambda_{k+1}, \bx- \bx_k\rangle\Big| \vlambda_{k+1},\vy_{k+1}\right]=0,
\end{eqnarray}
\normalsize
where the last equality holds because of \eqref{eq:mean_gf} and \eqref{eq:mean_Ggx}, 
and
\small
\begin{eqnarray}
\nonumber
&&\mathbb{E}\langle\ell_{\vG}(\vx_k,\xi_k^3;\vx_{k-1},\vy_{k+1})-\ell_{\vg}(\vx_k;\vx_{k-1},\vy_{k+1})+\theta_k( \bq_{k}-\bar\bq_{k}),\vlambda_k-\tilde\vlambda_k\rangle\\\label{eq:zeromean_345}
&=&\mathbb{E}\mathbb{E}_{k-1}\left[\langle\ell_{\vG}(\vx_k,\xi_k^3;\vx_{k-1},\vy_{k+1})-\ell_{\vg}(\vx_k;\vx_{k-1},\vy_{k+1})+\theta_k( \bq_{k}-\bar\bq_{k}),\vlambda_k-\tilde\vlambda_k\rangle\Big| \vy_{k+1}\right]\\\nonumber
&=&0,
\end{eqnarray}
\normalsize
where the last equality holds because of \eqref{eq:mean_Gi} and \eqref{eq:mean_Ggx}. 
Given any $0\leq l<k\leq K-1$, we have
\small
\begin{eqnarray*}
&&\mathbb{E}\left\{\left[\vdelta_k^{i,1}+\theta_k \left(\vdelta_k^{i,1}-\vdelta_k^{i,2}\right)\right]^\top(\vy_k^i-\tilde\vy_k^i)
\cdot
\left[\vdelta_l^{i,1}+\theta_l \left(\vdelta_l^{i,1}-\vdelta_l^{i,2}\right)\right]^\top(\vy_l^i-\tilde\vy_l^i)
\right\}\\
&=&\mathbb{E}\mathbb{E}_{k-1}\left\{\left[\vdelta_k^{i,1}+\theta_k \left(\vdelta_k^{i,1}-\vdelta_k^{i,2}\right)\right]^\top(\vy_k^i-\tilde\vy_k^i)
\cdot
\left[\vdelta_l^{i,1}+\theta_l \left(\vdelta_l^{i,1}-\vdelta_l^{i,2}\right)\right]^\top(\vy_l^i-\tilde\vy_l^i)
\right\}\\
&=&0,
\end{eqnarray*}
\normalsize
where the last equality holds because of  \eqref{eq:mean_Ggx} and how $\tilde\by_k$ is generated, i.e., \eqref{eq:tildeyk}. This equation implies 
\small
\begin{eqnarray*}
&&\mathbb{E}\left(\sum_{k=0}^{K-1}t_k\left[\vdelta_k^{i,1}+\theta_k \left(\vdelta_k^{i,1}-\vdelta_k^{i,2}\right)\right]^\top(\vy_k^i-\tilde\vy_k^i)\right)^2\\
&=&\sum_{k=0}^{K-1}t_k^2\mathbb{E}\left(\left[\vdelta_k^{i,1}+\theta_k \left(\vdelta_k^{i,1}-\vdelta_k^{i,2}\right)\right]^\top(\vy_k^i-\tilde\vy_k^i)\right)^2\\
&\leq& \mathbb{E}\sum_{k=0}^{K-1}t_k^2\mathbb{E}\left(\left\|\vdelta_k^{i,1}+\theta_k \left(\vdelta_k^{i,1}-\vdelta_k^{i,2}\right)\right\|^2\|\vy_k^i-\tilde\vy_k^i\|^2\right)\\
&\leq& \sum_{k=0}^{K-1}t_k^2\mathbb{E}\left\|\vdelta_k^{i,1}+\theta_k \left(\vdelta_k^{i,1}-\vdelta_k^{i,2}\right)\right\|^2D_y^2\\
&\leq& 10\sigma_{g'}^2D_y^2\sum_{k=0}^{K-1}t_k^2,
\end{eqnarray*}
\normalsize
where the first inequality by the Cauchy-Schwartz inequality, the second inequality comes from Assumption~\ref{assume:basic}b, and the last is because $\mathbb{E}\left\|\vdelta_k^{i,1}+\theta_k \left(\vdelta_k^{i,1}-\vdelta_k^{i,2}\right)\right\|^2\leq (2(1+\theta_k)^2+2\theta_k^2)\sigma_{g'}^2\leq10\sigma_{g'}^2$ given \eqref{eq:var_Ggy} and the fact that $\theta_k\in[0, 1]$. Summing the inequality above for $i=1,2,\dots,m$ gives 
\small
\begin{eqnarray*}
&&\mathbb{E}\left\|\sum_{k=0}^{K-1}t_k\left[\vDelta_k^1+\theta_k \left(\vDelta_k^1-\vDelta_k^2\right)\right](\vy_k-\tilde\vy_k)\right\|_\infty^2\\
&\leq &\mathbb{E}\left\|\sum_{k=0}^{K-1}t_k\left[\vDelta_k^1+\theta_k \left(\vDelta_k^1-\vDelta_k^2\right)\right](\vy_k-\tilde\vy_k)\right\|^2\\
&\leq &
10m\sigma_{g'}^2D_y^2\sum_{k=0}^{K-1}t_k^2.
\end{eqnarray*}
\normalsize
Using this inequality and the Cauchy-Schwartz inequality, we can show that
\small
\begin{eqnarray}
\nonumber
&&\sum_{k=0}^{K-1}t_k\mathbb{E}\vlambda^\top\left[\vDelta_k^1+\theta_k \left(\vDelta_k^1-\vDelta_k^2\right)\right](\vy_k-\tilde\vy_k)\\\nonumber
&\leq &\sqrt{\mathbb{E}\|\vlambda\|_1^2}
\sqrt{\mathbb{E}\left\|\sum_{k=0}^{K-1}t_k\left[\vDelta_k^1+\theta_k \left(\vDelta_k^1-\vDelta_k^2\right)\right](\vy_k-\tilde\vy_k)\right\|_\infty^2}\\\label{eq:zeromean_12}
&\leq &\sqrt{\mathbb{E}\|\vlambda\|_1^2}
\sqrt{10m\sigma_{g'}^2D_y^2\sum_{k=0}^{K-1}t_k^2}.
\end{eqnarray}
\normalsize
Recall \eqref{eq:zeromean} and the fact that  $\|\vlambda\|_1\leq \Lambda$ almost surely. Applying \eqref{eq:zeromean_6}, \eqref{eq:zeromean_345} and \eqref{eq:zeromean_12}, we have
\small
\begin{eqnarray}
\label{eq:zeromean_sum_stoc}
\mathbb{E}\sum_{k=0}^{K-1}t_k\mathcal{T}_{\text{bias},k}
&\leq&\sqrt{\mathbb{E}\|\vlambda\|_1^2}\sqrt{10m\sigma_{g'}^2D_y^2\sum_{k=0}^{K-1}t_k^2}
\leq\Lambda
\sqrt{10m\sigma_{g'}^2D_y^2\sum_{k=0}^{K-1}t_k^2}
\end{eqnarray}
\normalsize

Next, we want to bound $\mathbb{E}\sum_{k=0}^{K-1}t_k\mathcal{T}_{\text{variance},k}$ from above.
Recall the definitions in \eqref{eq:ell}, \eqref{eq:ellg}, \eqref{eq:qk} and \eqref{eq:barqk}. By Jensen's inequality, we have
\small
\begin{eqnarray}
\nonumber
&&\mathbb{E}\left\|\ell_{\vG}(\vx_k,\xi_k^3;\vx_{k-1},\vy_{k+1})-\ell_{\vg}(\vx_k;\vx_{k-1},\vy_{k+1})+\theta_k( \bq_{k}-\bar\bq_{k})\right\|^2\\\nonumber
&\leq&2\mathbb{E}\|\ell_{\vG}(\vx_k,\xi_k^3;\vx_{k-1},\vy_{k+1})-\ell_{\vg}(\vx_k;\vx_{k-1},\vy_{k+1})\|^2+2\mathbb{E}\|\bq_{k}-\bar\bq_{k}\|^2\\\nonumber
&=&2\mathbb{E}\|\vG(\vx_{k-1},\vy_{k+1},\xi_k^3)+\nabla_x\vG(\vx_{k-1},\vy_{k+1},\xi_k^3)(\vx_{k}-\vx_{k-1})\\\nonumber
&&\quad\quad-\vg(\vx_{k-1},\vy_{k+1})+\nabla_x \vg(\vx_{k-1},\vy_{k+1})(\vx_{k}-\vx_{k-1})\|^2\\\nonumber
&&+2\mathbb{E}\|\ell_{\vg}(\vx_{k};\vx_{k-1},\vy_{k})-\ell_{\vg}(\vx_{k-1};\vx_{k-2},\vy_{k})-\ell_{\vG}(\vx_{k},\xi_k^4;\vx_{k-1},\vy_k)+\ell_{\vG}(\vx_{k-1},\xi_k^5;\vx_{k-2},\vy_k)\|^2\\\nonumber
&\leq&4\mathbb{E}\|\vG(\vx_{k-1},\vy_{k+1},\xi_k^3)-\vg(\vx_{k-1},\vy_{k+1})\|^2\\\nonumber
&&+4\mathbb{E}\|\nabla_x\vG(\vx_{k-1},\vy_{k+1},\xi_k^3)(\vx_{k}-\vx_{k-1})-\nabla_x \vg(\vx_{k-1},\vy_{k+1})(\vx_{k}-\vx_{k-1})\|^2\\\nonumber
&&+8\mathbb{E}\|\vG(\vx_{k-1},\vy_{k},\xi_k^4)-\vg(\vx_{k-1},\vy_{k})\|^2\\\nonumber
&&+8\mathbb{E}\|\nabla_x\vG(\vx_{k-1},\vy_{k},\xi_k^4)(\vx_{k}-\vx_{k-1})-\nabla_x \vg(\vx_{k-1},\vy_{k})(\vx_{k}-\vx_{k-1})\|^2\\\nonumber
&&+8\mathbb{E}\|\vG(\vx_{k-2},\vy_{k},\xi_k^5)-\vg(\vx_{k-2},\vy_{k})\|^2\\\nonumber
&&+8\mathbb{E}\|\nabla_x\vG(\vx_{k-2},\vy_{k},\xi_k^5)(\vx_{k-1}-\vx_{k-2})-\nabla_x \vg(\vx_{k-2},\vy_{k})(\vx_{k-1}-\vx_{k-2})\|^2\\\label{eq:variance1}
&\leq&20m\sigma_g^2+12m\sigma_{g'}^2\mathbb{E}\|\vx_{k}-\vx_{k-1}\|^2+8m\sigma_{g'}^2\mathbb{E}\|\vx_{k-1}-\vx_{k-2}\|^2,
\end{eqnarray}
\normalsize
where the last inequality holds because of Cauchy-Schwartz inequality, \eqref{eq:var_Gi} and \eqref{eq:var_Ggx}. Let $\vlambda_{k+1}=(\lambda_{k+1,1},\dots,\lambda_{k+1,m})^\top$. By \eqref{eq:var_Ggx} and Jensen's inequality, we have 
\small
\begin{eqnarray}
\nonumber
\mathbb{E}\|(\vDelta_k^6)^\top\vlambda_{k+1}\|^2
&\leq&
\mathbb{E}\sum_{i=1}^m\frac{\lambda_{k+1,i}\|\nabla_x G_i(\vx_{k},\vy_{k+1}^i,\xi_k^6)-\nabla_x g_i(\vx_{k},\vy_{k+1}^i)\|^2}{\|\vlambda_{k+1}\|_1}\|\vlambda_{k+1}\|_1^2\\\nonumber
&\leq&\mathbb{E}\mathbb{E}_{k-1}\left[\sum_{i=1}^m\frac{\lambda_{k+1,i}\|\nabla_x G_i(\vx_{k},\vy_{k+1}^i,\xi_k^6)-\nabla_x g_i(\vx_{k},\vy_{k+1}^i)\|^2}{\|\vlambda_{k+1}\|_1}\|\vlambda_{k+1}\|_1^2\Big| \vlambda_{k+1},\vy_{k+1}\right]\\\nonumber
&\leq&\sigma_{g'}^2\mathbb{E}\|\vlambda_{k+1}\|_1^2
\end{eqnarray}
\normalsize
According to this inequality and \eqref{eq:var_gf}, we have 
\small
\begin{eqnarray}
\nonumber
\mathbb{E}\left\|\vdelta_k^6+(\vDelta_k^6)^\top\vlambda_{k+1}\right\|^2
&\leq& 2\mathbb{E}\|\vdelta_k^6\|^2+ 2\mathbb{E}\|(\vDelta_k^6)^\top\vlambda_{k+1}\|^2\\\nonumber
&\leq&2\sigma_{f'}^2+2\sigma_{g'}^2\mathbb{E}\|\vlambda_{k+1}\|_1^2\\\nonumber
&\leq&2\sigma_{f'}^2+4\sigma_{g'}^2\mathbb{E}\|\vlambda_{k+1}-\vlambda\|_1^2+4\sigma_{g'}^2\mathbb{E}\|\vlambda\|_1^2.\\\label{eq:variance2}
&\leq&2\sigma_{f'}^2+4m\sigma_{g'}^2\mathbb{E}\|\vlambda_{k+1}-\vlambda\|^2+4\sigma_{g'}^2\Lambda^2.
\end{eqnarray}
\normalsize
Since $\|\vlambda\|_1\leq \Lambda$ almost surely, by \eqref{eq:var_Ggy} and the fact that $\theta_k\in[0, 1]$,  we have, for $i=1,2,\dots,m$, 
\small
$$
\mathbb{E}\lambda_i\left\|\vdelta_k^{i,1}+\theta_k \left(\vdelta_k^{i,1}-\vdelta_k^{i,2}\right)\right\|^2\leq \Lambda(2(1+\theta_k)^2+2\theta_k^2)\sigma_{g'}^2\leq10\Lambda\sigma_{g'}^2,
$$
\normalsize
which implies  
\small
\begin{eqnarray}
\label{eq:variance3}
\mathbb{E}\left\|\vDelta_k^1+\theta_k \left(\vDelta_k^1-\vDelta_k^2\right)\right\|_{\vlambda}^2
\leq 10m\Lambda\sigma_{g'}^2.
\end{eqnarray}
\normalsize
Recall \eqref{eq:variance}. Applying \eqref{eq:variance1}, \eqref{eq:variance2} and \eqref{eq:variance3}, we have
\small
\begin{eqnarray}
\label{eq:variance_sum_stoc}
&&\mathbb{E}\sum_{k=0}^{K-1}t_k\mathcal{T}_{\text{variance},k}\\\nonumber
&\leq&\left(2\sigma_{f'}^2+4\sigma_{g'}^2\Lambda^2\right)\sum_{k=0}^{K-1}\frac{t_k}{\tau_k}+15m\Lambda\sigma_{g'}^2\sum_{k=0}^{K-1}\frac{t_k}{\sigma_k}+30m\sigma_g^2\sum_{k=0}^{K-1}\frac{t_k}{\gamma_k}\\\nonumber
&&+4m\sigma_{g'}^2\sum_{k=0}^{K-1}\frac{t_k}{\tau_k}\mathbb{E}\|\vlambda_{k+1}-\vlambda\|^2+\sum_{k=0}^{K-1}\left[\frac{18m\sigma_{g'}^2t_{k+1}}{\gamma_{k+1}}+\frac{12m\sigma_{g'}^2t_{k+2}}{\gamma_{k+2}}\right]\mathbb{E}\|\vx_{k+1}-\vx_k\|^2.
\end{eqnarray}
\normalsize
Adding \eqref{eq:crossterm_sum2}, \eqref{eq:change_sum1}, \eqref{eq:distance_sum}, \eqref{eq:zeromean_sum_stoc} and \eqref{eq:variance_sum_stoc} gives the following inequality
\small
\begin{align}
\nonumber
&\mathbb{E}\sum_{k=0}^{K-1}t_k[\phi(\bx_{k+1},\vlambda,\by) - \phi(\bx,\vlambda_{k+1},\by_{k+1})]\\\label{eq:Qsum5_stoc}
\leq&\frac{t_0\tau_0-t_0\mu_f}{2}\|\bx-\bx_0\|^2+t_0\gamma_0\|\vlambda-\vlambda_0\|^2+t_0\sigma_0\|\vy-\vy_0\|_{\vlambda}^2\\\nonumber
&+\sum_{k=0}^{K-1}\left[\frac{t_kL_f}{2}+\frac{6mt_{k+1}\theta_{k+1}^2(M_g^x)^2}{\gamma_{k+1}}+\frac{24mt_{k+2}\theta_{k+2}^2(M_g^x)^2}{\gamma_{k+2}}+\frac{18m\sigma_{g'}^2t_{k+1}}{\gamma_{k+1}}+\frac{12m\sigma_{g'}^2t_{k+2}}{\gamma_{k+2}}-\frac{t_k\tau_k}{16}\right]\\\nonumber
&\quad\quad\quad\times\mathbb{E}\|\vx_{k+1}-\vx_{k}\|^2\\\nonumber
&+\sum_{k=0}^{K-1}\mathbb{E}\left\{\left[\frac{ 7t_{k+1}(\theta_{k+1}L^{yx}_g)^2\|\vlambda\|_1}{\sigma_{k+1}}+\frac{t_kL_g^{xx}\|\vlambda\|_1}{2}-\frac{t_k\tau_k}{16}\right]\|\bx_{k+1}-\bx_k\|^2\right\}\\\nonumber
& +\sum_{k=0}^{K-1}\left[\frac{7t_{k+1}(\theta_{k+1}L^{yy}_g)^2}{\sigma_{k+1}}-\frac{t_k\sigma_k}{8}\right]\mathbb{E}\|\vy_{k+1}-\vy_k\|_{\vlambda}^2 - \sum_{k=0}^{K-1}\frac{\gamma_k}{16}\mathbb{E}\|\vlambda_{k+1}-\vlambda_k\|^2\\\nonumber
& +\sum_{k=0}^{K-1}\left[\frac{4t_k(H_g^{x})^2\mathbb{E}\|\vlambda\|_1^2}{\tau_k}+\frac{4t_k(H_f)^2}{\tau_k}+\frac{4t_k(\theta_kH_g^y)^2\mathbb{E}\|\vlambda\|_1}{\sigma_k}+\frac{3t_{k+1}(\theta_{k+1}H_g^y)^2\|\vlambda\|_1}{\sigma_{k+1}}\right]\\\nonumber
&+\sum_{k=0}^{K-1}\frac{t_{k+1}\tau_{k+1}-t_{k+1}\mu_f-t_k\tau_k}{2}\mathbb{E}\|\bx-\bx_{k+1}\|^2+\sum_{k=0}^{K-1}\left(\frac{t_{k+1}\gamma_{k+1}-t_k\gamma_k}{2}+\frac{4m\sigma_{g'}^2t_k}{\tau_k}\right)\mathbb{E}\|\vlambda-\vlambda_{k+1}\|^2 \\\nonumber
& +\sum_{k=0}^{K-1}\frac{t_{k+1}\sigma_{k+1}-t_k\sigma_k-t_k\mu_y}{2}\mathbb{E}\|\vy-\vy_{k+1}\|_{\vlambda}^2-\frac{t_K\gamma_K}{4}\mathbb{E}\|\vlambda-\vlambda_K\|^2-\frac{t_K\sigma_K}{4}\mathbb{E}\|\vy-\vy_K\|_{\vlambda}^2 \\\nonumber
&+\sum_{k=0}^{K-1}\frac{t_{k+1}\gamma_{k+1}-t_k\gamma_k}{2}\mathbb{E}\|\vlambda-\tilde\vlambda_{k+1}\|^2  +\sum_{k=0}^{K-1}\frac{t_{k+1}\sigma_{k+1}-t_k\sigma_k}{2}\mathbb{E}\|\vy-\tilde\vy_{k+1}\|_{\vlambda}^2\\\nonumber
&-\frac{t_K\gamma_K}{2}\mathbb{E}\|\vlambda-\tilde\vlambda_K\|^2-\frac{t_K\sigma_K}{2}\mathbb{E}\|\vy-\tilde\vy_K\|_{\vlambda}^2 \\\nonumber
&+\Lambda
\sqrt{10m\sigma_{g'}^2D_y^2\sum_{k=0}^{K-1}t_k^2}+\left(2\sigma_{f'}^2+4\sigma_{g'}^2\Lambda^2\right)\sum_{k=0}^{K-1}\frac{t_k}{\tau_k}+15m\Lambda\sigma_{g'}^2\sum_{k=0}^{K-1}\frac{t_k}{\sigma_k}+30m\sigma_g^2\sum_{k=0}^{K-1}\frac{t_k}{\gamma_k}.
\end{align}
\normalsize
Conclusion \eqref{eq:Qsum5} is thus obtained from \eqref{eq:Qsum5_stoc} after applying conditions \eqref{eq:determine_cond2}, \eqref{eq:determine_cond3}, \eqref{eq:determine_cond4_a}, \eqref{eq:determine_cond4_b} and \eqref{eq:determine_cond4_c}, applying condition $\|\vlambda\|_1\leq \Lambda$, and dropping non-positive terms on the right-hand side of  \eqref{eq:Qsum5_stoc}.
\qedsymbol{}
\end{proof}

\subsection{Convergence properties in deterministic case}
In this section, we present the convergence properties of Algorithm~\ref{alg:PDHG}, which is a deterministic special case of Algorithm~\ref{alg:SPDHG} when
$$
\sigma_{f}=\sigma_{g'}=\sigma_{g}=0
$$
in Assumption~\ref{assume:oracle}, and thus is subject to Proposition~\ref{thm:mainprop_deterministic}. Note that \eqref{eq:tildetermzero} holds in this case, which simplifies \eqref{eq:Qsum5}. The following corollary of Proposition~\ref{thm:mainprop_deterministic} thus applies to Algorithm~\ref{alg:PDHG}, which provides upper bounds on the objective gap and constraint violation of $\bar\vx_{K}$.
\begin{corollary}
\label{thm:mainprop_deterministic_simplified}
Suppose  parameters $t_k$, $\theta_k$, $\tau_k$, $\sigma_k$ and $\gamma_k$ in Algorithm~\ref{alg:PDHG} satisfies \eqref{eq:thetaktk}, \eqref{eq:determine_cond3}, \eqref{eq:determine_cond4_a}, \eqref{eq:determine_cond4_b}, and the following conditions
\small
\begin{align}
\label{eq:determine_cond2_simplified}
&\frac{t_kL_f}{2}+\frac{6mt_{k+1}\theta_{k+1}^2(M_g^x)^2}{\gamma_{k+1}}+\frac{24mt_{k+2}\theta_{k+2}^2(M_g^x)^2}{\gamma_{k+2}}- \frac{t_k\tau_k}{16}\leq0,\\
\label{eq:determine_cond4_c_simplified}
&t_{k+1}\gamma_{k+1}-t_k\gamma_k\leq 0
\end{align}
\normalsize
for $k=0,1,\dots,K$. It holds that
\small
\begin{align}
\label{eq:choiceinQ1}
f(\bar\vx_{K})-f(\bx^*)
\leq&\frac{1}{\sum_{k=0}^{K-1}t_k}\frac{t_0\tau_0}{2}\|\vx^*-\vx_0\|^2
+\frac{1}{\sum_{k=0}^{K-1}t_k}\sum_{k=0}^{K-1}\frac{4t_k(H_f)^2}{\tau_k}
\end{align}
\normalsize
and
\small
\begin{align}
\label{eq:choiceinQ2}
\|[\vg^*(\bar\vx_{K})]_+\|_\infty \leq&
\frac{1}{\sum_{k=0}^{K-1}t_k}\left[\frac{t_0\tau_0}{2}\|\bx-\bx_0\|^2+\frac{t_0\gamma_0}{2} (\|\vlambda^*\|_1+1)^2+\frac{t_0\sigma_0}{2}D_y^2(\|\vlambda^*\|_1+1)+ \mathcal{T}_{\|\vlambda^*\|_1+1,K}\right]\\\nonumber
&+\frac{4\left[(H_g^{x})^2(\|\vlambda^*\|_1+1)^2+(H_f)^2\right]}{\sum_{k=0}^{K-1}t_k}\sum_{k=0}^{K-1}\frac{t_k}{\tau_k}+\frac{7(H_g^y)^2(\|\vlambda^*\|_1+1)}{\sum_{k=0}^{K-1}t_k}\sum_{k=0}^{K}\frac{t_k\theta_k^2}{\sigma_k},
\end{align}
\normalsize
where $\mathcal{T}_{\|\vlambda^*\|_1+1,K}$ is defined in \eqref{eq:TK} with $\Lambda=\|\vlambda^*\|_1+1$.
\end{corollary}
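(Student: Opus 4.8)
The plan is to derive this corollary directly from Proposition~\ref{thm:mainprop_deterministic} by specializing to the deterministic oracle. First I would observe that setting $\sigma_{f'}=\sigma_{g'}=\sigma_g=0$ makes the entire last line of \eqref{eq:Qsum5} vanish and collapses conditions \eqref{eq:determine_cond2} and \eqref{eq:determine_cond4_c} into exactly the stated hypotheses \eqref{eq:determine_cond2_simplified} and \eqref{eq:determine_cond4_c_simplified}; hence the corollary's assumptions suffice to invoke the proposition, with all expectations trivial. I would then invoke \eqref{eq:tildetermzero}, valid in the deterministic case, to replace $\tilde\vlambda_{k+1},\tilde\vy_{k+1},\tilde\vlambda_K,\tilde\vy_K$ throughout \eqref{eq:Qsum5} by the fixed points $\vlambda_0,\vy_0$.

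The next step is to clean up the surviving right-hand side of \eqref{eq:Qsum5}. After the substitution the two auxiliary-sequence sums become $\sum_{k=0}^{K-1}\frac{t_{k+1}\gamma_{k+1}-t_k\gamma_k}{2}\|\vlambda-\vlambda_0\|^2$ and $\sum_{k=0}^{K-1}\frac{t_{k+1}\sigma_{k+1}-t_k\sigma_k}{2}\|\vy-\vy_0\|_{\vlambda}^2$, which telescope to $\frac{t_K\gamma_K-t_0\gamma_0}{2}\|\vlambda-\vlambda_0\|^2$ and $\frac{t_K\sigma_K-t_0\sigma_0}{2}\|\vy-\vy_0\|_{\vlambda}^2$; adding these to $t_0\gamma_0\|\vlambda-\vlambda_0\|^2+t_0\sigma_0\|\vy-\vy_0\|_{\vlambda}^2$ and to the leftover $-\frac{t_K\gamma_K}{2}\|\vlambda-\vlambda_0\|^2-\frac{t_K\sigma_K}{2}\|\vy-\vy_0\|_{\vlambda}^2$ leaves precisely $\frac{t_0\gamma_0}{2}\|\vlambda-\vlambda_0\|^2+\frac{t_0\sigma_0}{2}\|\vy-\vy_0\|_{\vlambda}^2$. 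Using $\mu_f\geq0$ in the first term, this yields, for any $\bx\in\cX$ and any $(\vlambda,\by)$ with $\|\vlambda\|_1\leq\Lambda$,
\begin{align*}
&\sum_{k=0}^{K-1}t_k[\phi(\bx_{k+1},\vlambda,\by)-\phi(\bx,\vlambda_{k+1},\by_{k+1})]\\
&\leq\tfrac{t_0\tau_0}{2}\|\bx-\bx_0\|^2+\tfrac{t_0\gamma_0}{2}\|\vlambda-\vlambda_0\|^2+\tfrac{t_0\sigma_0}{2}\|\vy-\vy_0\|_{\vlambda}^2+\mathcal{T}_{\Lambda,K}\\
&\quad+4\big[(H_g^x)^2\Lambda^2+(H_f)^2\big]\sum_{k=0}^{K-1}\frac{t_k}{\tau_k}+7(H_g^y)^2\Lambda\sum_{k=0}^{K}\frac{t_k\theta_k^2}{\sigma_k}.
\end{align*}

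It then remains to specialize this estimate twice and combine with the averaging inequalities already derived. For \eqref{eq:choiceinQ1}, I would take $\bx=\bx^*$, $\vlambda=\vlambda_0=\mathbf{0}$, $\by=\by_0$, so that $\Lambda=0$ is admissible; then $\mathcal{T}_{0,K}=-\sum_{k=0}^{K-1}\frac{t_k\tau_k}{16}\|\bx_{k+1}-\bx_k\|^2\leq0$, the $H_g^x$ and $H_g^y$ contributions drop, only $\frac{t_0\tau_0}{2}\|\bx^*-\bx_0\|^2+4(H_f)^2\sum_k t_k/\tau_k$ survives, and dividing by $\sum_{k=0}^{K-1}t_k$ and invoking \eqref{eq:Qsum_averaged_obj} gives \eqref{eq:choiceinQ1}. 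For \eqref{eq:choiceinQ2}, I would take $\bx=\bx^*$, $\vlambda=\widehat\vlambda_K$, $\by=\widehat\by_K$; by \eqref{eq:lambda_for_barxK_bound} these satisfy $\|\widehat\vlambda_K\|_1\leq\|\vlambda^*\|_1+1=:\Lambda$, so I would bound $\frac{t_0\gamma_0}{2}\|\widehat\vlambda_K\|^2\leq\frac{t_0\gamma_0}{2}(\|\vlambda^*\|_1+1)^2$ (via $\|\cdot\|\leq\|\cdot\|_1$) and, using Assumption~\ref{assume:basic}b, $\frac{t_0\sigma_0}{2}\|\widehat\by_K-\vy_0\|_{\widehat\vlambda_K}^2\leq\frac{t_0\sigma_0}{2}D_y^2\|\widehat\vlambda_K\|_1\leq\frac{t_0\sigma_0}{2}D_y^2(\|\vlambda^*\|_1+1)$; dividing by $\sum_{k=0}^{K-1}t_k$, leaving $\mathcal{T}_{\|\vlambda^*\|_1+1,K}$ intact, and invoking \eqref{eq:Qsum_averaged_cst} gives \eqref{eq:choiceinQ2}.

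The argument is essentially bookkeeping once Proposition~\ref{thm:mainprop_deterministic} is in hand; the one place demanding a little care is the telescoping-and-cancellation of the distance and auxiliary-sequence terms in the second step, together with remembering that $\Lambda$ may be taken to be $0$ in the objective-gap specialization — which is exactly what kills the $H_g^x$, $H_g^y$, and $\mathcal{T}_{\Lambda,K}$ terms there — and that $\vlambda_0=\mathbf{0}$ is used so that $\|\vlambda-\vlambda_0\|^2$ is $0$ in the first specialization and equals $\|\widehat\vlambda_K\|^2$ in the second.
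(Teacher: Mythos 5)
Your proposal is correct and follows essentially the same route as the paper: specialize Proposition~\ref{thm:mainprop_deterministic} to the deterministic case (where \eqref{eq:determine_cond2_simplified} and \eqref{eq:determine_cond4_c_simplified} imply \eqref{eq:determine_cond2} and \eqref{eq:determine_cond4_c}), use \eqref{eq:tildetermzero} to collapse the auxiliary-sequence terms into $\tfrac{t_0\gamma_0}{2}\|\vlambda-\vlambda_0\|^2+\tfrac{t_0\sigma_0}{2}\|\vy-\vy_0\|_{\vlambda}^2$, and then plug the resulting bound into \eqref{eq:Qsum_averaged_obj} with $\Lambda=0$ and into \eqref{eq:Qsum_averaged_cst} with $(\widehat\vlambda_K,\widehat\by_K)$ and $\Lambda=\|\vlambda^*\|_1+1$. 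Your explicit telescoping computation and the observation that $\mathcal{T}_{0,K}\leq 0$ match the paper's (more terse) argument exactly.
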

\begin{proof}
Conditions \eqref{eq:determine_cond2_simplified} and \eqref{eq:determine_cond4_c_simplified} imply conditions  \eqref{eq:determine_cond2} and \eqref{eq:determine_cond4_c} in the deterministic case. Therefore, by Proposition~\ref{thm:mainprop_deterministic}, it holds, for any deterministic $\bx\in\cX$, $\by\in\cY$ and  $\vlambda\in\mathbb{R}_+^m$ satisfying $\|\vlambda\|_1\leq \Lambda$, that
\small
\begin{align}
\nonumber
& \sum_{k=0}^{K-1}t_k[\phi(\bx_{k+1},\vlambda,\by) - \phi(\bx,\vlambda_{k+1},\by_{k+1})]\\\label{eq:Qsum5_simplified}
\leq&\frac{t_0\tau_0}{2}\|\bx-\bx_0\|^2+\frac{t_0\gamma_0}{2} \|\vlambda-\vlambda_0\|^2+\frac{t_0\sigma_0}{2} \|\vy-\vy_0\|_{\vlambda}^2+ \mathcal{T}_{\Lambda,K}\\\nonumber
& +4\left[(H_g^{x})^2\Lambda^2+(H_f)^2\right]\sum_{k=0}^{K-1}\frac{t_k}{\tau_k}+7(H_g^y)^2\Lambda\sum_{k=0}^{K}\frac{t_k\theta_k^2}{\sigma_k},
\end{align}
\normalsize
where $\mathcal{T}_{\Lambda,K}$ is defined in \eqref{eq:TK}. Recall that $\vlambda_0=\mathbf{0}$ and observe that $\mathcal{T}_{0,K}\leq 0$. Applying \eqref{eq:Qsum5_simplified} to the right-hand side of \eqref{eq:Qsum_averaged_obj} with $\Lambda=0$ leads to \eqref{eq:choiceinQ1}. Recall \eqref{eq:lambda_for_barxK}, \eqref{eq:y_for_barxK} and \eqref{eq:lambda_for_barxK_bound}. Inequality \eqref{eq:choiceinQ2} is thus obtained by applying \eqref{eq:Qsum5_simplified} to the right-hand side of \eqref{eq:Qsum_averaged_cst} with $\Lambda= \|\vlambda^*\|_1+1$ and using the facts that $\|\widehat\vlambda_K-\vlambda_0\|=\|\widehat\vlambda_K\|\leq\|\vlambda^*\|_1+1$ and that $\|\widehat\by_K-\vy_0\|_{\widehat\vlambda_K}^2\leq \|\widehat\vlambda_K\|_1D_y^2$. 
\qedsymbol{}
\end{proof}

By choosing parameters $t_k$, $\theta_k$, $\tau_k$, $\sigma_k$ and $\gamma_k$ in Corollary~\ref{thm:mainprop_deterministic_simplified}  appropriately, we can obtain the convergence properties of Algorithm~\ref{alg:PDHG} in different scenarios in the following theorems.

\begin{theorem}
\label{thm:maintheorem_deterministic}
Suppose $\mu_f>0$, $\mu_y>0$, and parameters $t_k$, $\theta_k$, $\tau_k$, $\sigma_k$ and $\gamma_k$ in Algorithm~\ref{alg:PDHG} are chosen as
\begin{eqnarray*}
    &t_k=k+k_0+1,\quad \theta_k=\frac{k+k_0}{k+k_0+1},\\
    &\tau_k =\frac{(k+k_0+1) \mu_f}{2},\quad \sigma_k =\frac{(k+k_0) \mu_y}{2},\quad    \gamma_k=\frac{3456m(M_g^x)^2}{\mu_f(k+k_0+1)},
\end{eqnarray*}
where $k_0\geq \max\left\{32 L_f/\mu_f,\sqrt{224}L^{yy}_g/\mu_y\right\}$. Then 
\small
\begin{align}
\label{eq:objective_deterministic1}
f(\bar\vx_{K})-f(\bx^*)\leq \frac{(k_0+1)^2\mu_f\|\vx^*-\vx_0\|^2}{2K(K+1+2k_0)}+\frac{16(H_f)^2}{\mu_f(K+2k_0+1)}
\end{align}
\normalsize
and
\small
\begin{align}
\label{eq:constraint_deterministic1}
\|[\vg^*(\bar\vx_{K})]_+\|_\infty
\leq&\frac{(k_0+1)^2\mu_f\|\vx^*-\vx_0\|^2}{2K(K+1+2k_0)}+\frac{3456m (M_g^x)^2(\|\vlambda^*\|_1+1)^2}{\mu_f(K(K+1+2k_0))}+\frac{k_0(k_0+1)\mu_yD_y^2(\|\vlambda^*\|_1+1)}{2K(K+1+2k_0)}\\\nonumber
&+ \frac{16\left[(H_g^{x})^2(\|\vlambda^*\|_1+1)^2+(H_f)^2\right]}{\mu_f((K+1)+2k_0)}+\frac{56(H_g^y)^2(\|\vlambda^*\|_1+1)}{\mu_y((K+1)+2k_0)} + \frac{2\mathcal{T}_{\|\vlambda^*\|_1+1,K}}{K(K+1+2k_0)},
\end{align}
\normalsize
where $\mathcal{T}_{\|\vlambda^*\|_1+1,K}$ is defined in \eqref{eq:TK} and is bounded by a constant independent of $K$, i.e.,  
$\mathcal{T}_{\|\vlambda^*\|_1+1,K}=O(1)$. In addition, $\mathcal{T}_{\|\vlambda^*\|_1+1,K}\leq 0$ when
\begin{eqnarray}
\label{eq:k0removehatK}
k_0\geq \max\left\{ \sqrt{\frac{448(L^{yx}_g)^2(\|\vlambda^*\|_1+1)}{\mu_y\mu_f}},~~\frac{32L_g^{xx}(\|\vlambda^*\|_1+1)}{\mu_f}\right\}-1.
\end{eqnarray}
\end{theorem}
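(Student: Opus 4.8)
The statement is the strongly-convex/strongly-concave case ($\mu_f>0$, $\mu_y>0$), so the strategy is simply to verify that the prescribed choices of $t_k,\theta_k,\tau_k,\sigma_k,\gamma_k$ satisfy all the hypotheses of Corollary~\ref{thm:mainprop_deterministic_simplified} (equivalently Proposition~\ref{thm:mainprop_deterministic} with zero variances), and then to plug these parameters into the two bounds \eqref{eq:choiceinQ1} and \eqref{eq:choiceinQ2} and simplify. First I would check \eqref{eq:thetaktk}: $\theta_{k+1}t_{k+1}=\frac{k+k_0+1}{k+k_0+2}\cdot(k+k_0+2)=k+k_0+1=t_k$, which holds exactly. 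Then \eqref{eq:determine_cond4_a}: $t_{k+1}\tau_{k+1}-t_{k+1}\mu_f-t_k\tau_k=(k+k_0+2)\frac{(k+k_0+2)\mu_f}{2}-(k+k_0+2)\mu_f-(k+k_0+1)\frac{(k+k_0+1)\mu_f}{2}$, which one computes equals $\frac{\mu_f}{2}[(k+k_0+2)^2-2(k+k_0+2)-(k+k_0+1)^2]=\frac{\mu_f}{2}[2(k+k_0)+3-2(k+k_0)-4]<0$ — so it holds. Similarly \eqref{eq:determine_cond4_b} with $\sigma_k=\frac{(k+k_0)\mu_y}{2}$: $t_{k+1}\sigma_{k+1}-t_k\sigma_k-t_k\mu_y=(k+k_0+2)\frac{(k+k_0+1)\mu_y}{2}-(k+k_0+1)\frac{(k+k_0)\mu_y}{2}-(k+k_0+1)\mu_y=\frac{(k+k_0+1)\mu_y}{2}[(k+k_0+2)-(k+k_0)-2]=0$, so it holds with equality. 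And \eqref{eq:determine_cond4_c_simplified}: $t_{k+1}\gamma_{k+1}-t_k\gamma_k=\frac{3456m(M_g^x)^2}{\mu_f}-\frac{3456m(M_g^x)^2}{\mu_f}=0$, so $t_k\gamma_k\equiv \frac{3456m(M_g^x)^2}{\mu_f}$ is constant.

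**Verifying the "smoothness" conditions \eqref{eq:determine_cond3} and \eqref{eq:determine_cond2_simplified}.** For \eqref{eq:determine_cond3} I would bound $\frac{7t_{k+1}(\theta_{k+1}L_g^{yy})^2}{\sigma_{k+1}}\le \frac{7(k+k_0+2)(L_g^{yy})^2}{(k+k_0+1)\mu_y/2}\le \frac{28(L_g^{yy})^2}{\mu_y}$ (using $\theta_{k+1}\le1$ and $\frac{k+k_0+2}{k+k_0+1}\le 2$ for $k\ge0$, $k_0\ge1$), while $\frac{t_k\sigma_k}{8}=\frac{(k+k_0+1)(k+k_0)\mu_y}{16}\ge \frac{k_0^2\mu_y}{16}$; so the condition follows if $k_0^2\mu_y/16\ge 28(L_g^{yy})^2/\mu_y$, i.e. $k_0\ge \sqrt{448}\,L_g^{yy}/\mu_y$, which is implied by the stated $k_0\ge\sqrt{224}L_g^{yy}/\mu_y$ after a cleaner constant check (I would recompute the constant carefully: $t_{k+1}/\sigma_{k+1}\le (k+k_0+2)/((k+k_0+1)\mu_y/2)$ and the worst ratio is at $k=0$). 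For \eqref{eq:determine_cond2_simplified}, with $t_k\gamma_k\equiv c:=3456m(M_g^x)^2/\mu_f$ I'd write $\frac{6mt_{k+1}\theta_{k+1}^2(M_g^x)^2}{\gamma_{k+1}}=\frac{6m(M_g^x)^2 t_{k+1}^2\theta_{k+1}^2}{c}\le \frac{6m(M_g^x)^2 t_k^2}{c}$ (since $t_{k+1}\theta_{k+1}=t_k$), and likewise $\frac{24mt_{k+2}\theta_{k+2}^2(M_g^x)^2}{\gamma_{k+2}}=\frac{24m(M_g^x)^2 t_{k+1}^2}{c}\le \frac{24m(M_g^x)^2 (t_k+1)^2}{c}\le \frac{96m(M_g^x)^2 t_k^2}{c}$ (crudely, for $t_k\ge 1$); summing gives at most $\frac{(6+96)m(M_g^x)^2 t_k^2}{c}\le \frac{t_k^2}{24}\le \frac{t_k\tau_k}{24}$ since $\tau_k=\frac{(k+k_0+1)\mu_f}{2}\ge t_k\mu_f/2$... here I should be careful: $\tau_k=t_k\mu_f/2$ exactly, so $\frac{t_k\tau_k}{16}=\frac{t_k^2\mu_f}{32}$, and $\frac{102 m(M_g^x)^2 t_k^2}{c}=\frac{102\mu_f t_k^2}{3456}\le \frac{t_k^2\mu_f}{32}$ precisely since $3456/102>32$; the remaining piece $\frac{t_kL_f}{2}=\frac{t_k^2\mu_f}{4}\cdot\frac{2L_f}{t_k\mu_f}\le\frac{t_k^2\mu_f}{64}$ when $t_k\ge 32L_f/\mu_f$, guaranteed by $k_0\ge 32L_f/\mu_f$. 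So \eqref{eq:determine_cond2_simplified} holds; I expect the delicate part to be getting the absolute constants (the $3456$, the $32$, the $\sqrt{224}$) to line up exactly, which is the main bookkeeping obstacle.

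**Deriving the displayed bounds.** Once the hypotheses are checked, Corollary~\ref{thm:mainprop_deterministic_simplified} applies. I compute $\sum_{k=0}^{K-1}t_k=\sum_{k=0}^{K-1}(k+k_0+1)=\frac{K(K+2k_0+1)}{2}$. For \eqref{eq:objective_deterministic1}: the first term of \eqref{eq:choiceinQ1} is $\frac{t_0\tau_0}{2}\|\vx^*-\vx_0\|^2/\sum t_k=\frac{(k_0+1)\cdot(k_0+1)\mu_f/2}{2}\|\vx^*-\vx_0\|^2/\frac{K(K+2k_0+1)}{2}=\frac{(k_0+1)^2\mu_f\|\vx^*-\vx_0\|^2}{2K(K+2k_0+1)}$; the second term: $\sum_{k=0}^{K-1}\frac{4t_k(H_f)^2}{\tau_k}=\sum_{k=0}^{K-1}\frac{4(H_f)^2}{\mu_f/2}=\frac{8K(H_f)^2}{\mu_f}$, divided by $\frac{K(K+2k_0+1)}{2}$ gives $\frac{16(H_f)^2}{\mu_f(K+2k_0+1)}$ — matching \eqref{eq:objective_deterministic1}. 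For \eqref{eq:constraint_deterministic1} I plug $\Lambda=\|\vlambda^*\|_1+1$ into \eqref{eq:choiceinQ2}: $\frac{t_0\tau_0}{2}\|\bx-\bx_0\|^2$ and the $\sum t_k/\tau_k$ term give the same shapes as above; $\frac{t_0\gamma_0}{2}(\|\vlambda^*\|_1+1)^2=\frac{3456m(M_g^x)^2}{2\mu_f}(\|\vlambda^*\|_1+1)^2$ divided by $\sum t_k$ gives the $\frac{3456m(M_g^x)^2(\|\vlambda^*\|_1+1)^2}{\mu_f K(K+2k_0+1)}$ term; $\frac{t_0\sigma_0}{2}D_y^2(\|\vlambda^*\|_1+1)=\frac{(k_0+1)k_0\mu_y}{4}D_y^2(\|\vlambda^*\|_1+1)$ over $\sum t_k$ gives the $\frac{k_0(k_0+1)\mu_yD_y^2(\|\vlambda^*\|_1+1)}{2K(K+2k_0+1)}$ term; the $H_g^y$ term needs $\sum_{k=0}^{K}\frac{t_k\theta_k^2}{\sigma_k}\le \sum_{k=0}^{K}\frac{t_k}{\sigma_k}=\sum_{k=0}^K\frac{k+k_0+1}{(k+k_0)\mu_y/2}\le \frac{4(K+1)}{\mu_y}$ (bounding $\frac{k+k_0+1}{k+k_0}\le 2$), times $\frac{7(H_g^y)^2(\|\vlambda^*\|_1+1)}{\sum t_k}$ gives the $\frac{56(H_g^y)^2(\|\vlambda^*\|_1+1)}{\mu_y(K+1+2k_0)}$ term (up to the constant check). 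Finally for the $\mathcal{T}_{\|\vlambda^*\|_1+1,K}$ term: dividing by $\sum t_k=\frac{K(K+2k_0+1)}{2}$ gives the $\frac{2\mathcal{T}_{\cdots}}{K(K+2k_0+1)}$ factor; for the $O(1)$ claim I bound each coefficient $\frac{7t_{k+1}(\theta_{k+1}L_g^{yx})^2\Lambda}{\sigma_{k+1}}+\frac{t_kL_g^{xx}\Lambda}{2}-\frac{t_k\tau_k}{16}$ — the first term is $O(1)$ uniformly (since $t_{k+1}/\sigma_{k+1}=O(1)$), the second is $O(t_k)$, the third is $-\Theta(t_k^2)$, so the sum over $k$ of $\mathbb{E}\|\bx_{k+1}-\bx_k\|^2$ times these coefficients is bounded since the negative $-\Theta(t_k^2)$ dominates the positive $O(t_k)$ for $k$ large (and $\mathbb{E}\|\bx_{k+1}-\bx_k\|^2$ is bounded via the compactness/boundedness already built into the analysis, or one argues the summand is eventually $\le 0$). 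The condition \eqref{eq:k0removehatK} is exactly the threshold making each bracket $\le 0$ for all $k\ge 0$: I'd require $\frac{7t_{k+1}(\theta_{k+1}L_g^{yx})^2\Lambda}{\sigma_{k+1}}\le \frac{t_k\tau_k}{32}$ and $\frac{t_kL_g^{xx}\Lambda}{2}\le \frac{t_k\tau_k}{32}$, which unwind to $k_0\ge\sqrt{448(L_g^{yx})^2\Lambda/(\mu_y\mu_f)}$ and $k_0\ge 32L_g^{xx}\Lambda/\mu_f$ respectively (modulo $-1$ shifts), matching the stated \eqref{eq:k0removehatK}. I expect the main obstacle to be nothing conceptual but purely the arithmetic of tracking the numerical constants through every inequality so that the stated thresholds on $k_0$ and the stated coefficients ($3456$, $16$, $56$, the factor $2$ on $\mathcal{T}$) come out exactly right.
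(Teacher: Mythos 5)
Your proposal follows the paper's proof step for step: verify \eqref{eq:thetaktk}, \eqref{eq:determine_cond4_a}, \eqref{eq:determine_cond4_b}, \eqref{eq:determine_cond4_c_simplified}, \eqref{eq:determine_cond2_simplified} and \eqref{eq:determine_cond3}, invoke Corollary~\ref{thm:mainprop_deterministic_simplified}, evaluate $\sum_k t_k$, $\sum_k t_k/\tau_k$, $\sum_k t_k\theta_k^2/\sigma_k$, and handle $\mathcal{T}_{\|\vlambda^*\|_1+1,K}$ by showing its summands are nonpositive beyond a $K$-independent index $\widehat K$, with \eqref{eq:k0removehatK} forcing $\widehat K\le 0$. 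The only real slippage is in the constants you flag yourself: bounding $\theta_{k+1}\le 1$ and $(k+k_0+2)^2\le 4(k+k_0+1)^2$ yields $28(L_g^{yy})^2/\mu_y$ (forcing $k_0\ge\sqrt{448}L_g^{yy}/\mu_y$ rather than the stated $\sqrt{224}$) and a coefficient $102/3456$ for which $\tfrac{1}{64}+\tfrac{102}{3456}>\tfrac{1}{32}$, so \eqref{eq:determine_cond2_simplified} does not close as you wrote it; you need the exact cancellation $t_{k+1}\theta_{k+1}^2=t_k^2/t_{k+1}$ together with $(k+k_0+2)^2\le 2(k+k_0+1)^2$, as the paper uses, which gives $14(L_g^{yy})^2/\mu_y$ and $54/3456=1/64$ and makes the stated thresholds work.
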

\begin{proof}
It is easy to verify that \eqref{eq:thetaktk}, \eqref{eq:determine_cond4_a}, \eqref{eq:determine_cond4_b} and \eqref{eq:determine_cond4_c_simplified} hold.
According to the definitions of $t_k$, $\theta_k$, $\tau_k$, $\sigma_k$ and $\gamma_k$, we have 
\begin{eqnarray}
\nonumber
&&\frac{t_kL_f}{2}+\frac{6mt_{k+1}(\theta_{k+1}M_g^x)^2}{\gamma_{k+1}}+\frac{24mt_{k+2}(\theta_{k+2}M_g^x)^2}{\gamma_{k+2}}\\\nonumber
&=&\frac{t_kL_f}{2}+\frac{6m(k+k_0+1)^2(M_g^x)^2}{3456 m(M_g^x)^2/\mu_f}+\frac{24m(k+k_0+2)^2(M_g^x)^2}{3456m (M_g^x)^2/\mu_f}\\\label{eq:determine_cond2_verified}
&\leq&\frac{t_k\tau_k}{32}+\frac{54m(k+k_0+1)^2(M_g^x)^2}{3456 m(M_g^x)^2/\mu_f}=\frac{t_k\tau_k}{16},
\end{eqnarray}
where the inequality is because $\tau_k\geq k_0\mu_f/2\geq 16 L_f$ and $k+k_0+2\leq 2(k+k_0+1)$. This verifies \eqref{eq:determine_cond2_simplified}. In addition, 
\begin{eqnarray}
\label{eq:determine_cond3_verified}
\frac{7t_{k+1}(\theta_{k+1}L^{yy}_g)^2}{\sigma_{k+1}}
=\frac{14(k+k_0+1)(L^{yy}_g)^2}{(k+k_0+2)\mu_y}
\leq \frac{k_0^2\mu_y}{16}
\leq \frac{t_k\sigma_k}{8},
\end{eqnarray}
where the first inequality is because $k_0\geq \sqrt{224}L^{yy}_g/\mu_y$. This verifies \eqref{eq:determine_cond3}. Hence, \eqref{eq:choiceinQ1} and \eqref{eq:choiceinQ2} hold by Corollary~\ref{thm:mainprop_deterministic_simplified}. 

By the definitions of $t_k$, $\theta_k$, $\tau_k$, $\sigma_k$ and $\gamma_k$, we have
\begin{align}
\label{eq:sumtk}
\sum_{k=0}^{K-1}t_k=&\frac{K(K+1+2k_0)}{2}\\
\label{eq:sumtktauk}
\sum_{k=0}^{K-1}\frac{t_k}{\tau_k}=&\frac{2K}{\mu_f}\\
\label{eq:sumtkthetaksigmak}
\sum_{k=0}^{K}\frac{t_k\theta_k^2}{\sigma_k}\leq&\frac{2(K+1)}{\mu_y}\leq \frac{4K}{\mu_y}.
\end{align}
Conclusions \eqref{eq:objective_deterministic1} and \eqref{eq:constraint_deterministic1} by applying \eqref{eq:sumtk}, \eqref{eq:sumtktauk} and \eqref{eq:sumtkthetaksigmak} as well as the definition of $t_0$, $\tau_0$, $\sigma_0$ and $\gamma_0$ to the right-hand sides of \eqref{eq:choiceinQ1} and \eqref{eq:choiceinQ2}.

To show the second part of the conclusion,  we observe that 
\small
\begin{align}
\nonumber
\mathcal{T}_{\|\vlambda^*\|_1+1,K}
=& \sum_{k=0}^{K-1}\left[\frac{ 7t_{k+1}(\theta_{k+1}L^{yx}_g)^2(\|\vlambda^*\|_1+1)}{\sigma_{k+1}}+\frac{t_kL_g^{xx}(\|\vlambda^*\|_1+1)}{2}-\frac{t_k\tau_k}{16}\right]\|\bx_{k+1}-\bx_k\|^2\\\label{eq:Tlambda_deterministic}
\leq & \sum_{k=0}^{\widehat K-1}\left[\frac{ 7t_{k+1}(\theta_{k+1}L^{yx}_g)^2(\|\vlambda^*\|_1+1)}{\sigma_{k+1}}+\frac{t_kL_g^{xx}(\|\vlambda^*\|_1+1)}{2}-\frac{t_k\tau_k}{16}\right]\|\bx_{k+1}-\bx_k\|^2,
\end{align}
\normalsize
where 
\begin{eqnarray}
\label{eq:k0hatK_deterministic}
\widehat K=\left\lceil\max\left\{ \sqrt{\frac{448(L^{yx}_g)^2(\|\vlambda^*\|_1+1)}{\mu_y\mu_f}},~~\frac{32L_g^{xx}(\|\vlambda^*\|_1+1)}{\mu_f}\right\}-k_0-1\right\rceil.
\end{eqnarray}
In fact, when $k\geq\widehat K$, it can be verified easily that 
\begin{eqnarray*}
\frac{ 7t_{k+1}(\theta_{k+1}L^{yx}_g)^2(\|\vlambda^*\|_1+1)}{\sigma_{k+1}}+\frac{t_kL_g^{xx}(\|\vlambda^*\|_1+1)}{2} \leq \frac{t_k\tau_k}{32}+\frac{t_k\tau_k}{32} =\frac{t_k\tau_k}{16},
\end{eqnarray*}
so the summands in the definition of $\mathcal{T}_{\|\vlambda^*\|_1+1,K}$ with indexes $k\geq\widehat K$ are less than or equal to zero. The right-hand side of  \eqref{eq:Tlambda_deterministic} is a constant independent of $K$, so $\mathcal{T}_{\|\vlambda^*\|_1+1,K}=O(1)$. In addition, we have $\widehat K\leq 0$ and thus $\mathcal{T}_{\|\vlambda^*\|_1+1,K}\leq 0$ when \eqref{eq:k0removehatK} holds.
\qedsymbol{}
\end{proof}

Suppose the problem is smooth, which is the case when $H_g^{x}=H_g^{y}=H_f=0$. Theorem~\ref{thm:maintheorem_deterministic} shows that Algorithm~\ref{alg:PDHG} finds an $\epsilon$-optimal solution for \eqref{eq:SIP} in $O(\epsilon^{-0.5})$ iterations when $\mu_f>0$ and $\mu_y>0$. This iteration complexity increases to $O(\epsilon^{-1})$ when the problem is nonsmooth, i.e., when at least one of $H_g^{x}$, $H_g^{y}$ and $H_f$ is positive.

\begin{theorem}
\label{thm:maintheorem_deterministic_muy0}
Suppose $\mu_f>0$, $\mu_y=0$, and parameters $t_k$, $\theta_k$, $\tau_k$, $\sigma_k$ and $\gamma_k$ in Algorithm~\ref{alg:PDHG} are chosen as
\begin{eqnarray*}
    &t_k=k+k_0+1,\quad \theta_k=\frac{k+k_0}{k+k_0+1},\\
    &\tau_k =\frac{(k+k_0+1) \mu_f}{2},\quad \sigma_k =\frac{\max\{\sqrt{56}(K+k_0)L^{yy}_g,(H_g^y/D_y)K^{\frac{1}{2}}(K+k_0)\}}{k+k_0+1},\quad    \gamma_k=\frac{3456m (M_g^x)^2}{\mu_f(k+k_0+1)},
\end{eqnarray*}
where $k_0\geq 18 L_f/\mu_f$. Then \eqref{eq:objective_deterministic1} holds and 
\small
\begin{align}
\label{eq:constraint_deterministic1_muy0}
\|[\vg^*(\bar\vx_{K})]_+\|_\infty
\leq&\frac{(k_0+1)^2\mu_f\|\vx^*-\vx_0\|^2}{2K(K+1+2k_0)}+\frac{3456m (M_g^x)^2(\|\vlambda^*\|_1+1)^2}{\mu_fK(K+1+2k_0)}
+\frac{\sqrt{56}L^{yy}_gD_y^2(\|\vlambda^*\|_1+1)}{K}\\\nonumber
&+\frac{16\left[(H_g^{x})^2(\|\vlambda^*\|_1+1)^2+(H_f)^2\right]}{\mu_f(K+1+2k_0)}+\frac{29H_g^yD_y(\|\vlambda^*\|_1+1)}{\sqrt{K}} + \frac{2\mathcal{T}_{\|\vlambda^*\|_1+1,K}}{K(K+1+2k_0)},
\end{align}
\normalsize
where $\mathcal{T}_{\|\vlambda^*\|_1+1,K}$ is defined in \eqref{eq:TK} and is bounded by a constant independent of $K$, i.e.,  
$\mathcal{T}_{\|\vlambda^*\|_1+1,K}=O(1)$. In addition, $\mathcal{T}_{\|\vlambda^*\|_1+1,K}\leq 0$ when
\begin{eqnarray}
\label{eq:k0removehatK_muy0}
k_0\geq \max\left\{ \frac{112(L^{yx}_g)^2(\|\vlambda^*\|_1+1)}{\sqrt{14}L^{yy}_g\mu_f},~~\frac{16L_g^{xx}(\|\vlambda^*\|_1+1)}{\mu_f}\right\}-1.
\end{eqnarray}
\end{theorem}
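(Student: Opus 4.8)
The plan is to obtain Theorem~\ref{thm:maintheorem_deterministic_muy0} as an instance of Corollary~\ref{thm:mainprop_deterministic_simplified}, following the same three-step template used for Theorem~\ref{thm:maintheorem_deterministic}: verify the parameter conditions, evaluate the relevant sums in \eqref{eq:choiceinQ1}--\eqref{eq:choiceinQ2}, and bound the residual term $\mathcal{T}_{\|\vlambda^*\|_1+1,K}$ from \eqref{eq:TK}. The one structurally new feature is that with $\mu_y=0$ condition \eqref{eq:determine_cond4_b} forces $t_k\sigma_k$ to be nonincreasing, so we must use a horizon-dependent step size $\sigma_k=C_K/(k+k_0+1)$ with $C_K:=\max\{\sqrt{56}(K+k_0)L^{yy}_g,(H_g^y/D_y)K^{1/2}(K+k_0)\}$, for which $t_k\sigma_k\equiv C_K$.

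For the first step, \eqref{eq:thetaktk} and \eqref{eq:determine_cond4_c_simplified} hold because $t_k\gamma_k$ is constant; \eqref{eq:determine_cond4_a} reduces after substituting $\tau_k$ to $-1\le 0$; \eqref{eq:determine_cond4_b} holds with equality since $t_k\sigma_k\equiv C_K$ and $\mu_y=0$; and \eqref{eq:determine_cond2_simplified} is checked verbatim as in \eqref{eq:determine_cond2_verified} (it involves none of the $\sigma_k$), using $\tau_k\ge k_0\mu_f/2$ and $k_0\ge 18L_f/\mu_f$. The genuinely new verification is \eqref{eq:determine_cond3}: since $t_{k+1}\theta_{k+1}^2/\sigma_{k+1}=(k+k_0+1)^2/C_K$ and $t_k\sigma_k=C_K$, condition \eqref{eq:determine_cond3} becomes $C_K^2\ge 56(k+k_0+1)^2(L^{yy}_g)^2$ for all $k\le K-1$, which holds because $C_K\ge\sqrt{56}(K+k_0)L^{yy}_g$.

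For the second step, apply Corollary~\ref{thm:mainprop_deterministic_simplified} to get \eqref{eq:choiceinQ1}--\eqref{eq:choiceinQ2}. One still has $\sum_{k=0}^{K-1}t_k=K(K+1+2k_0)/2$ and $\sum_{k=0}^{K-1}t_k/\tau_k=2K/\mu_f$, so \eqref{eq:choiceinQ1} yields \eqref{eq:objective_deterministic1} exactly as before (neither the objective bound nor these sums see $\sigma_k$). The new ingredient for \eqref{eq:choiceinQ2} is $\sum_{k=0}^{K}t_k\theta_k^2/\sigma_k=C_K^{-1}\sum_{k=0}^{K}(k+k_0)^2\le (K+1)(K+k_0)^2/C_K$; feeding $t_0\sigma_0=C_K\le\sqrt{56}(K+k_0)L^{yy}_g+(H_g^y/D_y)K^{1/2}(K+k_0)$ into the $\tfrac{t_0\sigma_0}{2}D_y^2(\|\vlambda^*\|_1+1)$ term splits it into an $O(1/K)$ piece (the $\sqrt{56}L^{yy}_gD_y^2(\|\vlambda^*\|_1+1)/K$ term of \eqref{eq:constraint_deterministic1_muy0}) and an $O(1/\sqrt K)$ piece, while $C_K\ge(H_g^y/D_y)K^{1/2}(K+k_0)$ bounds $\tfrac{7(H_g^y)^2(\|\vlambda^*\|_1+1)}{\sum t_k}\sum t_k\theta_k^2/\sigma_k$ by another $O(1/\sqrt K)$ piece; adding these and using $K+1\le 2K$, $(K+k_0)/(K+1+2k_0)\le 1$ gives the $29H_g^yD_y(\|\vlambda^*\|_1+1)/\sqrt K$ term. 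The remaining terms of \eqref{eq:choiceinQ2} collapse to those of \eqref{eq:constraint_deterministic1_muy0} by the elementary evaluations of $t_0\tau_0$, $t_0\gamma_0$ and $\sum t_k/\tau_k$. For the third step, using $C_K\ge\sqrt{56}(k+k_0+1)L^{yy}_g$ for $k\le K-1$, the coefficient $7t_{k+1}(\theta_{k+1}L^{yx}_g)^2\Lambda/\sigma_{k+1}=7(L^{yx}_g)^2\Lambda(k+k_0+1)^2/C_K$ is at most $\tfrac{\sqrt{14}}{4}(L^{yx}_g)^2\Lambda(k+k_0+1)/L^{yy}_g$, which is only linear in $k$, whereas $t_k\tau_k/16=(k+k_0+1)^2\mu_f/64$ is quadratic; together with the linear $t_kL^{xx}_g\Lambda/2$ term this makes the $k$-th summand of \eqref{eq:TK} nonpositive once $k+k_0+1$ passes a crossover threshold $\widehat K$ independent of $K$, so finitely many summands survive and $\mathcal{T}_{\|\vlambda^*\|_1+1,K}=O(1)$ (boundedness of the surviving $\|\bx_{k+1}-\bx_k\|^2$ being handled exactly as in the proof of Theorem~\ref{thm:maintheorem_deterministic}); solving $\widehat K\le 0$ for this crossover gives the explicit condition \eqref{eq:k0removehatK_muy0}.

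The main obstacle is the $K$-dependence of $\sigma_k$: unlike the $\mu_y>0$ case the iterates themselves now depend on $K$, so the ``$O(1)$'' claim for $\mathcal{T}_{\|\vlambda^*\|_1+1,K}$ must be argued uniformly in $K$ (which is why we bound $7(L^{yx}_g)^2\Lambda(k+k_0+1)^2/C_K$ by a $K$-free linear function of $k$ rather than letting it shrink with $K$), and one must pick $C_K$ so that a single $\sigma_k$ simultaneously meets \eqref{eq:determine_cond3}, which forces $C_K\gtrsim K L^{yy}_g$, and keeps the nonsmooth-in-$\by$ error $\propto (H_g^y)^2\sum t_k\theta_k^2/\sigma_k$ at the $O(1/\sqrt K)$ level, which forces $C_K\gtrsim K^{3/2}H_g^y/D_y$; the $\max$ in the definition of $\sigma_k$ is precisely this compromise, and carrying the two cases through so that the constants in \eqref{eq:constraint_deterministic1_muy0} and \eqref{eq:k0removehatK_muy0} come out as stated is where the bulk of the bookkeeping lies.
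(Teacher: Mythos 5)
Your proposal follows essentially the same route as the paper's proof: instantiate Corollary~\ref{thm:mainprop_deterministic_simplified}, verify \eqref{eq:determine_cond3} via $t_k\sigma_k\equiv C_K\geq\sqrt{56}(K+k_0)L_g^{yy}$, split the $\max$ defining $\sigma_k$ to produce the $O(1/K)$ and $O(1/\sqrt{K})$ terms (with the same $7/\sqrt{56}=\sqrt{14}/4$ reduction of the quadratic coefficient to a $K$-free linear one), and locate the same crossover index $\widehat K$ yielding \eqref{eq:k0removehatK_muy0}. The details, including the constants, match the paper's argument, so there is nothing further to add.
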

\begin{proof}
It is easy to verify that \eqref{eq:thetaktk}, \eqref{eq:determine_cond4_a}, \eqref{eq:determine_cond4_b} and \eqref{eq:determine_cond4_c_simplified} hold.
According to the definitions of $t_k$, $\theta_k$, $\tau_k$, $\sigma_k$ and $\gamma_k$, \eqref{eq:determine_cond2_verified} still holds. So does \eqref{eq:determine_cond2_simplified}. In addition,
\begin{eqnarray*}
\frac{7t_{k+1}(\theta_{k+1}L^{yy}_g)^2}{\sigma_{k+1}}
\leq\frac{7(k+k_0+1)^2(L^{yy}_g)^2}{\sqrt{56}(K+k_0)L^{yy}_g}
\leq \frac{\sqrt{56}(K+k_0)L^{yy}_g}{8}
\leq \frac{t_k\sigma_k}{8},
\end{eqnarray*}
where the first and third inequalities are because of the definitions of $t_k$, $\theta_k$ and $\sigma_k$, and the second inequality is because $k\leq K-1$. This verifies \eqref{eq:determine_cond3}. Hence, \eqref{eq:choiceinQ1} and \eqref{eq:choiceinQ2} hold by Corollary~\ref{thm:mainprop_deterministic_simplified}. 

By the definitions of $t_k$, $\theta_k$, $\tau_k$, $\sigma_k$ and $\gamma_k$, we can show \eqref{eq:sumtk} and \eqref{eq:sumtktauk} as well as
\begin{align}
\label{eq:sumtkthetaksigmak_muy0}
\sum_{k=0}^{K}\frac{t_k\theta_k^2}{\sigma_k}= &\sum_{k=0}^{K}\frac{(k+k_0)^2}{(H_g^y/D_y)K^{\frac{1}{2}}(K+k_0)}
\leq \frac{2D_yK^{\frac{1}{2}}(K+k_0)}{H_g^y},
\end{align}
where the inequality is because $\sum_{k=0}^{K}(k+k_0)^2\leq(K+1)(K+k_0)^2\leq 2K(K+k_0)^2$. Conclusions \eqref{eq:objective_deterministic1} and \eqref{eq:constraint_deterministic1_muy0} are obtained by applying \eqref{eq:sumtk}, \eqref{eq:sumtktauk} and \eqref{eq:sumtkthetaksigmak_muy0} as well as the definition of $t_0$, $\tau_0$, $\sigma_0$ and $\gamma_0$ to the right-hand sides of \eqref{eq:choiceinQ1} and \eqref{eq:choiceinQ2}.

To show the second part of the conclusion,  we observe that \eqref{eq:Tlambda_deterministic} holds with 
\begin{eqnarray}
\label{eq:k0hatK_deterministic_muy0}
\widehat K=\left\lceil\max\left\{ \frac{112(L^{yx}_g)^2(\|\vlambda^*\|_1+1)}{\sqrt{14}L^{yy}_g\mu_f},~~\frac{16L_g^{xx}(\|\vlambda^*\|_1+1)}{\mu_f}\right\}-k_0-1\right\rceil.
\end{eqnarray}
In fact, when $k\geq\widehat K$, we have 
\begin{eqnarray*}
&&\frac{ 7t_{k+1}(\theta_{k+1}L^{yx}_g)^2(\|\vlambda^*\|_1+1)}{\sigma_{k+1}}+\frac{t_kL_g^{xx}(\|\vlambda^*\|_1+1)}{2}-\frac{t_k\tau_k}{16}\\\nonumber
&\leq&\frac{7(k+k_0+1)^2(L^{yx}_g)^2(\|\vlambda^*\|_1+1)}{\sqrt{56}(K+k_0)L^{yy}_g}+\frac{(k+k_0+1)L_g^{xx}(\|\vlambda^*\|_1+1)}{2}-\frac{(k+k_0+1)^2\mu_f}{16} \\\nonumber
&\leq&\frac{7(k+k_0+1)(L^{yx}_g)^2(\|\vlambda^*\|_1+1)}{\sqrt{56}L^{yy}_g}+\frac{(k+k_0+1)L_g^{xx}(\|\vlambda^*\|_1+1)}{2}-\frac{(k+k_0+1)^2\mu_f}{16} \\\nonumber
&\leq &0,
\end{eqnarray*}
where the first inequality is by the choices of $t_k$, $\theta_k$, $\tau_k$ and $\sigma_k$, the second inequality is because $k\leq K-1$, and the last is because $k\geq\widehat K$ with $\widehat K$ in \eqref{eq:k0hatK_deterministic_muy0}. Hence, when $k\geq\widehat K$, the summands in the definition of $\mathcal{T}_{\|\vlambda^*\|_1+1,K}$ with indexes $k\geq\widehat K$ are less than or equal to zero. The right-hand side of  \eqref{eq:Tlambda_deterministic} is a constant independent of $K$, so $\mathcal{T}_{\|\vlambda^*\|_1+1,K}=O(1)$. In addition, we have $\widehat K\leq 0$ and thus $\mathcal{T}_{\|\vlambda^*\|_1+1,K}\leq 0$ when \eqref{eq:k0removehatK_muy0} holds.
\qedsymbol{}
\end{proof}

Theorem~\ref{thm:maintheorem_deterministic_muy0} shows that, when $\mu_f>0$ but $\mu_y=0$, Algorithm~\ref{alg:PDHG} finds an $\epsilon$-optimal solution for \eqref{eq:SIP} in $O(\epsilon^{-1})$ iterations. This iteration complexity holds for both smooth and non-smooth cases because of the dominating term, i.e., $\frac{\sqrt{56}L^{yy}_gD_y^2(\|\vlambda^*\|_1+1)}{K}=O(\frac{1}{K})$ in \eqref{eq:constraint_deterministic1_muy0}. This complexity is reasonable because, when $\mu_y=0$, $g_i^*(\bx)$ is not necessarily smooth. Hence, \eqref{eq:SIP} can be viewed as a strongly convex  ($\mu_f>0$) optimization problem with finitely many non-smooth constraints $g_i^*(\bx)\leq0$ for $i=1,\dots,m$ whose optimal complexity is indeed $O(\epsilon^{-1})$.

\begin{theorem}
\label{thm:maintheorem_deterministic_muf0}
Suppose $\mu_f=0$, $\mu_y>0$, and parameters $t_k$, $\theta_k$, $\tau_k$, $\sigma_k$ and $\gamma_k$ in Algorithm~\ref{alg:PDHG} are chosen as
\begin{eqnarray*}
    &t_k=1,\quad \theta_k=1,\\
    &\tau_k =\max\{\tau',\tau\sqrt{K}\},\quad \sigma_k =(k+k_0+1) \mu_y,\quad    \gamma_k=60m(M_g^x)^2,
\end{eqnarray*}
where $\tau'\geq\max\left\{8(L_f+1),8(L^{yx}_g+L^{xx}_g)(\|\vlambda^*\|_1+1)\right\}$, $\tau\geq \sqrt{(H_g^{x})^2(\|\vlambda^*\|_1+1)^2+(H_f)^2}$ and $k_0\geq \max\left\{\sqrt{56}L^{yy}_g/\mu_y,14L^{yx}_g/\mu_y\right\}$. Then
\small
\begin{align}
\label{eq:objective_deterministic1_muf0}
f(\bar\vx_{K})-f(\bx^*)\leq \frac{\tau'\|\vx^*-\vx_0\|^2}{2K}+\frac{\tau\|\vx^*-\vx_0\|^2}{2\sqrt{K}}+\frac{4H_f}{\sqrt{K}}
\end{align}
\normalsize
and
\small
\begin{align}
\label{eq:constraint_deterministic1_muf0}
\|[\vg^*(\bar\vx_{K})]_+\|_\infty
\leq&\frac{\tau'\|\vx^*-\vx_0\|^2}{2K}+\frac{\tau\|\vx^*-\vx_0\|^2}{2\sqrt{K}}
+\frac{30m(M_g^x)^2(\|\vlambda^*\|_1+1)^2}{K}+\frac{(k_0+1)\mu_yD_y^2(\|\vlambda^*\|_1+1)}{2K}\\\nonumber
&+\frac{4\sqrt{(H_g^{x})^2(\|\vlambda^*\|_1+1)^2+(H_f)^2}}{\sqrt{K}}+\frac{7(H_g^y)^2(\|\vlambda^*\|_1+1)\ln\left((K+k_0+1)/k_0\right)}{K\mu_y}.
\end{align}
\normalsize
\end{theorem}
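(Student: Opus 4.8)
The plan is to follow the template of the proofs of Theorems~\ref{thm:maintheorem_deterministic} and \ref{thm:maintheorem_deterministic_muy0}: verify that the stated parameter schedule satisfies the hypotheses of Corollary~\ref{thm:mainprop_deterministic_simplified}, and then substitute the resulting partial sums into \eqref{eq:choiceinQ1} and \eqref{eq:choiceinQ2}. First I would check the conditions \eqref{eq:thetaktk}, \eqref{eq:determine_cond3}, \eqref{eq:determine_cond4_a}, \eqref{eq:determine_cond4_b}, \eqref{eq:determine_cond2_simplified} and \eqref{eq:determine_cond4_c_simplified}. Because $t_k\equiv1$ and $\theta_k\equiv1$, condition \eqref{eq:thetaktk} is trivial; with $\mu_f=0$ and $\tau_k$ constant in $k$, \eqref{eq:determine_cond4_a} holds with equality, and the telescoping differences for $\sigma_k=(k+k_0+1)\mu_y$ and for the constant $\gamma_k=60m(M_g^x)^2$ make \eqref{eq:determine_cond4_b} and \eqref{eq:determine_cond4_c_simplified} hold with equality. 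For \eqref{eq:determine_cond2_simplified} one evaluates $\tfrac{L_f}{2}+\tfrac{6m(M_g^x)^2}{60m(M_g^x)^2}+\tfrac{24m(M_g^x)^2}{60m(M_g^x)^2}=\tfrac{L_f}{2}+\tfrac12$, which is at most $\tfrac{\tau_k}{16}$ since $\tau_k\ge\tau'\ge8(L_f+1)$; for \eqref{eq:determine_cond3} one needs $56(L^{yy}_g)^2\le\sigma_k\sigma_{k+1}=(k+k_0+1)(k+k_0+2)\mu_y^2$, which follows from $k_0\ge\sqrt{56}L^{yy}_g/\mu_y$. This makes \eqref{eq:choiceinQ1} and \eqref{eq:choiceinQ2} available.

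Second, I would show that the term $\mathcal{T}_{\|\vlambda^*\|_1+1,K}$ appearing in \eqref{eq:choiceinQ2} is nonpositive, so that it can simply be discarded. Writing $\Lambda=\|\vlambda^*\|_1+1$, each summand of \eqref{eq:TK} is $\bigl[\tfrac{7(L^{yx}_g)^2\Lambda}{(k+k_0+2)\mu_y}+\tfrac{L_g^{xx}\Lambda}{2}-\tfrac{\tau_k}{16}\bigr]\|\bx_{k+1}-\bx_k\|^2$; using $k_0\ge14L^{yx}_g/\mu_y$ gives $\tfrac{7(L^{yx}_g)^2\Lambda}{(k+k_0+2)\mu_y}\le\tfrac{L^{yx}_g\Lambda}{2}$, so the bracket is bounded by $\tfrac{(L^{yx}_g+L^{xx}_g)\Lambda}{2}-\tfrac{\tau_k}{16}\le0$ because $\tau_k\ge\tau'\ge8(L^{yx}_g+L^{xx}_g)\Lambda$. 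In contrast to Theorems~\ref{thm:maintheorem_deterministic} and \ref{thm:maintheorem_deterministic_muy0}, no auxiliary index $\widehat K$ is needed here: every summand is already nonpositive.

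Third, I would evaluate the remaining sums: $\sum_{k=0}^{K-1}t_k=K$, $\sum_{k=0}^{K-1}t_k/\tau_k=K/\max\{\tau',\tau\sqrt K\}$, and $\sum_{k=0}^{K}t_k\theta_k^2/\sigma_k=\tfrac1{\mu_y}\sum_{j=k_0+1}^{K+k_0+1}\tfrac1j\le\tfrac1{\mu_y}\ln\tfrac{K+k_0+1}{k_0}$. Plugging these, together with $t_0=1$, $\tau_0=\max\{\tau',\tau\sqrt K\}$, $\gamma_0=60m(M_g^x)^2$ and $\sigma_0=(k_0+1)\mu_y$, into \eqref{eq:choiceinQ1} and \eqref{eq:choiceinQ2}, then using $\max\{\tau',\tau\sqrt K\}\le\tau'+\tau\sqrt K$ in the $\|\vx^*-\vx_0\|^2$ terms, $\max\{\tau',\tau\sqrt K\}\ge\tau\sqrt K$ together with $\tau\ge H_f$ and $\tau\ge\sqrt{(H_g^x)^2\Lambda^2+(H_f)^2}$ in the first-order (nonsmooth) error terms, and dropping $\mathcal{T}_{\|\vlambda^*\|_1+1,K}\le0$, should yield exactly \eqref{eq:objective_deterministic1_muf0} and \eqref{eq:constraint_deterministic1_muf0}.

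The whole argument is computational and I do not foresee a real obstacle; the one delicate point is the bookkeeping with $\tau_k=\max\{\tau',\tau\sqrt K\}$, which has to be used as $\ge\tau'$ when verifying \eqref{eq:determine_cond2_simplified} and the sign of $\mathcal{T}_{\|\vlambda^*\|_1+1,K}$, as $\ge\tau\sqrt K$ to squeeze the $H_f$ and $H_g^x$ contributions into $O(1/\sqrt K)$, and as $\le\tau'+\tau\sqrt K$ in the leading distance term; picking the right side of the maximum in each place is what produces the $O(\epsilon^{-1})$ iteration complexity for the $\mu_f=0,\ \mu_y>0$ case, with the extra logarithmic factor in the constraint-violation bound arising from the harmonic $\gamma$-sum.
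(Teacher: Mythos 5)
Your proposal is correct and follows essentially the same route as the paper's proof: verify the step-size conditions of Corollary~\ref{thm:mainprop_deterministic_simplified}, show $\mathcal{T}_{\|\vlambda^*\|_1+1,K}\leq 0$ directly from $k_0\geq 14L^{yx}_g/\mu_y$ and $\tau'\geq 8(L^{yx}_g+L^{xx}_g)(\|\vlambda^*\|_1+1)$ without any auxiliary index $\widehat K$, and substitute the sums $\sum t_k=K$, $\sum t_k/\tau_k\leq\sqrt{K}/\tau$ and $\sum t_k\theta_k^2/\sigma_k\leq\mu_y^{-1}\ln((K+k_0+1)/k_0)$ into \eqref{eq:choiceinQ1} and \eqref{eq:choiceinQ2}. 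The only slip is in your closing commentary: the logarithmic factor in \eqref{eq:constraint_deterministic1_muf0} comes from the harmonic $\sigma$-sum $\sum_k 1/\sigma_k$ (since $\sigma_k$ grows linearly), not from the $\gamma$-sum, which is constant here and does not enter the deterministic bound.
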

\begin{proof}
It is easy to verify that \eqref{eq:thetaktk}, \eqref{eq:determine_cond4_a}, \eqref{eq:determine_cond4_b} and \eqref{eq:determine_cond4_c_simplified} hold. According to the definitions of $t_k$, $\theta_k$, $\tau_k$, $\sigma_k$ and $\gamma_k$, we have 
\begin{eqnarray}
\label{eq:determine_cond2_verified_myf0}
&&\frac{t_kL_f}{2}+\frac{6mt_{k+1}(\theta_{k+1}M_g^x)^2}{\gamma_{k+1}}+\frac{24mt_{k+2}(\theta_{k+2}M_g^x)^2}{\gamma_{k+2}}\\\nonumber
&=&\frac{L_f}{2}+\frac{6m(M_g^x)^2}{60m (M_g^x)^2 }+\frac{24m(M_g^x)^2}{60m (M_g^x)^2 }\leq \frac{t_k\tau_k}{16},
\end{eqnarray}
where the  inequality is because $\tau_k\geq \tau'\geq 8 (L_f+1)$. This means
\eqref{eq:determine_cond2_simplified} holds. In addition, 
\begin{eqnarray}
\label{eq:determine_cond3_verified_muf0}
\frac{7t_{k+1}(\theta_{k+1}L^{yy}_g)^2}{\sigma_{k+1}}
=\frac{7(L^{yy}_g)^2}{(k+k_0+2)\mu_y}
\leq\frac{(k+k_0+1)\mu_y}{8}
=  \frac{t_k\sigma_k}{8},
\end{eqnarray}
where the  inequality is because $k_0\geq \sqrt{56}L^{yy}_g/\mu_y$. This verifies \eqref{eq:determine_cond3}.  Hence, \eqref{eq:choiceinQ1} and \eqref{eq:choiceinQ2} hold by Corollary~\ref{thm:mainprop_deterministic_simplified}. 

By the definitions of $t_k$, $\theta_k$, $\tau_k$, $\sigma_k$ and $\gamma_k$, we have
\begin{align}
\label{eq:sumtk_muf0}
\sum_{k=0}^{K-1}t_k=&K,\\
\label{eq:sumtktauk_muf0}
\sum_{k=0}^{K-1}\frac{t_k}{\tau_k}\leq&\frac{K}{\tau\sqrt{K}}\leq \frac{K^{\frac{1}{2}}}{\sqrt{(H_g^{x})^2(\|\vlambda^*\|_1+1)^2+(H_f)^2}},\\
\label{eq:sumtkthetaksigmak_muf0}
\sum_{k=0}^{K}\frac{t_k\theta_k^2}{\sigma_k}=&\sum_{k=0}^{K}\frac{1}{(k+k_0+1)\mu_y}\leq \frac{\ln\left((K+k_0+1)/k_0\right)}{\mu_y}.
\end{align}
Furthermore, we claim that 
$\mathcal{T}_{\|\vlambda^*\|_1+1,K}\leq 0$ in \eqref{eq:choiceinQ2}. In fact, for any $k\geq0$, we have 
\begin{eqnarray*}
&&\frac{ 7t_{k+1}(\theta_{k+1}L^{yx}_g)^2(\|\vlambda^*\|_1+1)}{\sigma_{k+1}}+\frac{t_kL_g^{xx}(\|\vlambda^*\|_1+1)}{2}\\\nonumber
&=&\frac{ 7(L^{yx}_g)^2(\|\vlambda^*\|_1+1)}{(k+k_0+2)\mu_y}+\frac{L_g^{xx}(\|\vlambda^*\|_1+1)}{2} \\\nonumber
&\leq&\frac{ L^{yx}_g(\|\vlambda^*\|_1+1)}{2}+\frac{L_g^{xx}(\|\vlambda^*\|_1+1)}{2}
\leq \frac{t_k\tau_k}{16},
\end{eqnarray*}
where the first inequality is because $k_0\geq 14L^{yx}_g/\mu_y$ and the second inequality is because $t_k\tau_k\geq\tau'\geq8(L^{yx}_g+L^{xx}_g)(\|\vlambda^*\|_1+1)$. Conclusions \eqref{eq:objective_deterministic1_muf0} and \eqref{eq:constraint_deterministic1_muf0} are then obtained by applying \eqref{eq:sumtk_muf0}, \eqref{eq:sumtktauk_muf0} and \eqref{eq:sumtkthetaksigmak_muf0} as well as the definition of $t_0$, $\tau_0$, $\sigma_0$ and $\gamma_0$ to the right-hand sides of \eqref{eq:choiceinQ1} and \eqref{eq:choiceinQ2}. 
\qedsymbol{}
\end{proof}

When the problem is smooth, we have $H_g^{x}=H_g^{y}=H_f=0$, so we can set $\tau=0$ in Theorem~\ref{thm:maintheorem_deterministic_muf0}. In this case, Theorem~\ref{thm:maintheorem_deterministic_muf0} shows that Algorithm~\ref{alg:PDHG} finds an $\epsilon$-optimal solution for \eqref{eq:SIP} in $\tilde O(\epsilon^{-1})$ iterations when $\mu_f=0$ but $\mu_y>0$. This iteration complexity increases to $O(\epsilon^{-2})$ when the problem is nonsmooth.


\begin{theorem}
\label{thm:maintheorem_deterministic_muf0my0}
Suppose $\mu_f=0$, $\mu_y=0$, and parameters $t_k$, $\theta_k$, $\tau_k$, $\sigma_k$ and $\gamma_k$ in Algorithm~\ref{alg:PDHG} are chosen as
\begin{eqnarray*}
    &t_k=1,\quad \theta_k=1,\\
    &\tau_k =\max\{\tau',\tau\sqrt{K}\},\quad \sigma_k =\max\{\sqrt{56}L^{yy}_g,14L^{yx}_g,(H_g^y/D_y)\sqrt{K}\},\quad    \gamma_k=60(M_g^x)^2,
\end{eqnarray*}
where where $\tau'\geq\max\left\{8(L_f+1),8(L^{yx}_g+L^{xx}_g)(\|\vlambda^*\|_1+1)\right\}$ and $\tau\geq \sqrt{(H_g^{x})^2(\|\vlambda^*\|_1+1)^2+(H_f)^2}$. Then \eqref{eq:objective_deterministic1_muf0} holds and 
\small
\begin{align}
\label{eq:constraint_deterministic1_muf0my0}
\|[\vg^*(\bar\vx_{K})]_+\|_\infty
\leq&\frac{\tau'\|\vx^*-\vx_0\|^2}{2K}+\frac{\tau\|\vx^*-\vx_0\|^2}{2\sqrt{K}}
+\frac{30m(M_g^x)^2(\|\vlambda^*\|_1+1)^2}{K}\\\nonumber
&+\frac{\max\{\sqrt{14}L^{yy}_g,7L^{yx}_g\}D_y^2(\|\vlambda^*\|_1+1)}{K}+\frac{15 H_g^y D_y(\|\vlambda^*\|_1+1)}{2\sqrt{K}}\\\nonumber
&+\frac{4\sqrt{(H_g^{x})^2(\|\vlambda^*\|_1+1)^2+(H_f)^2}}{\sqrt{K}}.
\end{align}
\normalsize
\end{theorem}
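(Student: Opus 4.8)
The plan is to instantiate Corollary~\ref{thm:mainprop_deterministic_simplified}, exactly as in the proofs of Theorems~\ref{thm:maintheorem_deterministic_muy0} and~\ref{thm:maintheorem_deterministic_muf0}, but now with \emph{both} $\mu_f=0$ and $\mu_y=0$. First I would check the parameter conditions. With $\mu_f=\mu_y=0$, conditions \eqref{eq:thetaktk}, \eqref{eq:determine_cond4_a}, \eqref{eq:determine_cond4_b} and \eqref{eq:determine_cond4_c_simplified} all reduce to equalities $1\cdot1=1$ and $\tau-\tau=\gamma-\gamma=\sigma-\sigma=0$ for the stated constant-in-$k$ choices $t_k=\theta_k=1$, $\tau_k=\max\{\tau',\tau\sqrt K\}$, $\gamma_k=60m(M_g^x)^2$, $\sigma_k=\max\{\sqrt{56}L^{yy}_g,14L^{yx}_g,(H_g^y/D_y)\sqrt K\}$. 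Condition \eqref{eq:determine_cond2_simplified} is verified by the same computation as in the proof of Theorem~\ref{thm:maintheorem_deterministic_muf0}: with $\gamma_{k+1}=\gamma_{k+2}=60m(M_g^x)^2$ the two momentum terms contribute $\tfrac{1}{10}+\tfrac{2}{5}$, so the left-hand side equals $\tfrac{L_f}{2}+\tfrac12=\tfrac{L_f+1}{2}$, which is $\le\tfrac{t_k\tau_k}{16}$ since $\tau_k\ge\tau'\ge8(L_f+1)$. Condition \eqref{eq:determine_cond3} becomes $7(L^{yy}_g)^2/\sigma_k\le\sigma_k/8$, i.e.\ $\sigma_k\ge\sqrt{56}L^{yy}_g$, which holds by the choice of $\sigma_k$. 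Hence \eqref{eq:choiceinQ1} and \eqref{eq:choiceinQ2} of Corollary~\ref{thm:mainprop_deterministic_simplified} apply.

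Next I would evaluate the scalar sums on the right-hand sides of \eqref{eq:choiceinQ1} and \eqref{eq:choiceinQ2}, namely $\sum_{k=0}^{K-1}t_k=K$; $\sum_{k=0}^{K-1}t_k/\tau_k\le K/(\tau\sqrt K)=\sqrt K/\tau$ using $\tau_k\ge\tau\sqrt K$; and $\sum_{k=0}^{K}t_k\theta_k^2/\sigma_k=(K+1)/\sigma_k\le(K+1)D_y/(H_g^y\sqrt K)$ using $\sigma_k\ge(H_g^y/D_y)\sqrt K$, which produces a term of order $H_g^yD_y(\|\vlambda^*\|_1+1)/\sqrt K$. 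I would also show $\mathcal{T}_{\|\vlambda^*\|_1+1,K}\le0$ by checking each coefficient in \eqref{eq:TK} is nonpositive: $\sigma_{k+1}\ge14L^{yx}_g$ bounds $\tfrac{7t_{k+1}(\theta_{k+1}L^{yx}_g)^2(\|\vlambda^*\|_1+1)}{\sigma_{k+1}}$ by $\tfrac12 L^{yx}_g(\|\vlambda^*\|_1+1)$, and then $\tau_k\ge\tau'\ge8(L^{yx}_g+L^{xx}_g)(\|\vlambda^*\|_1+1)$ gives $\tfrac{7t_{k+1}(\theta_{k+1}L^{yx}_g)^2(\|\vlambda^*\|_1+1)}{\sigma_{k+1}}+\tfrac{t_kL_g^{xx}(\|\vlambda^*\|_1+1)}{2}\le\tfrac{t_k\tau_k}{32}+\tfrac{t_k\tau_k}{32}=\tfrac{t_k\tau_k}{16}$. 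This is the same estimate used at the end of the proof of Theorem~\ref{thm:maintheorem_deterministic_muf0}.

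Finally I would substitute these into \eqref{eq:choiceinQ1} with $\bx=\bx^*$, and into \eqref{eq:choiceinQ2} with $\Lambda=\|\vlambda^*\|_1+1$, $\|\widehat\vlambda_K-\vlambda_0\|=\|\widehat\vlambda_K\|\le\|\vlambda^*\|_1+1$ and $\|\widehat\by_K-\vy_0\|_{\widehat\vlambda_K}^2\le\|\widehat\vlambda_K\|_1D_y^2$, splitting $t_0\tau_0=\max\{\tau',\tau\sqrt K\}\le\tau'+\tau\sqrt K$ and $\sigma_0\le\max\{\sqrt{56}L^{yy}_g,14L^{yx}_g\}+(H_g^y/D_y)\sqrt K$ to separate the $1/K$ and $1/\sqrt K$ contributions, and using $\tau\ge\sqrt{(H_g^x)^2(\|\vlambda^*\|_1+1)^2+(H_f)^2}\ge H_f$ to reduce $(H_f)^2/\tau$ to $H_f$ and $[(H_g^x)^2(\|\vlambda^*\|_1+1)^2+(H_f)^2]/\tau$ to $\sqrt{(H_g^x)^2(\|\vlambda^*\|_1+1)^2+(H_f)^2}$. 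Collecting terms (and using $\sqrt{56}/2=\sqrt{14}$, $14/2=7$, $K+1\le2K$) gives \eqref{eq:objective_deterministic1_muf0} and \eqref{eq:constraint_deterministic1_muf0my0}.

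The argument is essentially bookkeeping once Corollary~\ref{thm:mainprop_deterministic_simplified} is available, so there is no deep obstacle; the one genuinely nontrivial design point is the choice of the constant step size $\sigma_k=\max\{\sqrt{56}L^{yy}_g,14L^{yx}_g,(H_g^y/D_y)\sqrt K\}$. Because $\mu_y=0$ forces $\sigma_k$ to be (nonincreasing, hence) essentially constant in $k$, it must be simultaneously large enough to satisfy \eqref{eq:determine_cond3} ($\ge\sqrt{56}L^{yy}_g$) and to make $\mathcal{T}_{\Lambda,K}\le0$ ($\ge14L^{yx}_g$), while the initialization term $t_0\sigma_0D_y^2(\|\vlambda^*\|_1+1)/(2K)$ grows with $\sigma_0$ but the curvature term $7(H_g^y)^2(\|\vlambda^*\|_1+1)(K+1)/(K\sigma_k)$ shrinks with $\sigma_k$; balancing these two competing dependencies yields $\sigma_k=\Theta(\sqrt K)$ with factor $H_g^y/D_y$, which is exactly what produces the $O(1/\sqrt K)$, i.e.\ $O(\epsilon^{-2})$, rate in the nonsmooth regime. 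In the smooth case $H_g^x=H_g^y=H_f=0$ one may take $\tau=0$ and $\sigma_k$ the constant $\max\{\sqrt{56}L^{yy}_g,14L^{yx}_g\}$, and all $1/\sqrt K$ terms vanish, leaving the $O(1/K)$ leading behavior.
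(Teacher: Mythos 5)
Your proposal is correct and follows essentially the same route as the paper's own proof: verify the step-size conditions (reusing the computations \eqref{eq:determine_cond2_verified_myf0} and \eqref{eq:determine_cond3_verified_muf0my0}), invoke Corollary~\ref{thm:mainprop_deterministic_simplified}, evaluate the sums \eqref{eq:sumtk_muf0}, \eqref{eq:sumtktauk_muf0}, \eqref{eq:sumtkthetaksigmak_muf0my0}, and kill $\mathcal{T}_{\|\vlambda^*\|_1+1,K}$ via \eqref{eq:TKnonpositive_muf0my0}. The only discrepancy is the factor of $m$ in $\gamma_k$ (your $60m(M_g^x)^2$ versus the statement's $60(M_g^x)^2$), but your reading is the one consistent with the verification of \eqref{eq:determine_cond2_simplified} that the paper itself reuses.
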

\begin{proof}
It is easy to verify that \eqref{eq:thetaktk}, \eqref{eq:determine_cond4_a}, \eqref{eq:determine_cond4_b} and \eqref{eq:determine_cond4_c_simplified} hold. According to the definitions of $t_k$, $\theta_k$, $\tau_k$, $\sigma_k$ and $\gamma_k$, 
\eqref{eq:determine_cond2_verified_myf0} still holds. So does \eqref{eq:determine_cond2_simplified}. In addition,
\begin{eqnarray}
\label{eq:determine_cond3_verified_muf0my0}
\frac{7t_{k+1}(\theta_{k+1}L^{yy}_g)^2}{\sigma_{k+1}}
\leq\frac{7(L^{yy}_g)^2}{\sqrt{56}L^{yy}_g}
\leq\frac{t_k\sigma_k}{8},
\end{eqnarray}
where the second inequality is because $\sigma_k\geq \sqrt{56}L^{yy}_g$. This verifies \eqref{eq:determine_cond3}. Hence, \eqref{eq:choiceinQ1} and \eqref{eq:choiceinQ2} hold by Corollary~\ref{thm:mainprop_deterministic_simplified}. 

By the definitions of $t_k$, $\theta_k$, $\tau_k$, $\sigma_k$ and $\gamma_k$, we have \eqref{eq:sumtk_muf0}, \eqref{eq:sumtktauk_muf0} and 
\begin{align}
\label{eq:sumtkthetaksigmak_muf0my0}
\sum_{k=0}^{K}\frac{t_k\theta_k^2}{\sigma_k}\leq&\frac{K}{(H_g^y/D_y)\sqrt{K}}= \frac{D_y\sqrt{K}}{H_g^y}.
\end{align}
Furthermore, we claim that 
$\mathcal{T}_{\|\vlambda^*\|_1+1,K}\leq 0$ in \eqref{eq:choiceinQ2}. In fact, for any $k\geq0$, we have 
\begin{eqnarray}
\nonumber
&&\frac{ 7t_{k+1}(\theta_{k+1}L^{yx}_g)^2(\|\vlambda^*\|_1+1)}{\sigma_{k+1}}+\frac{t_kL_g^{xx}(\|\vlambda^*\|_1+1)}{2}\\\label{eq:TKnonpositive_muf0my0}
&\leq&\frac{ L^{yx}_g(\|\vlambda^*\|_1+1)}{2}+\frac{L_g^{xx}(\|\vlambda^*\|_1+1)}{2}
\leq \frac{t_k\tau_k}{16},
\end{eqnarray}
where the first inequality is because $\sigma_{k+1}\geq 14L^{yx}_g$ and the second inequality is because $t_k\tau_k\geq\tau'\geq8(L^{yx}_g+L^{xx}_g)(\|\vlambda^*\|_1+1)$. Conclusions \eqref{eq:objective_deterministic1_muf0} and \eqref{eq:constraint_deterministic1_muf0my0} are then obtained by applying \eqref{eq:sumtk_muf0}, \eqref{eq:sumtktauk_muf0} and \eqref{eq:sumtkthetaksigmak_muf0my0} as well as the definition of $t_0$, $\tau_0$, $\sigma_0$ and $\gamma_0$ to the right-hand sides of \eqref{eq:choiceinQ1} and \eqref{eq:choiceinQ2}. 
\qedsymbol{}
\end{proof}

When the problem is smooth, we have $H_g^{x}=H_g^{y}=H_f=0$, so we can set $\tau=\sigma=0$ in Theorem~\ref{thm:maintheorem_deterministic_muf0my0}. In this case, Theorem~\ref{thm:maintheorem_deterministic_muf0my0} shows that Algorithm~\ref{alg:PDHG} finds an $\epsilon$-optimal solution for \eqref{eq:SIP} in $O(\epsilon^{-1})$ iterations when $\mu_f=\mu_y=0$. This iteration complexity increases to $O(\epsilon^{-2})$ when the problem is nonsmooth.

\subsection{Convergence properties in stochastic case}
In this section, we present our main theoretical results on the convergence rate of Algorithm~\ref{alg:SPDHG}. Different from the deterministic case, the order of iteration complexity of our algorithm does not change with strong convexity parameters $\mu_f$ and $\mu_y$. In particular, Algorithm~\ref{alg:SPDHG} has an iteration complexity $O(\epsilon^{-2})$ no matter $\mu_f$ and $\mu_y$ are positive or not. This is a finding consistent with literature, e.g., \cite{boob2022stochastic}, and different from unconstrained stochastic optimization where the  complexity can be improved from $O(\epsilon^{-2})$ to $O(\epsilon^{-1})$ when $\mu_f>0$. For this reason, in this section, we just assume $\mu_f=\mu_y=0$.

Similar to Corollary \ref{thm:mainprop_deterministic_simplified}, we first derive from Proposition~\ref{thm:mainprop_deterministic} generic upper bounds on the objective gap and constraint violation of $\bar\vx_{K}$.
\begin{corollary}
\label{thm:mainprop_stochastic_simplified}
Suppose $\mu_f=\mu_y=0$ and  parameters $t_k$, $\theta_k$, $\tau_k$, $\sigma_k$ and $\gamma_k$ in Algorithm~\ref{alg:SPDHG} satisfies \eqref{eq:thetaktk},  \eqref{eq:determine_cond2}, \eqref{eq:determine_cond3}, \eqref{eq:determine_cond4_c} and the following two inequalities
\begin{align}
\label{eq:determine_cond4_a_mufmy0}
&  t_{k+1}\tau_{k+1}-t_{k}\tau_{k}\leq0,\\
\label{eq:determine_cond4_b_mufmy0}
& t_{k+1}\sigma_{k+1}- t_k\sigma_k \leq 0.
\end{align}
It holds that
\small
\begin{align}
\label{eq:choiceinQ1_stochastic}
\mathbb{E}\left[f(\bar\vx_{K})-f(\bx^*)\right]
\leq&\frac{1}{\sum_{k=0}^{K-1}t_k}\frac{t_0\tau_0}{2}\|\vx^*-\vx_0\|^2
+\frac{1}{\sum_{k=0}^{K-1}t_k}\sum_{k=0}^{K-1}\frac{4t_k(H_f)^2}{\tau_k}\\\nonumber
&+\frac{2\sigma_{f'}^2}{\sum_{k=0}^{K-1}t_k}\sum_{k=0}^{K-1}\frac{t_k}{\tau_k}
+\frac{30m\sigma_g^2}{\sum_{k=0}^{K-1}t_k}\sum_{k=0}^{K-1}\frac{t_k}{\gamma_k}
\end{align}
\normalsize
and
\small
\begin{align}
\label{eq:choiceinQ2_stochastic}
&\mathbb{E}\|[\vg^*(\bar\vx_{K})]_+\|_\infty\\\nonumber
\leq&\frac{1}{\sum_{k=0}^{K-1}t_k}\left[t_0\tau_0\|\bx-\bx_0\|^2+t_0\gamma_0(\|\vlambda^*\|_1+1)^2+t_0\sigma_0D_y^2(\|\vlambda^*\|_1+1)+ \mathbb{E}\mathcal{T}_{\|\vlambda^*\|_1+1,K}\right]\\\nonumber
&+\frac{4\left[(H_g^{x})^2(\|\vlambda^*\|_1+1)^2+(H_f)^2\right]}{\sum_{k=0}^{K-1}t_k}\sum_{k=0}^{K-1}\frac{t_k}{\tau_k}+\frac{7(H_g^y)^2(\|\vlambda^*\|_1+1)}{\sum_{k=0}^{K-1}t_k}\sum_{k=0}^{K}\frac{t_k\theta_k^2}{\sigma_k}\\\nonumber
&+\frac{(\|\vlambda^*\|_1+1)}{\sum_{k=0}^{K-1}t_k}
\sqrt{10m\sigma_{g'}^2D_y^2\sum_{k=0}^{K-1}t_k^2}
+\frac{2\sigma_{f'}^2+4\sigma_{g'}^2(\|\vlambda^*\|_1+1)^2}{\sum_{k=0}^{K-1}t_k}\sum_{k=0}^{K-1}\frac{t_k}{\tau_k}\\\nonumber
&+\frac{15m(\|\vlambda^*\|_1+1)\sigma_{g'}^2}{\sum_{k=0}^{K-1}t_k}\sum_{k=0}^{K-1}\frac{t_k}{\sigma_k}
+\frac{30m\sigma_g^2}{\sum_{k=0}^{K-1}t_k}\sum_{k=0}^{K-1}\frac{t_k}{\gamma_k},
\end{align}
\normalsize
where $\mathcal{T}_{\|\vlambda^*\|_1+1,K}$ is defined in \eqref{eq:TK} with $\Lambda=\|\vlambda^*\|_1+1$.
\end{corollary}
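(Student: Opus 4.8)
The plan is to derive \eqref{eq:choiceinQ1_stochastic} and \eqref{eq:choiceinQ2_stochastic} from Proposition~\ref{thm:mainprop_deterministic} exactly as Corollary~\ref{thm:mainprop_deterministic_simplified} is derived in the deterministic case, the only extra bookkeeping being that the auxiliary iterates $\tilde\vlambda_k,\tilde\vy_k$ no longer collapse to $\vlambda_0,\vy_0$ (so \eqref{eq:tildetermzero} is unavailable) and must instead be eliminated by sign considerations. First I would observe that, under $\mu_f=\mu_y=0$, conditions \eqref{eq:determine_cond4_a_mufmy0} and \eqref{eq:determine_cond4_b_mufmy0} are literally \eqref{eq:determine_cond4_a} and \eqref{eq:determine_cond4_b}; together with the assumed \eqref{eq:thetaktk}, \eqref{eq:determine_cond2}, \eqref{eq:determine_cond3}, \eqref{eq:determine_cond4_c} this means all hypotheses of Proposition~\ref{thm:mainprop_deterministic} hold, so inequality \eqref{eq:Qsum5} is available for every deterministic $\bx\in\cX$, stochastic $\by\in\cY$ and stochastic $\vlambda\in\mathbb{R}_+^m$ with $\|\vlambda\|_1\le\Lambda$ almost surely. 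Before specializing, I would also note that, thanks to \eqref{eq:determine_cond4_c} (which gives $t_{k+1}\gamma_{k+1}-t_k\gamma_k\le-8m\sigma_{g'}^2 t_k/\tau_k\le0$) and \eqref{eq:determine_cond4_b_mufmy0}, the four residual terms on the right-hand side of \eqref{eq:Qsum5} that involve $\tilde\vlambda_{k+1},\tilde\vy_{k+1},\tilde\vlambda_K,\tilde\vy_K$ have non-positive coefficients and can simply be discarded; with $\mu_f=0$ the leading term reduces to $\tfrac{t_0\tau_0}{2}\|\bx-\bx_0\|^2$.

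For the objective gap I would take $\bx=\bx^*$, $\vlambda=\vlambda_0=\mathbf{0}$ and $\by=\by_0$ in the simplified \eqref{eq:Qsum5}, so that $\Lambda=0$: then $\|\vlambda-\vlambda_0\|^2=0$, $\|\vy-\vy_0\|_{\vlambda}^2=0$, $\mathcal{T}_{0,K}\le0$ by \eqref{eq:TK} (its coefficient is $-t_k\tau_k/16\le0$), and every term carrying a factor $\Lambda$ or $\Lambda^{1/2}$ vanishes; what survives is $\tfrac{t_0\tau_0}{2}\|\bx^*-\bx_0\|^2+\sum_k\tfrac{4t_k(H_f)^2}{\tau_k}+2\sigma_{f'}^2\sum_k\tfrac{t_k}{\tau_k}+30m\sigma_g^2\sum_k\tfrac{t_k}{\gamma_k}$. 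Plugging this into the right-hand side of \eqref{eq:Qsum_averaged_obj}, taking expectations, and dividing by $\sum_{k=0}^{K-1}t_k$ gives \eqref{eq:choiceinQ1_stochastic}.

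For the infeasibility bound I would instead set $\bx=\bx^*$, $\vlambda=\widehat\vlambda_K$ and $\by=\widehat\by_K$ from \eqref{eq:lambda_for_barxK}--\eqref{eq:y_for_barxK}; these are measurable functions of $\bar\vx_K$, and by \eqref{eq:lambda_for_barxK_bound} they satisfy $\|\widehat\vlambda_K\|_1\le\|\vlambda^*\|_1+1=:\Lambda$ almost surely, which is exactly the admissibility required by Proposition~\ref{thm:mainprop_deterministic}. Using $\vlambda_0=\mathbf{0}$ I would bound $\|\widehat\vlambda_K-\vlambda_0\|^2=\|\widehat\vlambda_K\|^2\le\|\widehat\vlambda_K\|_1^2\le(\|\vlambda^*\|_1+1)^2$ and, via Assumption~\ref{assume:basic}b, $\|\widehat\by_K-\vy_0\|_{\widehat\vlambda_K}^2\le\|\widehat\vlambda_K\|_1 D_y^2\le(\|\vlambda^*\|_1+1)D_y^2$; after discarding the non-positive $\tilde\vlambda$/$\tilde\vy$ residuals as above, the simplified \eqref{eq:Qsum5} with $\Lambda=\|\vlambda^*\|_1+1$ becomes an upper bound consisting of $t_0\tau_0\|\bx^*-\bx_0\|^2$, $t_0\gamma_0(\|\vlambda^*\|_1+1)^2$, $t_0\sigma_0 D_y^2(\|\vlambda^*\|_1+1)$, $\mathbb{E}\mathcal{T}_{\|\vlambda^*\|_1+1,K}$, and the various $H$- and $\sigma$-tail sums already displayed in \eqref{eq:Qsum5}. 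Feeding this into the right-hand side of \eqref{eq:Qsum_averaged_cst}, taking expectations, and dividing by $\sum_{k=0}^{K-1}t_k$ yields \eqref{eq:choiceinQ2_stochastic}.

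The main obstacle here is not any single estimate — everything reduces to Proposition~\ref{thm:mainprop_deterministic} — but rather the careful sign bookkeeping that replaces the deterministic shortcut \eqref{eq:tildetermzero}: one must check that each term in \eqref{eq:Qsum5} involving the auxiliary sequences has a coefficient that is provably non-positive under the stated parameter conditions, so that it may be dropped, and one must confirm that $\widehat\vlambda_K,\widehat\by_K$ qualify as the "stochastic $\vlambda,\by$'' allowed in the proposition. Both are routine once the structure is laid out, so I expect the proof to be short.
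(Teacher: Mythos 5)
Your proposal is correct and follows essentially the same route as the paper: verify that \eqref{eq:determine_cond4_a_mufmy0}--\eqref{eq:determine_cond4_b_mufmy0} are just \eqref{eq:determine_cond4_a}--\eqref{eq:determine_cond4_b} when $\mu_f=\mu_y=0$, invoke Proposition~\ref{thm:mainprop_deterministic} to get \eqref{eq:Qsum5}, then specialize to $\vlambda=\mathbf{0}$, $\by=\by_0$, $\Lambda=0$ for \eqref{eq:choiceinQ1_stochastic} via \eqref{eq:Qsum_averaged_obj}, and to $\widehat\vlambda_K$, $\widehat\by_K$, $\Lambda=\|\vlambda^*\|_1+1$ for \eqref{eq:choiceinQ2_stochastic} via \eqref{eq:Qsum_averaged_cst}. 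Your explicit sign check that the residual $\tilde\vlambda$/$\tilde\vy$ terms in \eqref{eq:Qsum5} have non-positive coefficients under \eqref{eq:determine_cond4_c} and \eqref{eq:determine_cond4_b_mufmy0} is a detail the paper leaves implicit, and it is handled correctly.
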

\begin{proof}
Conditions \eqref{eq:determine_cond4_a_mufmy0} and \eqref{eq:determine_cond4_b_mufmy0} are just \eqref{eq:determine_cond4_a} and \eqref{eq:determine_cond4_b} when $\mu_f=\mu_y=0$. Therefore, by Proposition~\ref{thm:mainprop_deterministic}, \eqref{eq:Qsum5} holds for any deterministic $\bx\in\cX$, stochastic $\by\in\cY$ and stochastic $\vlambda\in\mathbb{R}_+^m$ satisfying $\|\vlambda\|_1\leq \Lambda$ almost surely. Recall that $\vlambda_0=\mathbf{0}$ and observe that $\mathcal{T}_{0,K}\leq 0$. Applying \eqref{eq:Qsum5} to the right-hand side of \eqref{eq:Qsum_averaged_obj} with $\Lambda=0$ leads to \eqref{eq:choiceinQ1}. Recall \eqref{eq:lambda_for_barxK}, \eqref{eq:y_for_barxK} and \eqref{eq:lambda_for_barxK_bound}. Inequality \eqref{eq:choiceinQ2} is thus obtained by applying \eqref{eq:Qsum5} to the right-hand side of \eqref{eq:Qsum_averaged_cst} with $\Lambda= \|\vlambda^*\|_1+1$ and using the facts that $\|\widehat\vlambda_K-\vlambda_0\|=\|\widehat\vlambda_K\|\leq\|\vlambda^*\|_1+1$ and that $\|\widehat\by_K-\vy_0\|_{\widehat\vlambda_K}^2\leq \|\widehat\vlambda_K\|_1D_y^2$. 
\qedsymbol{}
\end{proof}

We then present the convergence property of Algorithm~\ref{alg:SPDHG} in the following theorem.

\begin{theorem}
\label{thm:maintheorem_deterministic_muf0my0_stoc}
Suppose $\mu_f=0$, $\mu_y=0$, and parameters $t_k$, $\theta_k$, $\tau_k$, $\sigma_k$ and $\gamma_k$ in Algorithm~\ref{alg:SPDHG} are chosen as
\small
\begin{eqnarray*}
    &t_k=1,\quad \theta_k=1,\\
 &\tau_k =\max\{\tau',\tau\sqrt{K}\},\quad \sigma_k =\max\left\{\sqrt{56}L^{yy}_g,14L^{yx}_g,\left(\sqrt{(H_g^y)^2+\sigma_{g'}^2}/D_y\right)\sqrt{K}\right\},\\  &\gamma_k=60(M_g^x)^2+\frac{60m\sigma_{g'}(2K-k)}{\sqrt{K}},
\end{eqnarray*}
\normalsize
where $\tau'\geq\max\left\{4(L_f+1+\sigma_{g'}),8(L^{yx}_g+L^{xx}_g)(\|\vlambda^*\|_1+1)\right\}$ and 
$$\tau\geq \sqrt{(H_g^{x})^2(\|\vlambda^*\|_1+1)^2+(H_f)^2+\sigma_{g'}^2(\|\vlambda^*\|_1+1)+\sigma_{f'}^2}.$$ 
Then 
\small
\begin{align}
\label{eq:objective_deterministic1_muf0my0_stoc}
\mathbb{E}\left[f(\bar\vx_{K})-f(\bx^*)\right]\leq& 
\frac{\tau'\|\vx^*-\vx_0\|^2}{2K}+\frac{\tau\|\vx^*-\vx_0\|^2}{2\sqrt{K}}+\frac{4H_f}{\sqrt{K}}+\frac{2\sigma_{f'}}{\sqrt{K}}+\frac{\sigma_{g'}}{2\sqrt{K}}
\end{align}
\normalsize
and
\small
\begin{align}
\label{eq:constraint_deterministic1_muf0my0_stoc}
&\mathbb{E}\|[\vg^*(\bar\vx_{K})]_+\|_\infty\\\nonumber
\leq&\frac{\tau'\|\vx^*-\vx_0\|^2}{2K}+\frac{\tau\|\vx^*-\vx_0\|^2}{2\sqrt{K}}
+\frac{60(M_g^x)^2(\|\vlambda^*\|_1+1)^2}{K}
+\frac{60m\sigma_{g'}(\|\vlambda^*\|_1+1)^2}{\sqrt{K}}\\\nonumber
&+\frac{\max\{\sqrt{56}L^{yy}_g,14L^{yx}_g\}D_y^2(\|\vlambda^*\|_1+1)}{K}+\frac{8\sqrt{(H_g^y)^2+\sigma_{g'}^2}D_y(\|\vlambda^*\|_1+1)}{\sqrt{K}}\\\nonumber
&+\frac{4\sqrt{(H_g^{x})^2(\|\vlambda^*\|_1+1)^2+(H_f)^2}}{\sqrt{K}}\\\nonumber
&+\frac{\sqrt{10m\sigma_{g'}D_y}(\|\vlambda^*\|_1+1)}{\sqrt{K}}
+\frac{4\sqrt{\sigma_{f'}^2+\sigma_{g'}^2(\|\vlambda^*\|_1+1)^2}}{\sqrt{K}}+\frac{15m(\|\vlambda^*\|_1+1)\sigma_{g'}D_y}{\sqrt{K}}
+\frac{\sigma_g}{2\sqrt{K}}.
\end{align}
\normalsize
\end{theorem}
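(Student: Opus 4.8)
The plan is to mirror the deterministic proofs — in particular the proof of Theorem~\ref{thm:maintheorem_deterministic_muf0my0} — but to invoke Corollary~\ref{thm:mainprop_stochastic_simplified} in place of Corollary~\ref{thm:mainprop_deterministic_simplified}. The first task is to verify that the prescribed $t_k,\theta_k,\tau_k,\sigma_k,\gamma_k$ satisfy the hypotheses of Corollary~\ref{thm:mainprop_stochastic_simplified}, namely \eqref{eq:thetaktk}, \eqref{eq:determine_cond2}, \eqref{eq:determine_cond3}, \eqref{eq:determine_cond4_c}, \eqref{eq:determine_cond4_a_mufmy0} and \eqref{eq:determine_cond4_b_mufmy0}. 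Since $t_k\equiv 1$, $\theta_k\equiv 1$ and $\tau_k,\sigma_k$ are constant in $k$, conditions \eqref{eq:thetaktk}, \eqref{eq:determine_cond4_a_mufmy0}, \eqref{eq:determine_cond4_b_mufmy0} are immediate, and \eqref{eq:determine_cond3} reduces to $\sigma_k^2\ge 56(L^{yy}_g)^2$, which holds by the choice $\sigma_k\ge\sqrt{56}L^{yy}_g$ exactly as in \eqref{eq:determine_cond3_verified_muf0my0}. For \eqref{eq:determine_cond2} I would bound the two $(M_g^x)^2$ terms by an absolute constant via the first summand of $\gamma_k$ and the two $\sigma_{g'}^2$ terms via the lower bound $\gamma_k\ge 60m\sigma_{g'}\sqrt K$, and then absorb what remains together with $\tfrac{L_f}{2}$ into $\tfrac{\tau_k}{16}$ using the lower bound imposed on $\tau'$; this is the stochastic analogue of \eqref{eq:determine_cond2_verified_myf0}.

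The only genuinely new hypothesis compared to the deterministic theorems is \eqref{eq:determine_cond4_c}, which now carries the extra penalty $\tfrac{8m\sigma_{g'}^2 t_k}{\tau_k}$ arising from the variance of the stochastic constraint value/gradient oracle used in the $\vlambda$-step. This is precisely why $\gamma_k$ is chosen with the affine-in-$k$ profile $\gamma_k=60(M_g^x)^2+\tfrac{60m\sigma_{g'}(2K-k)}{\sqrt K}$: it decreases in $k$ by the fixed amount $\tfrac{60m\sigma_{g'}}{\sqrt K}$ per step, so since $\tau_k\ge\tau\sqrt K\ge\sigma_{g'}\sqrt K$ one has $\tfrac{8m\sigma_{g'}^2}{\tau_k}\le\tfrac{8m\sigma_{g'}}{\sqrt K}<\tfrac{60m\sigma_{g'}}{\sqrt K}$, which forces $t_{k+1}\gamma_{k+1}-t_k\gamma_k+\tfrac{8m\sigma_{g'}^2 t_k}{\tau_k}\le 0$. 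At the same time this profile keeps $\gamma_k$ of order $\sqrt K$ throughout, so that the $\gamma_k$-denominated sums below remain $O(\sqrt K)$ and the $O(K^{-1/2})$ rate is preserved. I expect reconciling these two competing demands on $\gamma_k$ to be the one nonroutine point of the argument; the rest is bookkeeping.

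Once the hypotheses are in place, \eqref{eq:choiceinQ1_stochastic} and \eqref{eq:choiceinQ2_stochastic} apply, and it remains to evaluate the elementary sums $\sum_{k=0}^{K-1}t_k=K$, $\sum_{k=0}^{K-1}t_k^2=K$, $\sum_{k=0}^{K-1}\tfrac{t_k}{\tau_k}\le\tfrac{\sqrt K}{\tau}$ (cf.\ \eqref{eq:sumtktauk_muf0}), $\sum_{k=0}^{K}\tfrac{t_k\theta_k^2}{\sigma_k}\le\tfrac{K+1}{\sigma_0}$, and $\sum_{k=0}^{K-1}\tfrac{t_k}{\gamma_k}=O\!\left(\tfrac{\sqrt K}{m\sigma_{g'}}\right)$, and to show $\mathcal{T}_{\|\vlambda^*\|_1+1,K}\le 0$. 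The last fact is the same argument as in Theorem~\ref{thm:maintheorem_deterministic_muf0my0}: with $t_k=\theta_k=1$ each summand of \eqref{eq:TK} equals $\tfrac{7(L^{yx}_g)^2(\|\vlambda^*\|_1+1)}{\sigma_{k+1}}+\tfrac{L_g^{xx}(\|\vlambda^*\|_1+1)}{2}-\tfrac{\tau_k}{16}$, which is nonpositive because $\sigma_{k+1}\ge 14L^{yx}_g$ caps the first term by $\tfrac{L^{yx}_g(\|\vlambda^*\|_1+1)}{2}$ and $\tau_k\ge\tau'\ge 8(L^{yx}_g+L_g^{xx})(\|\vlambda^*\|_1+1)$ then closes it — this is exactly \eqref{eq:TKnonpositive_muf0my0}.

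Finally I would substitute these quantities into the right-hand sides of \eqref{eq:choiceinQ1_stochastic} and \eqref{eq:choiceinQ2_stochastic}: split $\tau_0=\max\{\tau',\tau\sqrt K\}$ into $\tau'$ and $\tau\sqrt K$ to produce the two leading $\|\vx^*-\vx_0\|^2$ terms, use the defining lower bounds on $\tau$ (so that $\tau\ge H_f$, $\tau\ge\sigma_{f'}$, $\tau\ge\sigma_{g'}$, $\tau\ge H_g^x(\|\vlambda^*\|_1+1)$) to convert each squared-constant-over-$\tau_k$ contribution into the displayed $O(K^{-1/2})$ term, use $\sigma_0\ge\sqrt{(H_g^y)^2+\sigma_{g'}^2}\,\sqrt K/D_y$ for the $(H_g^y)^2$- and $\sigma_{g'}^2$-over-$\sigma_k$ contributions, and note that the bias term $\tfrac{\|\vlambda^*\|_1+1}{\sum_k t_k}\sqrt{10m\sigma_{g'}^2D_y^2\sum_k t_k^2}$ collapses to $O\!\left(\tfrac{\sqrt m\,\sigma_{g'}D_y(\|\vlambda^*\|_1+1)}{\sqrt K}\right)$ because $\sum_k t_k=\sum_k t_k^2=K$. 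Collecting the resulting terms yields \eqref{eq:objective_deterministic1_muf0my0_stoc} and \eqref{eq:constraint_deterministic1_muf0my0_stoc}. Apart from the design of $\gamma_k$, the only care required is routing each of the three variance constants $\sigma_{f'},\sigma_g,\sigma_{g'}$ through the correct denominator among $\tau_k$, $\gamma_k$, $\sigma_k$, which is dictated by which of the three updates of Algorithm~\ref{alg:SPDHG} it enters.
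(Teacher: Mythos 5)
Your proposal matches the paper's own proof essentially step for step: the same verification of \eqref{eq:thetaktk}, \eqref{eq:determine_cond2}, \eqref{eq:determine_cond3}, \eqref{eq:determine_cond4_c}, \eqref{eq:determine_cond4_a_mufmy0} and \eqref{eq:determine_cond4_b_mufmy0} (including the key observation that the affine-in-$k$ profile of $\gamma_k$ absorbs the $8m\sigma_{g'}^2t_k/\tau_k$ penalty via $\tau\geq\sigma_{g'}$ while keeping $\gamma_k=\Theta(\sqrt{K})$), the same appeal to Corollary~\ref{thm:mainprop_stochastic_simplified}, the same elementary sums, and the same argument \eqref{eq:TKnonpositive_muf0my0} for $\mathcal{T}_{\|\vlambda^*\|_1+1,K}\leq 0$. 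The plan is correct and requires no changes.
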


\begin{proof}
It is easy to verify that \eqref{eq:thetaktk}, \eqref{eq:determine_cond4_a_mufmy0} and \eqref{eq:determine_cond4_b_mufmy0} hold. In addition, 
\small
\begin{eqnarray*}
&&t_{k+1}\gamma_{k+1}+\frac{8m\sigma_{g'}^2t_k}{\tau_k}\\
&\leq& 60(M_g^x)^2+\frac{60m\sigma_{g'}(2K-k-1)}{ \sqrt{K}}+\frac{60m\sigma_{g'}^2}{ \tau\sqrt{K}}\\
&\leq& 60(M_g^x)^2+\frac{60m\sigma_{g'}(2K-k)}{ \sqrt{K}}=t_{k}\gamma_{k},
\end{eqnarray*}
\normalsize
where the first inequality is by the definitions of $t_k$, $\theta_k$, $\gamma_k$ and $\tau_k$ and the second inequality is because $\tau\geq \sigma_{g'}$. This verifies \eqref{eq:determine_cond4_c}. By definitions of $t_k$, $\theta_k$, $\gamma_k$ and $\tau_k$ again, we have
\small
\begin{eqnarray*}
&&\frac{t_kL_f}{2}+\frac{6mt_{k+1}\theta_{k+1}^2(M_g^x)^2}{\gamma_{k+1}}+\frac{24mt_{k+2}\theta_{k+2}^2(M_g^x)^2}{\gamma_{k+2}}+\frac{18m\sigma_{g'}^2t_{k+1}}{\gamma_{k+1}}+\frac{12m\sigma_{g'}^2t_{k+2}}{\gamma_{k+2}}\\\nonumber
&\leq&\frac{L_f}{2}+ \frac{30(M_g^x)^2}{60(M_g^x)^2}+\frac{30 m\sigma_{g'}^2}{60m\sigma_{g'}}\\\nonumber
&\leq&\frac{L_f}{2}+ \frac{1}{2}+\frac{\sigma_{g'}}{2}\leq \frac{t_{k}\tau_{k}}{16},
\end{eqnarray*}
\normalsize
where the first inequality is obtained using the fact that $\frac{2K-k}{\sqrt{K}}\geq1$ and the second inequality is because $\tau_k\geq \tau'\geq 8 (L_f+1)$. This verifies \eqref{eq:determine_cond2}. Lastly, \eqref{eq:determine_cond3_verified_muf0my0} still holds. So does \eqref{eq:determine_cond3}. Hence, \eqref{eq:choiceinQ1_stochastic} and \eqref{eq:choiceinQ2_stochastic} hold by Corollary~\ref{thm:mainprop_stochastic_simplified}. 

By the definitions of $t_k$, $\theta_k$, $\tau_k$, $\sigma_k$ and $\gamma_k$ again, we have \eqref{eq:sumtk_muf0},  \eqref{eq:sumtkthetaksigmak_muf0my0} and the following three inequalities
\begin{align}
\label{eq:sumtksquare_muf0my0}
\sum_{k=0}^{K-1}t_k^2=&K,\\
\label{eq:sumtksigmak_stochastic_muf0}
\sum_{k=0}^{K-1}\frac{t_k}{\sigma_k}\leq&\frac{K}{\left(\sqrt{(H_g^y)^2+\sigma_{g'}^2}/D_y\right)\sqrt{K}}= \frac{D_y\sqrt{K}}{\sqrt{(H_g^y)^2+\sigma_{g'}^2}},\\
\label{eq:sumtktauk_stochastic_muf0}
\sum_{k=0}^{K-1}\frac{t_k}{\tau_k}\leq&\frac{K}{\tau\sqrt{K}}\leq \frac{K^{\frac{1}{2}}}{\sqrt{(H_g^{x})^2(\|\vlambda^*\|_1+1)^2+(H_f)^2+\sigma_{g'}^2(\|\vlambda^*\|_1+1)+\sigma_{f'}^2}},\\
\label{eq:sumtkgammak_stochastic_muf0}
\sum_{k=0}^{K-1}\frac{t_k}{\gamma_k}\leq&\frac{K\sqrt{K}}{60m\sigma_{g'}K}=\frac{\sqrt{K}}{60m\sigma_{g'}}.
\end{align}
In addition, \eqref{eq:TKnonpositive_muf0my0} still holds, so $\mathcal{T}_{\|\vlambda^*\|_1+1,K}\leq 0$ almost surely. 
Conclusions \eqref{eq:objective_deterministic1_muf0my0_stoc} and \eqref{eq:constraint_deterministic1_muf0my0_stoc} are then obtained by applying \eqref{eq:sumtk_muf0}, \eqref{eq:sumtkthetaksigmak_muf0my0}, \eqref{eq:sumtksquare_muf0my0}, \eqref{eq:sumtksigmak_stochastic_muf0}, \eqref{eq:sumtktauk_stochastic_muf0} and \eqref{eq:sumtkgammak_stochastic_muf0}  as well as the definition of $t_0$, $\tau_0$, $\sigma_0$ and $\gamma_0$ to the right-hand sides of \eqref{eq:choiceinQ1_stochastic} and \eqref{eq:choiceinQ2_stochastic}. 
\qedsymbol{}
\end{proof}

Theorem~\ref{thm:maintheorem_deterministic_muf0my0_stoc} shows that, using stochastic oracles, Algorithm~\ref{alg:SPDHG} finds an $\epsilon$-optimal solution for \eqref{eq:SIP} in $O(\epsilon^{-2})$ iterations. Here, we only focus on the case where $\mu_f=0$ and $\mu_y=0$ because the convergence rate does not change when $\mu_f$ or $\mu_y$ or both are positive. This finding is consistent with the results in stochastic optimization with finitely many constraints~\cite{boob2022stochastic}. 

Moreover, it is easy to see that Theorem~\ref{thm:maintheorem_deterministic_muf0my0} is a special case of Theorem~\ref{thm:maintheorem_deterministic_muf0my0_stoc} when the problem is deterministic, i.e., when $\sigma_f=\sigma_{g'}=\sigma_{g}=0$. Therefore, when we further have $H_g^{x}=H_g^{y}=H_f=0$,  we can set $\tau=\sigma=0$ and, according to \eqref{eq:objective_deterministic1_muf0my0_stoc} and \eqref{eq:constraint_deterministic1_muf0my0_stoc}, the iteration complexity of Algorithm~\ref{alg:SPDHG} is reduced $O(\epsilon^{-1})$.

\section{Experimental Results}
\label{sec:experiments}
In this section, we evaluate the numerical performance of the proposed AGSIP method on an instance of \eqref{eq:SIP}. All experiments are conducted in Matlab on a computer with the CPU 2GHz Quad-Core Intel Core i5.

\subsection{$\boldsymbol{\mu_f>0,\ \mu_y>0}$}
\label{subsec:strongly_convex}
We consider the instance of \eqref{eq:SIP} adapted from \cite{Calafiore2005}. Let $m=4$, $p=10$ and $q=10$ in \eqref{eq:SIP}. This instance is formulated as follows
\begin{equation*}
    \begin{split}
        \min_{\|\bx\|_\infty\leq 2}&-\mathbf{1}^\top \vx + \frac{0.1}{2}\|\bx\|_2^2 \\
        \text{s.t.}\quad &(\va_i+0.2\vy^i)^\top \vx - b_i - \frac{1}{2}{\by^i}^\top \bQ \by^i \leq 0,~\forall \vy^i~\text{s.t.}~\|\vy^i\|_\infty\leq 1\text{ for }i=1,\dots,4,
    \end{split}
\end{equation*}
where
\begin{equation*}
    \begin{split}
        \begin{bmatrix}
            \va_1^\top \\
            \va_2^\top \\
            \va_3^\top \\
            \va_4^\top
        \end{bmatrix}
        =
        \begin{bmatrix}
            -1 & 0 & -1 & 0 & 0 & -1 & -1 & 0 & -1 & 0\\
            0 & -1 & 0 & -1 & -1 & 0 & 0 & -1 & 0 & -1\\
            1 & 0 & 1 & 0 & 0 & 1 & 1 & 0 & 1 & 0\\
            0 & 1 & 0 & 1 & 1 & 0 & 0 & 1 & 0 & 1
        \end{bmatrix}
        \ ,\  
        \begin{bmatrix}
            b_1 \\
            b_2 \\
            b_3 \\
            b_4
        \end{bmatrix}
        =
        \begin{bmatrix}
            0 \\
            0 \\
            1\\
            1
        \end{bmatrix},
    \end{split}
\end{equation*}
and $\bQ= 0.1(\bq^\top \bq +\bI)$ with $\bq$ a 10-by-10 matrix of uniformly distributed random numbers between $0$ and $1$ and $\bI$ a 10-by-10 identity matrix. In this instance, $f(\bx)$ is strongly-convex in $\bx$ and $g_i(\bx,\by^i)$ is strongly-concave in $\by^i$ for fixed $\bx$.

We compare the AGSIP method with three baselines: the SIP-CoM algorithm \cite{Wei2020TheCA}, the smoothing penalty method \cite{xu2014solving} and the exchange method \cite{lopez2007semi}. At iteration $k$ of the SIP-CoM algorithm, the solution $\by_k$ to the lower problem can be solved using Matlab solver \textit{quadprog}.
Then a projected gradient step with step size $\eta_k$ is performed to update $\bx_k$ using either gradient $\nabla f(\vx_k)$ or subgradient $\partial_{x} [\max_{i}g_i(\vx_k,\vy_k^i)]$ depending on whether $g(\vx_k,\vy_k)\leq \delta_k$ or not. Here, $\delta_k\geq0$ is a tolerance parameter. The smoothing penalty method first approximates the constraint $g_i^*(\bx)\leq 0$ by $\gamma_i(\bx)\leq 0$, where
$$
\gamma_i(\bx)=\rho^{-1}\ln\left(\int_{\cY^i}\exp(\rho g_i(\bx,\by^i))d\by^i\right), \quad i=1,\dots,m,
$$
and $\rho>0$ is a smoothing parameter. Then it solves the resulting constrained optimization by a penalty method, namely, solving
\begin{equation*}
    \begin{split}
        \min_{\bx\in\cX} \quad &G_{\rho}^{\lambda}(\vx):= f(\vx)+\frac{\lambda}{2}\sum_{i=1}^m\left( \sqrt{\gamma_{i}^2(\vx)+\rho^{-1}} + \gamma_{i}(\vx) \right),
    \end{split}
\end{equation*}
where $\lambda>0$ is a penalty parameter. The accelerated gradient method~\cite{nesterov2018lectures} with line search is applied to the subproblem above. Since computing  $\gamma_i$ and its gradient requires calculating an integral, a set of points of size $5,000$ is randomly sampled from a uniform distribution over $\cY^i$ to approximate the integral. The set is sampled only once at the beginning of the algorithm and used in all iterations. Once condition (3.17) or (3.18) in \cite{xu2014solving} is satisfied, the accelerated gradient method is restarted with $\rho$ or $\gamma$ in the penalty term increased by a factor. We refer readers to Algorithm 3.1 in \cite{xu2014solving} for more details. In the exchange method, a set of points of size $100$ is first randomly sampled from a uniform distribution over $\cY^i$ for $i=1,\dots,4$ and the set of points for each $i$ is denoted as $\cY^i_0$. Then at iteration $k$, a solution $\bx_k$ is solved for the following problem
\begin{equation*}
        \min_{\bx\in\cX}\ f(\bx) \quad \text{s.t.}\quad g_i(\bx,\by^i) \leq 0,~\forall \vy^i\in \cY^i_k\text{ for }i=1,\dots,4.
\end{equation*}
Then for the fixed $\bx_k$, the solution $\by^i_k$ of the lower problem can be computed with Matlab solver. Then, if $\max_i g_i(\bx_k,\by^i_k)\leq 0$, $\bx_k$ is the optimal solution; otherwise, $\cY^i_{k+1}=\cY^i_k \cup \by^i_k$.

In our AGSIP algorithm, based on Theorem~\ref{thm:maintheorem_deterministic}, we set $t_k=k+k_0+1$, $\theta_k=\frac{k+k_0}{k+k_0+1}$, and set the step sizes $\sigma_k =\tau_k =\frac{k+k_0+1}{k_0}$, $\gamma_k=\frac{k_0}{k+k_0+1}$ where $k_0$ is chosen from $\{10^2,10^3,10^4,10^5\}$ that gives the best performance. In the SIP-CoM method, the step size is set to $\eta_k = \frac{C}{k+1}$ where $C$ is tuned from $\{0.01,0.1,1\}$, and the tolerance $\delta_k$ is set to $2\times 10^{-3}$. The output of the SIP-CoM method is the weighted average of $\bx_k$ at feasible steps with weights $\frac{\eta_k}{1-\eta_k}$.
In the smoothing penalty method, the initial values of $\rho$ and $\lambda$ are set to $10$ and $5$, respectively, and they will be increased by factors $1.1$ and $5$, respectively, after each restart.

For the four methods in comparison, we report the objective value $f(\bx_k)$ and the amount of constraint violation, i.e.,  $\max_{i=1,\dots,m}g_i^*(\bx_k)$, during the algorithms in Figures~\ref{convergence_time_strconvex} and~\ref{convergence_iter_strconvex}. The horizontal line represents CPU times (in seconds)  in Figure~\ref{convergence_time_strconvex}  and  the number of iterations in Figure~\ref{convergence_iter_strconvex}. For the smoothing penalty method, the number of iterations is the total number of iterations in the accelerated gradient algorithm performed across all restarts, including the ones performed during the line search. According to Figure~\ref{convergence_time_strconvex}, the smoothing penalty method require much longer CPU time than other three methods. This is because the smoothing penalty method needs a large sample of $\by^i$ to approximate the integral in $\gamma_i$ and the cost of computing the gradient of $G_{\rho}^{\lambda}(\vx)$ increases with the sample size. On the contrary, our method only needs to compute the gradients of $f$ and $g_i$ at one point per iteration. Both of the SIP-CoM algorithm and the exchange method require a solver to solve the lower-level problem, which is not always available. If the lower-level problem cannot be solved using a solver, the SIP-CoM algorithm requires a large sample of $\by^i$ per iteration to approximately solve the lower-level problem and the exchange method cannot be applied. 



\begin{figure}
\begin{center}
\includegraphics[width=0.49\linewidth]{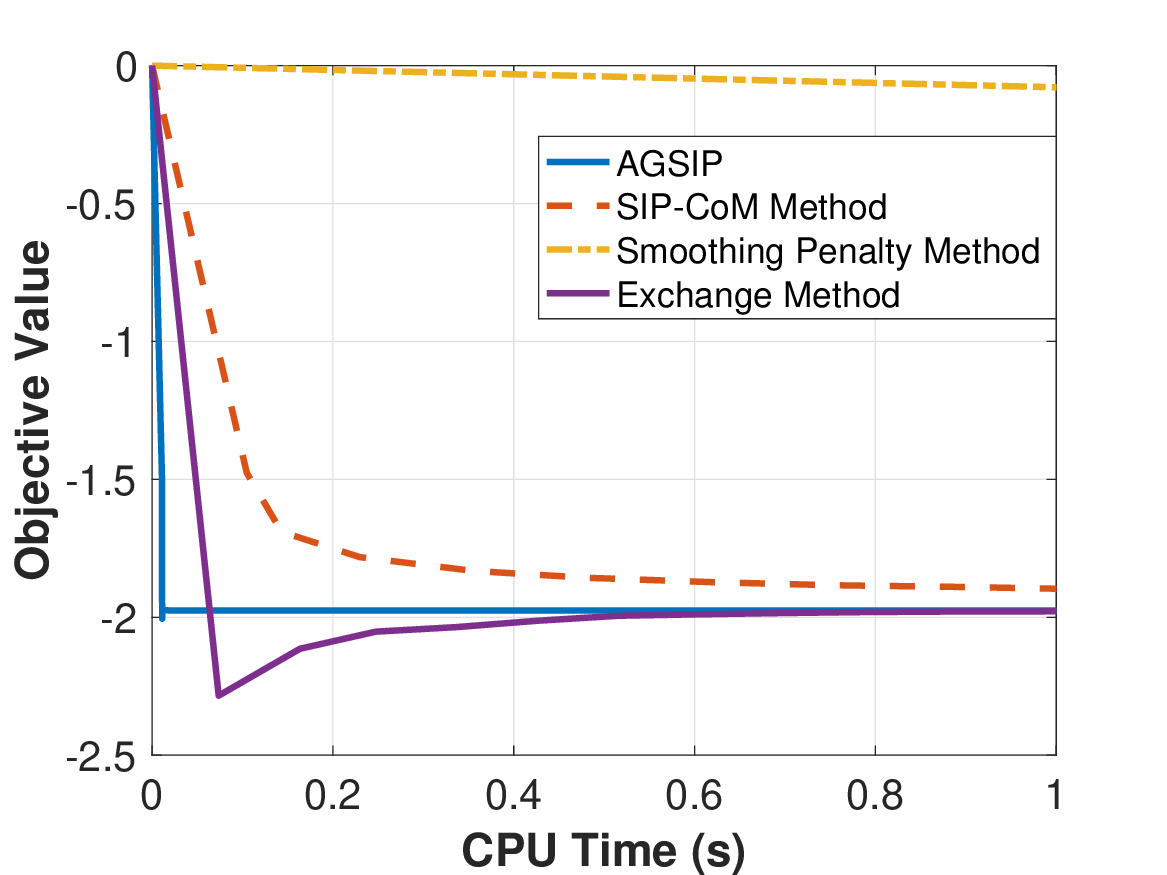}
\includegraphics[width=0.49\linewidth]{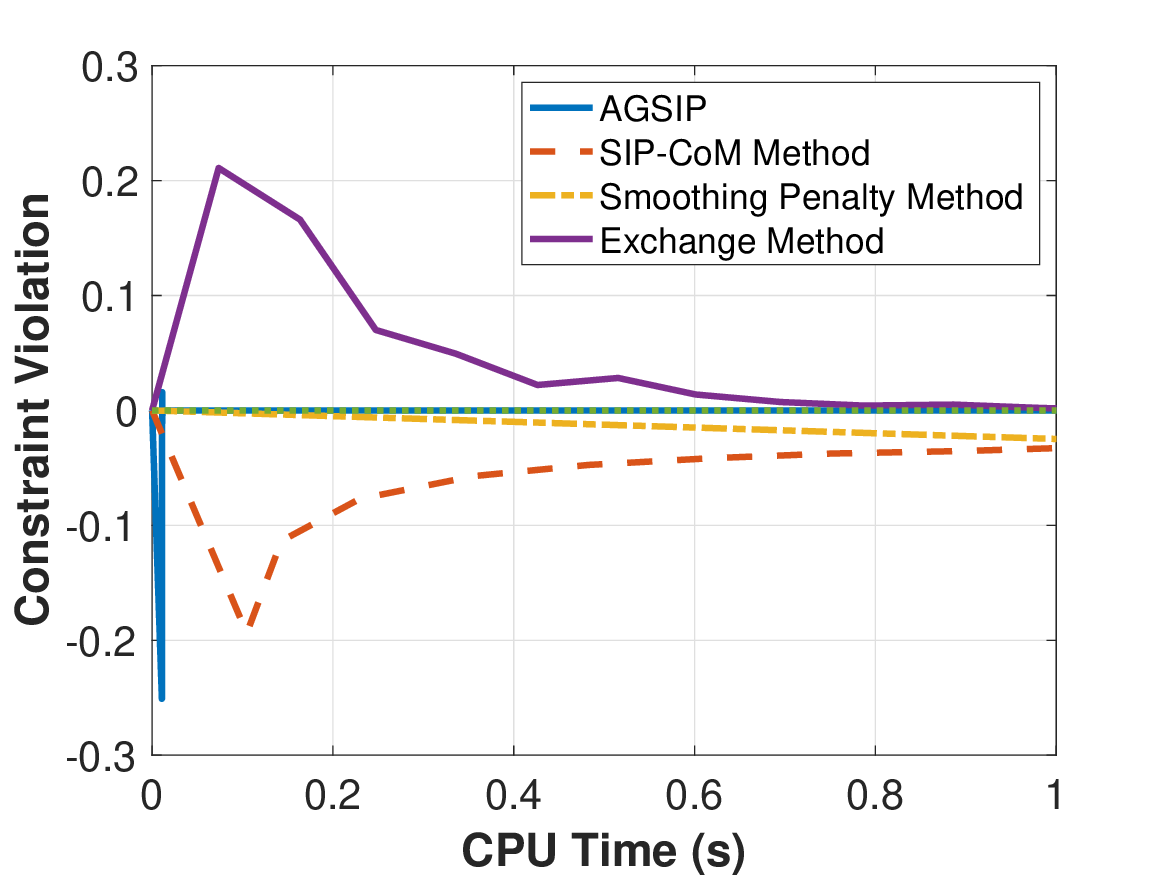}
\caption{Convergence of objective values and constraint violations with CPU time.}
\label{convergence_time_strconvex}
\end{center}
\end{figure}

\begin{figure}
\begin{center}
\includegraphics[width=0.49\linewidth]{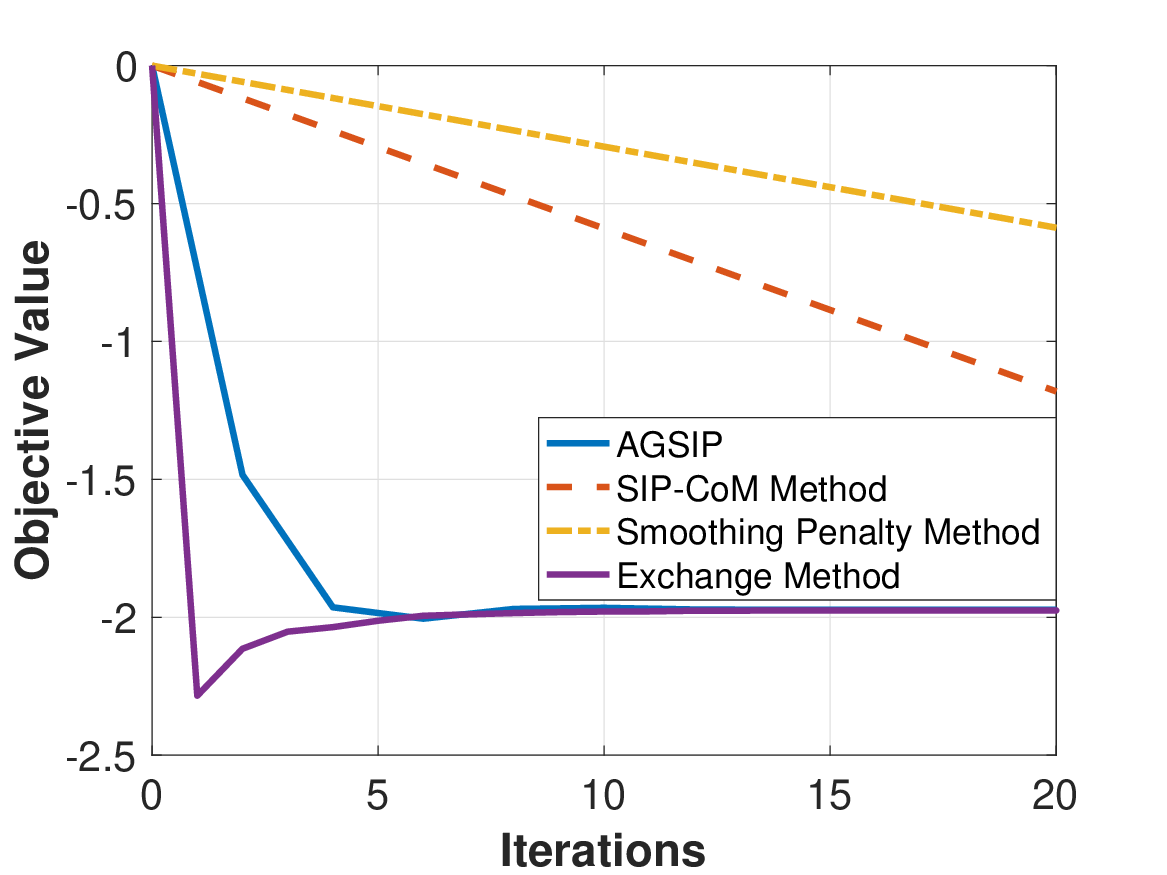}
\includegraphics[width=0.49\linewidth]{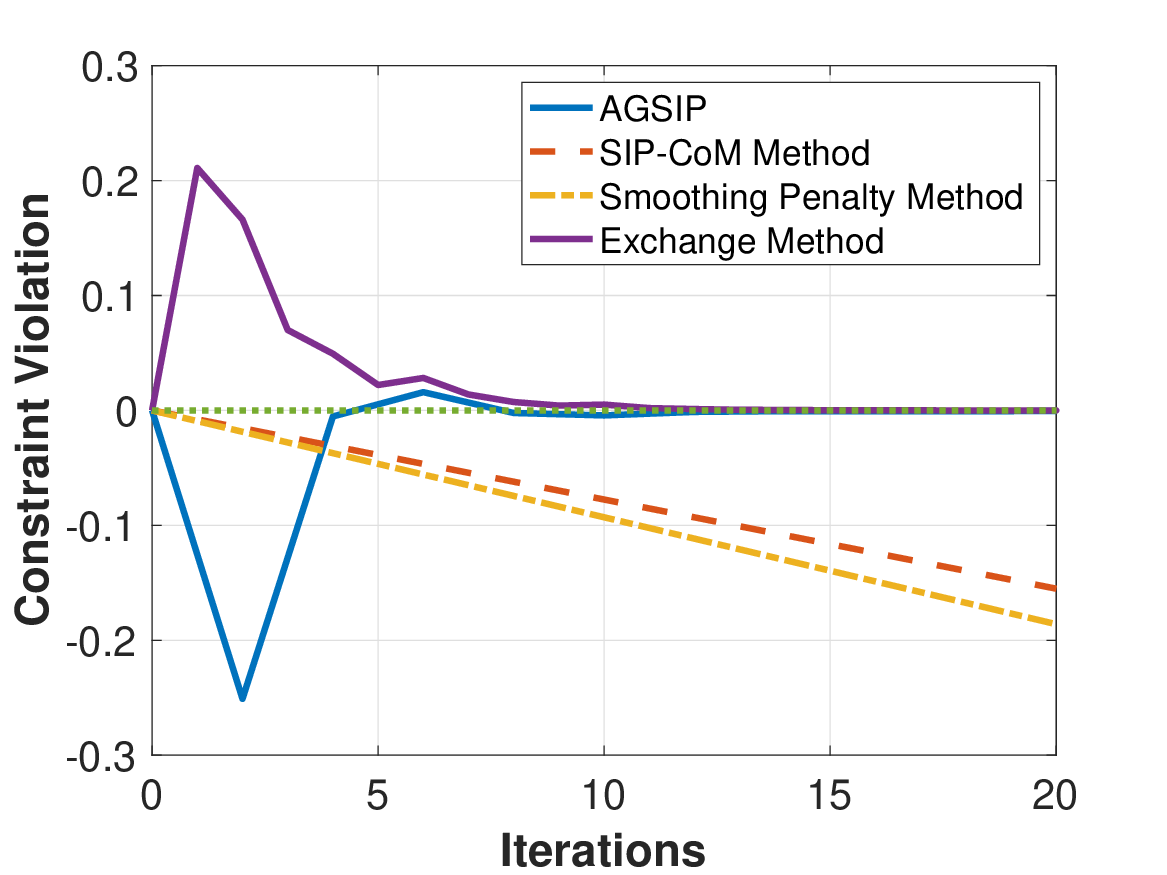}
\caption{Convergence objective values and constraint violations with number of iterations.}
\label{convergence_iter_strconvex}
\end{center}
\end{figure}

\subsection{$\boldsymbol{\mu_f=0,\ \mu_y=0}$}
\label{subsec:convex}

We consider the instance of \eqref{eq:SIP} adapted from \cite{Calafiore2005}. Let $m=4$, $p=10$ and  $q=10$ in \eqref{eq:SIP}. This instance is formulated as follows
\begin{equation*}
    \begin{split}
        \min_{\|\bx\|_\infty\leq 2}&-\mathbf{1}^\top \vx\quad
        \text{s.t.}\quad (\va_i+0.2\vy^i)^\top \vx - b_i\leq 0,~\forall \vy^i~\text{s.t.}~\|\vy^i\|_2\leq 1\text{ for }i=1,\dots,4,
    \end{split}
\end{equation*}
where $\va_i$ and $b_i$ are the same as in~\ref{subsec:strongly_convex}.

We compare the AGSIP method with three baselines the same as in~\ref{subsec:strongly_convex}. In the SIP-CoM algorithm and the exchange method, the lower-level problem have closed-form solutions. In our AGSIP algorithm, according to~\ref{thm:maintheorem_deterministic_muf0my0}, we fix the $\theta_k=t_k=1$, and set the step sizes $\sigma_k = \sigma$, $\gamma_k=\gamma$ and $\tau_k =\tau$ where $\sigma,\ \gamma,\ \tau$ are chosen from $\{0.01, 0.1, 1\}$ that gives the best performance. In the SIP-CoM method, the step size is set to $\eta_k=\frac{C}{\sqrt{k+1}}$ where $C$ is tuned from $\{0.01, 0.1, 1\}$, and the tolerance $\delta_k$ is set to $\frac{\delta}{\sqrt{k+1}}$ with $\delta$ chosen from $\{0.01, 0.1, 0.5\}$. In the smoothing penalty method, a set of points of size 50,000 is randomly sampled from a uniform distribution over $\mathcal{Y}^i$ to approximate the integral. The initial values of $\rho$ and $\lambda$ are set to $10$ and $5$, respectively, and they will be increased by factors $1.1$ and $5$, respectively, after each restart.

For the four methods in comparison, we report the objective value $f(\bx_k)$ and the amount of constraint violation, i.e.,  $\max_{i=1,\dots,m}g_i^*(\bx_k)$, during the algorithms in Figures~\ref{convergence_time} and~\ref{convergence_iter}. The horizontal line represents CPU times (in seconds) in Figure~\ref{convergence_time} and the number of iterations in Figure~\ref{convergence_iter}. For the smoothing penalty method, the number of iterations is the total number of iterations in the accelerated gradient algorithm performed across all restarts, including the ones performed during the line search. 
According to Figure~\ref{convergence_time}, our method AGSIP converges faster than the three baselines, but the advantage of our method is not significant when compared with the SIP-CoM method and the exchange method. The reason for that is the simple lower problem, which results in closed-form solutions for the SIP-CoM method and the exchange method. However, difficult lower-level problem without closed-form solution and cannot be solved with a solver are common in practice. In such cases, the SIP-CoM algorithm requires a large sample of $\by^i$ per iteration to approximately solve the lower-level problem and the exchange method cannot be applied. 



\begin{figure}
\begin{center}
\includegraphics[width=0.49\linewidth]{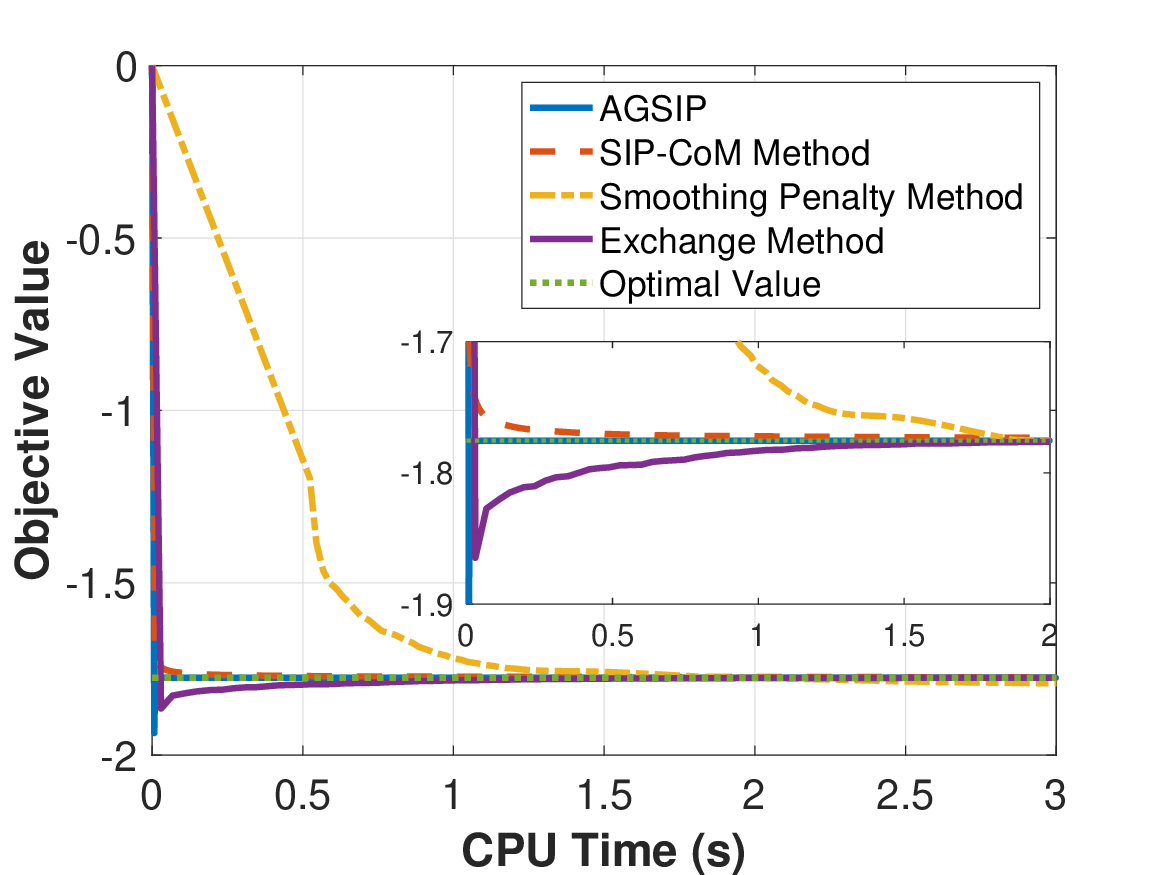}
\includegraphics[width=0.49\linewidth]{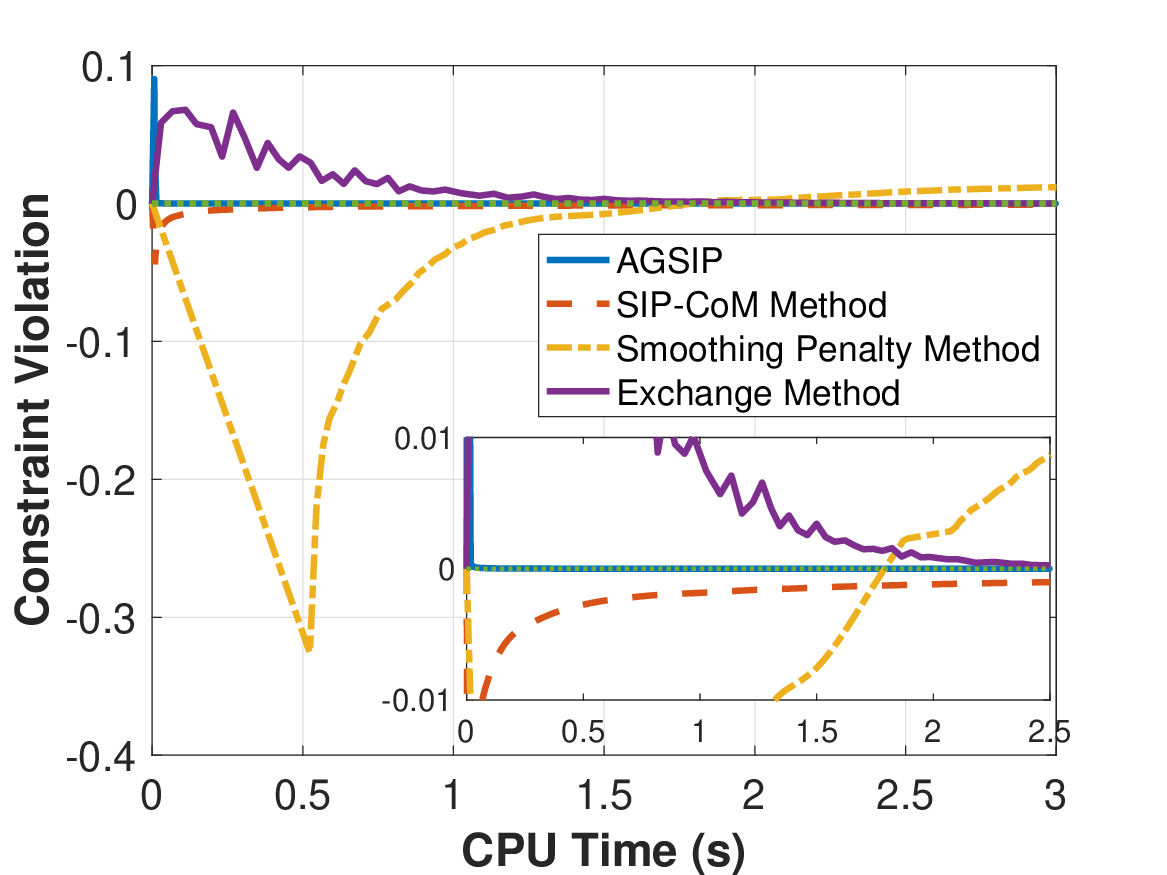}
\caption{Convergence of objective values and constraint violations with CPU time.}
\label{convergence_time}
\end{center}
\end{figure}

\begin{figure}
\begin{center}
\includegraphics[width=0.49\linewidth]{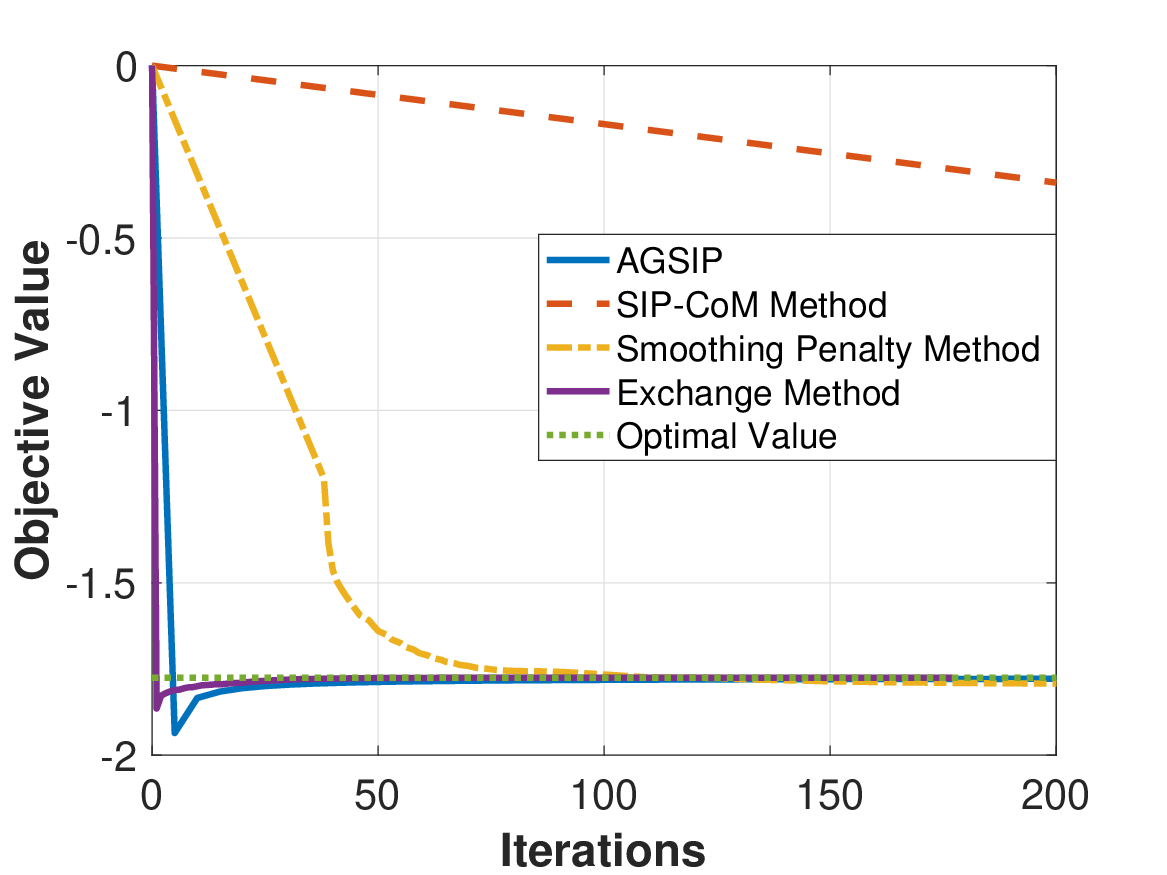}
\includegraphics[width=0.49\linewidth]{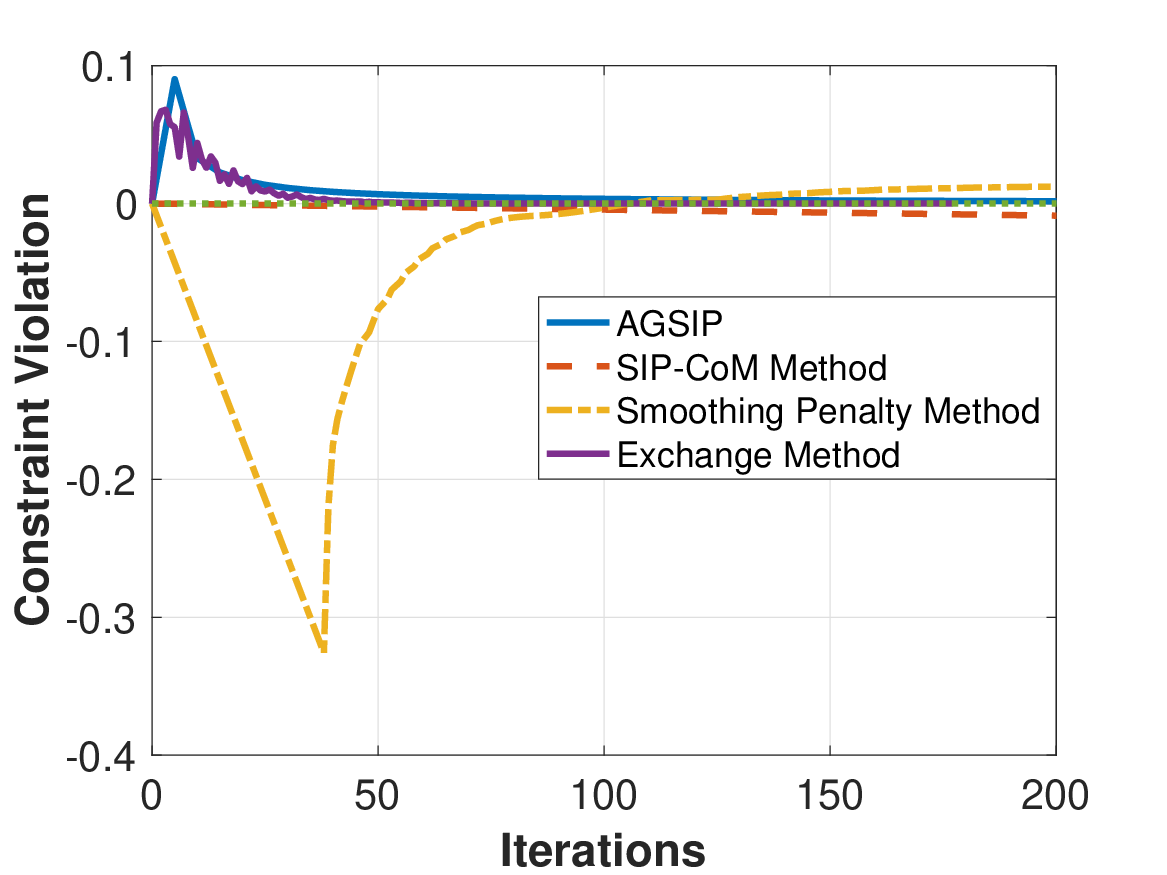}
\caption{Convergence objective values and constraint violations with number of iterations.}
\label{convergence_iter}
\end{center}
\end{figure}





\section{Conclusions}
\label{sec:conclusions}

Semi-infinite programming  has applications in transportation, robotics, statistics and machine learning but it is challenging to solved due to the infinite number of constraints. Under the assumption of convexity, an effective primal-dual gradient method is proposed where, in each iteration, momentum-driven gradient ascend and descend steps are performed. This approach is also extended using stochastic gradient and function oracles, allowing it to solve large-scale data-driven problems through data sampling. The iteration complexity of the proposed method for finding an $\epsilon$-optimal solution is establish under different strong convexity assumptions and smoothness assumptions, encompassing both deterministic and stochastic cases.

   \bibliographystyle{plain}
   \bibliography{ref, SIP}

\end{document}